\definecolor{pink}{HTML}{FF1493}
\def\RA#1{\textcolor[HTML]{844D92}{#1}}
\theoremstyle{definition}
\theoremstyle{remark}
\theoremstyle{theorem}
\newtheorem{thm}{Theorem}
\numberwithin{thm}{section} % numbers theorems by section
\newtheorem{cor}[thm]{Corollary}
\newtheorem{lemma}[thm]{Lemma}
\newtheorem{prop}[thm]{Proposition}
\theoremstyle{definition}
\newtheorem{defn}[thm]{Definition}
\theoremstyle{definition}
\newtheorem{example}[thm]{Example}
\theoremstyle{plain}
\newtheorem*{rmk}{Remark}
\theoremstyle{definition}
\newtheorem{OProb}{Open Problem}
\numberwithin{OProb}{section} % numbers theorems by section
\def\N{{\mathbb{N}}}
\def\Z{{\mathbb{Z}}}
\def\H{{\mathbb{H}}}
\def\doug{{\partial}}
\def\m{{\mathfrak{m}}}
\def\k{{\Bbbk}}
\def\ab{\Z\text{-}\mathrm{mod}}
\def\im{\mathrm{im}}
\def\crco{critical and cocritical degrees }
\def\Rmod{$\mathcal{R}\text{-}\mathrm{mod}\!$ }
\def\Ab{$\mathscr{A}b$}
\def\CS{\mathcal{S}}
\def\Ck{\mathcal{S}_{\k}}
\def\ZZ{\mathcal{Z^*}}
\def\CK{\mathscr{K}}
\def\x{\mathfrak{X}}
\def\CR{\mathscr{C}(R)\! }
\def\CQ{\mathscr{C}(Q)\! }
\def\compK{\mathcal{K}}
\def\bfu{\textbf{u}}
\newcommand{\stMCM}[1]{\operatorname{\underline{\mathrm{MCM}}}(#1)}
\newcommand{\Ktac}[1]{\operatorname{{\mathscr{K}}_{tac}}(#1)}
\newcommand{\comp}[1]{\mathcal{#1}}
\newcommand{\map}[1]{\mathit{#1}}
\newcommand{\mincomp}[1]{\overline{\mathcal{#1}}}
\newcommand{\shift}[1]{{\Sigma{#1}}}
\newcommand{\shiftq}[2]{{\Sigma^{#1}{#2}}}
\newcommand{\crdeg}[3]{\mathrm{crdeg}^{#1}_{{#2}}{#3}}
\newcommand{\cocrdeg}[3]{\mathrm{cocrdeg}^{#1}_{{#2}}{#3}}
\newcommand{\minmod}[1]{\overline{#1}}
\newcommand{\minmap}[1]{\bar{#1}}
\newcommand\mydots{\hbox to 1em{$\cdot\hss\cdot\hss\cdot$}}
\newcommand{\res}[1]{\mathbb{#1}}
\newcommand{\rk}[1]{\mathrm{rk}(#1)}
\newcommand{\tri}[7]{{{#1}}\xrightarrow{#2}{{#3}}\xrightarrow{#4}{\comp{#5}}\xrightarrow{#6}{\shift{\comp{#7}}}}
\newcommand{\Syz}[2]{\Omega^{#1}{#2}}
\newcommand{\Hom}{\operatorname{Hom}\nolimits}
\newcommand{\HomC}[2]{\operatorname{Hom(\comp{#1},\comp{#2})}}
\renewcommand{\Im}{\operatorname{Im}\nolimits}
\newcommand{\Cone}[1]{{\operatornamewithlimits{\mathcal{M}}\map{(#1)}}}
\newcommand{\coker}{\operatorname{coker}\nolimits}
\newcommand{\pd}{\operatorname{pd}\nolimits}
\newcommand{\id}{\operatorname{id}\nolimits}
\newcommand{\codim}{\operatorname{codim}\nolimits}
\newcommand{\depth}{\operatorname{depth}\nolimits}
\newcommand{\Tor}{\operatorname{Tor}\nolimits}
\newcommand{\Ext}{\operatorname{Ext}\nolimits}
\newcommand{\Tateext}{\operatorname{\widehat{Ext}}\nolimits}
\newcommand{\Id}{\operatorname{Id}\nolimits}
\renewcommand{\H}{\operatorname{H}\nolimits}
\newcommand{\cx}{\operatorname{cx}\nolimits}
\newcommand{\End}{\operatorname{End}\nolimits}
\newcommand{\comments}[1]{}
\newcommand{\f}{{\mathbf{f}}}
\newcommand{\homeq}{\simeq}
\newcommand{\Csum}[2]{ \comp{#1} \oplus  \comp{#2}}
\newcommand{\dual}[1]{\comp{#1}^*}
\newcommand{\der}[1]{\mathcal{D}(#1)}
\newcommand{\abs}[1]{\mid #1 \mid}
\newcommand{\betti}[1]{\{b_n^R({#1})\}}
\newcommand{\soc}{\operatorname{Soc}\nolimits}
\newcommand{\cosoc}{\operatorname{Cosoc}\nolimits}
\newcommand{\cdepth}{\operatorname{codepth}\nolimits}
\newcommand{\cdiam}[2]{\operatorname{diam}_{#1}(\comp{#2})}
\newcommand{\mdiam}[2]{\operatorname{diam}_{#1}({#2})}
\newcommand{\cres}[3]{\comp{#1}\to\res{#2}\dhrightarrow{#3}}
\newcommand{\gExt}[3]{\mathrm{Ext}_{#1}^{\ast}({#2},{#3})}
\newcommand{\gTateext}[3]{\Tateext_{#1}^{\ast}({#2},{#3})}
\newlist{arrowlist}{itemize}{1}
\setlist[arrowlist]{label=$\rightsquigarrow$}
\newlist{Arrowlist}{itemize}{1}
\setlist[Arrowlist]{label=$\Rrightarrow$}
\newlist{dlist}{itemize}{1}
\setlist[dlist]{label=$\diamond$}
\newcommand\dhrightarrow{%
	\mathrel{\ooalign{$\rightarrow$\cr%
			$\mkern3.5mu\rightarrow$}}}
\title[The Critical and Cocritical Degrees of a Totally Acyclic Complex]{The Critical and Cocritical Degrees of a Totally Acyclic Complex over a Complete Intersection}
\author[R.\ Aduddell]{Rebekah J.\ Aduddell}
\address{}
\email{rebekah.aduddell@mavs.uta.edu}
\date{\today}
\begin{document}
	\begin{abstract}
			It is widely known that the minimal free resolution of a module over a complete intersection ring has nice patterns eventually arising in its Betti sequence. Avramov, Gasharov, and Peeva defined the notion of \emph{critical degree} for a finitely generated module $M$ over a local ring $(Q,\m,\k)$ in \cite{cid}, proving that this degree is finite whenever $M$ has finite $\text{CI-}$dimension. This paper extends the notion of critical degree via a complete resolution of $M$ over a complete intersection ring of the form $R=Q/(f_1,\dots,f_c)$, thus defining the critical and cocritical degrees of an object $\comp{C}$ in the category of totally acyclic complexes, $\Ktac{R}$. In particular, providing the appropriate dual analogue to critical degree enables us to introduce a new measure for $R$-complexes, called the \emph{critical diameter}.
	\end{abstract}
\maketitle

\section{Introduction}

Interest in the growth of Betti numbers of finitely generated modules over particular classes of commutative rings has been widespread and longstanding. Free resolutions over regular, local rings are finite ($\!\!$\cite{buch-aus}), yet when we take such a ring and quotient out by a regular sequence we often find infinite free resolutions. Let $(Q,\m,\k)$ be a commutative noetherian, local ring with $\dim Q=\dim_{\k}\m/\m^2$ %(\RA{ensures $Q$ is also regular-- krull dimension equals minimal number of generators of max ideal.}) 
and take a complete intersection of the form $R=Q/(f_1,\dots,f_c)$ where $f_1,\dots, f_c$ is a $Q$-regular sequence in $\m$. For this paper, our focus is on growth in the Betti sequence of a finitely generated $R$-module $M$.

Minimal free resolutions of $M$ exhibit realizable patterns in their syzygy sequence. In 1954, Tate showed that $b_n^R(\k)$ is eventually given by a polynomial ($\!\!$\cite{tate}) and, subsequently in 1974, Gulliksen proved that each $b_n^R(M)$ is a quasi-polynomial of period 2 with degree smaller than the codimension ($\!\!$\cite{gu}). Building off of Gullisken's work, Eisenbud demonstrated in 1980 that whenever $R$ is a hypersurface, the free resolution of $M$ is periodic of period $2$ ($\!\!$\cite{Eisenbud}). Later, Avramov demonstrated that $b_n^R(\k)$ has exponential growth \emph{unless} $R$ is a complete intersection ($\!\!$\cite[1.8]{Avramov}). Finally, in 1997, Avramov, Gasharov, and Peeva
demonstrated that, although the beginning of a free resolution over a complete intersection is often unstable, patterns do emerge at infinity. In particular, they  proved that $\{b_n^R(M)\}$ is eventually either strictly increasing or constant. Of special significance is their generalization of modules over a complete intersection to modules of finite \textit{CI-dimension}, for which the same statement holds (see \cite{cid}).

Both complexity and the Hilbert-Poincar\'{e} series help us better measure the asymptotic stability of infinite free resolutions, but an interesting perspective taken by the authors of \cite[\S 7]{cid} was to define the notion \emph{critical degree} of an $R$-module, which essentially represents a flag for when asymptotically stable patterns are guaranteed to arise.  While the periodicity of a free resolution over a hypersurface is guaranteed to start right away ($\!\!$\cite{Eisenbud}), this does not necessarily occur when $\codim(R,Q)\geq2$.

Stability in these resolutions is intimately connected with maximal Cohen-Macaulay (MCM) modules and Tate cohomology, as demonstrated by Buchweitz in 1986 within his article \cite{mcm-tate}. We ask the question: when the syzygy sequence stabilizes to MCM modules, does this necessarily imply that the asymptotic patterns begin? Eisenbud demonstrated that for a hypersurface, the pattern of periodicity arises immediately. %\RA{(CHECK/cite)}. 
However, for $\cx_RM\geq 2$, this does not necessarily happen; for example, one could consider a negative syzygy module in the complete resolution of an MCM module. Thus, there is a distinction between the ``head'' of a free resolution carved out by the complete resolution defined by Buchweitz and \emph{when} asymptotic patterns are guaranteed to arise in the syzygy sequence.

It is for this reason that this paper takes the view that there is useful data one could recover about $M$ in the case where $\cx_RM\geq 2$ if we consider \emph{critical degree} with regard to the complete picture of the syzygy sequence.  Thus, our goal is to first extend this notion to the triangulated category of totally acyclic $R$-complexes (which is equivalent to the stable category of MCM modules) and, in doing so, it becomes necessary to present a dual analogue to the notion. In Section \ref{sec:Prelim}, we begin with collecting some preliminary data needed to proceed, and then we complete our primary goal of extending \cite[\emph{Definition 7.1}]{cid} in Section \ref{sec:crcodeg}. 

Afterward, we look towards understanding the natural connection of these definitions to Tate cohomology in Section \ref{sec:cohomchar} and present an appropriate analogue to \cite[\emph{Proposition 7.2}]{cid}. Finally, one pitfall of \emph{critical degree}, as discussed in \cite[\S 7]{cid}, is that for $n>0$, $\Syz{-n}{M}$ will always have a greater critical degree than $M$ despite these modules having the same complexity. It is our hope that by shifting perspective to the \emph{complete} syzygy sequence, we might be able to recover some boundedness properties of (necessarily indecomposable) modules over a fixed ring and given complexity. We present a new invariant of totally acyclic $R$-complexes (and $R$-modules) called \emph{critical diameter} and provide a simple example in Section \ref{sec:diam}.
%\subsection*{Acknowledgments}

\section{Preliminaries}\label{sec:Prelim}
%\noindent \RA{Discuss basic definitions (complexes, triangulated cat basic relevent info, complete resolutions, cohom operators, Ext module, complete betti sequence, tate Ext; coregular elements, cosocle.)} \\
Let $(Q,\m,\k)$ be a regular, local ring and $R$ a complete intersection of the form $Q/\f$ where $\f=(f_1,\dots,f_c)$ is a regular $Q$-sequence in $\m$. Recall that an $R$-complex 
%For a regular, local ring $(Q,\m,\k)$, recall that a $Q$-complex
$\comp{C}$ is \emph{totally acyclic} if $\H(\comp{C})=0=\H(\comp{C}^*)$ where $\comp{C}^*=\Hom_R(\comp{C},R)$ and each $C_i$ is a finitely generated free\footnote{The definition requires each $C_i$ to be projective, but in our case they will additionally be free.} $R$-module. %Note that whenever $\comp{C}\homeq\comp{D}$, it also holds that $\dual{C}\homeq\dual{D}$. 
We say that a complex $(\comp{C}, \doug)$ is \emph{minimal} if every homotopy equivalence $\map{e}\!:\comp{C}\to\comp{C}$ is an isomorphism ($\!\!$\cite{rel}), noting this is equivalent to the requirement that $\doug(\comp{C})\subseteq\m\comp{C}$. On the other hand, $\comp{C}$ is \emph{contractible}, or (homotopically) \emph{trivial}, if the identity morphism $\map{1}^{\comp{C}}$ is null-homotopic %. %($\map{1}^{\comp{C}}\sim 0^{\comp{C}}$). We typically call a homotopy between $\map{1}^{\comp{C}}$ and $\map{0}$ a \emph{contraction} and, by definition, if $\comp{C}$ is contractible, then it is 
and thus homotopy equivalent to the zero complex. There exists a decomposition of all complexes, $\comp{C}=\mincomp{C} \oplus \comp{T}$, where $\mincomp{C}$ is a unique (up to isomorphism\footnote{If two minimal complexes are homotopy equivalent, then they are isomorphic (c.f. Proposition 1.7.2 in \cite{rel}).}) minimal subcomplex of $\comp{C}$ and $\comp{T}$ is contractible. It further holds that if two complexes are homotopy equivalent, then their minimal subcomplexes must be isomorphic. Throughout this paper, we denote by $\mincomp{C}$ the minimal subcomplex of $\comp{C}$ and it is understood that $\comp{C}\simeq\mincomp{C}$.\footnote{	It is straightforward to check that the natural projection $\pi:\!\comp{C}\to\mincomp{C}$ is a homotopy equivalence.} Note further that a homotopy equivalence between $\comp{C}$ and $\comp{D}$ induces a homotopy equivalence between  $\dual{C}$ and $\dual{D}$. Lastly, a chain map between two complexes always induces a map between any two respective homotopy equivalent complexes, thereby inducing a map on the minimal subcomplexes, as described by the lemma and proposition below.
\begin{lemma}\label{Lem:induced-map}
	Let $\comp{C}$, $\comp{D}$, $\comp{C'}$ and $\comp{D'}$ be $Q$-complexes for which there exist chain maps $\map{f}\!:\comp{C}\to\comp{D}$ and $\gamma\!:\comp{D}\to\comp{D'}$ and further suppose $\comp{C}\simeq\comp{C'}$. Then there exists an induced chain map $\map{f'}\!:\comp{C}' \to\comp{D}'$ such that %$\gamma\map{f} \sim \map{f'}\varphi$.
	the square
	\begin{center}
		$\xymatrix{ \comp{C} \ar[d]_{\map{f}}   \ar[r]^{\varphi} & \comp{C}' \ar@{.>}[d]_{\map{f'}} \\ \comp{D}  \ar[r]^{\gamma} & \comp{D}' }$ 
	\end{center}
	commutes (up to homotopy) and this map is unique (up to homotopy).
\end{lemma}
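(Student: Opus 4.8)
The plan is to construct $\map{f'}$ by conjugating $\gamma\circ\map{f}$ through a chosen homotopy inverse of $\varphi$, then verify the commuting square and uniqueness statements hold at the level of homotopy classes. First, since $\comp{C}\simeq\comp{C'}$, fix a homotopy equivalence: by hypothesis we are given $\varphi\!:\comp{C}\to\comp{C'}$, so let $\psi\!:\comp{C'}\to\comp{C}$ be a chain map with $\psi\varphi\simeq \map{1}^{\comp{C}}$ and $\varphi\psi\simeq \map{1}^{\comp{C'}}$. (If the intended reading is that $\varphi$ is merely a chosen representative of the equivalence, the same argument goes through; one just needs \emph{some} homotopy equivalence, and composing with it on the correct side is what matters.) Define $\map{f'} := \gamma\circ\map{f}\circ\psi\!:\comp{C'}\to\comp{D'}$. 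This is a composite of chain maps, hence a chain map.

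Next I would check that the square commutes up to homotopy. We compute $\map{f'}\circ\varphi = \gamma\circ\map{f}\circ\psi\circ\varphi \simeq \gamma\circ\map{f}\circ\map{1}^{\comp{C}} = \gamma\circ\map{f}$, where the homotopy $\psi\varphi\simeq\map{1}^{\comp{C}}$ is transported across $\gamma\circ\map{f}$ (pre/post-composing a null-homotopy with a chain map again yields a null-homotopy, since the chain maps commute with the differentials). This is exactly the assertion that the square commutes up to homotopy. For uniqueness, suppose $\map{g'}\!:\comp{C'}\to\comp{D'}$ is another chain map with $\map{g'}\circ\varphi\simeq\gamma\circ\map{f}$. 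Then $\map{g'} \simeq \map{g'}\circ\varphi\circ\psi \simeq (\gamma\circ\map{f})\circ\psi = \map{f'}$, using $\varphi\psi\simeq\map{1}^{\comp{C'}}$ on the first step; hence $\map{g'}\simeq\map{f'}$, which is uniqueness up to homotopy.

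I do not expect a serious obstacle here; this is essentially the statement that a homotopy equivalence is an isomorphism in the homotopy category $\homo{Q}$, so that $\varphi$ is invertible there and $\map{f'}$ is forced to be $\gamma\map{f}\varphi^{-1}$ (read in $\homo{Q}$). The only point requiring a little care is the bookkeeping that a homotopy $s$ with $\partial s + s\partial = \psi\varphi - \map{1}$ pushes forward to a homotopy witnessing $\gamma\map{f}\psi\varphi\simeq\gamma\map{f}$, namely $\gamma\map{f}s$ works because $\gamma$ and $\map{f}$ are chain maps; this is routine. If one instead prefers an argument entirely internal to chain complexes without invoking $\homo{Q}$, the three displayed computations above suffice and no homotopy category language is needed.
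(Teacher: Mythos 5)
Your proof is correct and takes essentially the same approach as the paper: the paper simply states that choosing $\map{f'}=\gamma\map{f}\varphi^{-1}$ (i.e., composing with a homotopy inverse of $\varphi$) makes the square commute up to homotopy and gives uniqueness, leaving the verification as routine, while you spell out those verifications explicitly.
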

\begin{proof}
	%Note that since $\varphi\!:\comp{C}\to\comp{C}'$ is a homotopy equivalence, there exists a map $\varphi^{-1}: \comp{C'}\to \comp{C}$ such that $\varphi\varphi^{-1} \sim \Id^{\comp{C'}}$ and $\varphi^{-1}\varphi\sim \Id^{\comp{C}}$. Thus, define $\map{f'}=\gamma\map{f}\varphi^{-1}$ and note that $\varphi^{-1}\varphi - \Id^{\comp{C}}\sim\comp{0}$ implies existence of some $k_n\!\!:C_n\to C_{n+1}$ such that $\varphi^{-1}_n\varphi_n - \Id^{\comp{C}}_n=k_{n-1}\doug^{\comp{C}}_n + \doug^{\comp{C}}_{n+1}k_n$ for each $n\in\Z$. It is then a simple calculation to check that $f'\varphi-\gamma f = (\gamma f k)\doug^C+\doug^{D'}(\gamma f k)$, as needed. To show uniqueness, suppose $\map{g}\!:\comp{C'}\to\comp{D'}$ such that $\map{g}\varphi\sim\gamma\map{f}$ and so there exist maps $l_n\!:C_n\to D'_{n+1}$ where $\gamma_n f_n-g_n\varphi_n= l_{n-1}\doug^{\comp{C}}_n + \doug^{\comp{D'}}_{n+1}l_n$ for each $n\in\Z$.  Another straightforward calculation in turn implies $f'-g=(l\varphi^{-1}+g k')\doug^{C'}+\doug^{D'}(l\varphi^{-1}+g k')$ demonstrating that $f'\sim g$, as desired. \RA{Shorten proof to: 
	It is straightforward to check that the choice of $\map{f'}=\gamma\map{f}\varphi^{-1}$ makes the square commute up to homotopy and that uniqueness of this map follows.%}
\end{proof}
\begin{prop}\label{Prop:min-induced-map}
	Let $\comp{C}$ and $\comp{D}$ be $R$-complexes with chain map $\map{f}\!:\comp{C}\to\comp{D}$. If $\mincomp{C}$ and $\mincomp{D}$ are the respective minimal subcomplexes, then there exists an induced chain map $\minmap{f}\!:\mincomp{C}\to\mincomp{D}$, which is unique (up to homotopy).
\end{prop}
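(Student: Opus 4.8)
The plan is to deduce the proposition from Lemma~\ref{Lem:induced-map}, taking as the horizontal homotopy equivalences the canonical projections of a complex onto its minimal subcomplex. Fix decompositions $\comp{C}=\mincomp{C}\oplus\comp{T}$ and $\comp{D}=\mincomp{D}\oplus\comp{T'}$ with $\comp{T}$, $\comp{T'}$ contractible, and write $\pi_{\comp{C}}\!:\comp{C}\to\mincomp{C}$, $\pi_{\comp{D}}\!:\comp{D}\to\mincomp{D}$ for the projections and $\iota_{\comp{C}}\!:\mincomp{C}\to\comp{C}$, $\iota_{\comp{D}}\!:\mincomp{D}\to\comp{D}$ for the inclusions.

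First I would record the routine fact that $\pi_{\comp{C}}$ is a homotopy equivalence with homotopy inverse $\iota_{\comp{C}}$: indeed $\pi_{\comp{C}}\iota_{\comp{C}}=\map{1}^{\mincomp{C}}$ on the nose, while $\iota_{\comp{C}}\pi_{\comp{C}}$ differs from $\map{1}^{\comp{C}}$ by the idempotent projecting onto $\comp{T}$, which is null-homotopic since $\map{1}^{\comp{T}}$ is; the same holds for $\comp{D}$. Then I would invoke Lemma~\ref{Lem:induced-map} (its argument being purely formal, it applies equally over $R$) with $\comp{C'}=\mincomp{C}$, $\comp{D'}=\mincomp{D}$, the given $\map{f}$, and $\varphi=\pi_{\comp{C}}$, $\gamma=\pi_{\comp{D}}$; since $\comp{C}\simeq\mincomp{C}$, this produces a chain map
\[
\minmap{f}\;:=\;\pi_{\comp{D}}\,\map{f}\,\iota_{\comp{C}}\;:\;\mincomp{C}\longrightarrow\mincomp{D}
\]
making the evident square commute up to homotopy and unique up to homotopy with this property. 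Being a composite of $R$-linear chain maps, $\minmap{f}$ is $R$-linear, and since its source and target are minimal it is automatically a map of minimal complexes; the asserted uniqueness up to homotopy is precisely the uniqueness clause of the lemma.

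The only point that needs genuine care — and I expect it to be the sole obstacle, since no substantive homological input enters — is that the homotopy class of $\minmap{f}$ should be insensitive to the chosen splittings. Replacing the decomposition of $\comp{C}$ by another one substitutes $\mincomp{C}$ by an isomorphic minimal subcomplex and $\pi_{\comp{C}}$ by its composite with that isomorphism together with a null-homotopic correction, and likewise for $\comp{D}$; hence $\minmap{f}$ is altered only by conjugation by isomorphisms and addition of a null-homotopic term, which changes nothing once $\mincomp{C}$ and $\mincomp{D}$ are identified via the isomorphism guaranteed by the fact that homotopy equivalent minimal complexes are isomorphic (cf.\ \cite{rel}). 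This bookkeeping, together with the homotopy uniqueness supplied directly by Lemma~\ref{Lem:induced-map}, yields the claim.
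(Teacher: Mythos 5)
The paper gives no explicit proof of this proposition, treating it as an immediate consequence of Lemma~\ref{Lem:induced-map}, and your argument is precisely the intended one: you apply the lemma with $\varphi=\pi_{\comp{C}}$, $\gamma=\pi_{\comp{D}}$ to obtain $\minmap{f}=\pi_{\comp{D}}\map{f}\iota_{\comp{C}}$, after first checking (correctly) that the projection onto a minimal subcomplex is a homotopy equivalence with the inclusion as homotopy inverse. Your closing paragraph on independence of the chosen splitting is a sensible extra caution not demanded by the paper, but everything there is correct, so the proposal matches the paper's approach with a bit more detail filled in.
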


\subsection{The Category of Totally Acyclic Complexes} We denote by $\Ktac{R}$, the category of totally acyclic complexes, where the objects are totally acyclic $R$-complexes and the morphisms are homotopy equivalence classes of chain maps. This is a full subcategory of the homotopy category, $\CK(R)$, and so, $\Ktac{R}$ is triangulated with a suspension endofunctor $\shift{}:\! \Ktac{R}\to\Ktac{R}$ taking a complex $(\comp{C},\doug)$ to $(\shift{\comp{C}},\shift{\doug})$ where $(\shift{\comp{C}})_n:= C_{n-1}$ and $(\doug^{\shift{\comp{C}}})_n:=-\doug_{n-1}$. This functor maps the morphism $[f]:\![\comp{C}]\to[\comp{D}]$ to the morphism $\shift{[f]}:\!\shift{\comp{C}}\to\shift{\comp{D}}$, where $\shift{[f]}_n=[f]_{n-1}$ for each $n\in\Z$. Throughout the paper, we discuss ``endomorphisms'' on $\comp{C}$ of degree $-q$ with $q\in\Z^{+}$, which are given by $q$ applications of the suspension endofunctor, denoted by $\shiftq{q}{(-)}$ and it is straightforward to check that if two complexes are homotopy equivalent, say $\comp{C}\homeq\comp{D}$, then $\shiftq{q}{\comp{C}}\homeq\shiftq{q}{\comp{D}}$ for any $q\in\Z^+$.

In Section \ref{sec:crcodeg} of this paper, we additionally need the following lemma, which demonstrates a connection between endomorphisms on two isomorphic complexes in the category of $R$-complexes, $\CR$, which has as morphisms $R$-complex chain maps (\emph{not} homotopy classes).
\begin{lemma}\label{Lem:isom-comp}
	Let $\comp{C}$ and $\comp{D}$ be isomorphic as $R$-complexes in $\CR$. Then there is a one-to-one correspondence between chain endomorphisms on $\comp{C}$ and those on $\comp{D}$. Moreover, an endomorphism on $\comp{C}$ is degree-wise surjective (split injective) if and only if the corresponding endomorphism on $\comp{D}$ is surjective (split injective) at the same degrees.
\end{lemma}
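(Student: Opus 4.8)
The plan is to transport endomorphisms along a fixed isomorphism and then observe that conjugation by an isomorphism preserves surjectivity and split injectivity degree by degree. Fix an isomorphism $\psi\!:\comp{C}\to\comp{D}$ in $\CR$ together with its inverse $\psi^{-1}\!:\comp{D}\to\comp{C}$; concretely, each $\psi_n\!:C_n\to D_n$ is an isomorphism of $R$-modules and $\psi\doug^{\comp{C}}=\doug^{\comp{D}}\psi$, from which $\doug^{\comp{C}}\psi^{-1}=\psi^{-1}\doug^{\comp{D}}$. I would then define $\Phi$ on chain endomorphisms of $\comp{C}$ by $\Phi(\alpha):=\psi\,\alpha\,\psi^{-1}$. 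The first step is to check $\Phi$ is well defined, i.e.\ that $\Phi(\alpha)$ commutes with $\doug^{\comp{D}}$: this is immediate since $\psi$, $\alpha$, and $\psi^{-1}$ each commute with the relevant differentials, so their composite does too. Next, the assignment $\beta\mapsto\psi^{-1}\,\beta\,\psi$ is a two-sided inverse of $\Phi$ (the two round trips collapse using $\psi^{-1}\psi=\id_{\comp{C}}$ and $\psi\psi^{-1}=\id_{\comp{D}}$ in $\CR$), which gives the claimed one-to-one correspondence. If one also wishes to cover the degree $-q$ endomorphisms $\comp{C}\to\shiftq{q}{\comp{C}}$ mentioned above, the very same formula works after replacing the left-hand factor $\psi$ by the induced isomorphism $\shiftq{q}{\psi}\!:\shiftq{q}{\comp{C}}\to\shiftq{q}{\comp{D}}$.

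For the degree-wise statement, fix $n\in\Z$ and note that $\Phi(\alpha)_n=\psi_n\,\alpha_n\,\psi_n^{-1}$ as a map of $R$-modules (with the evident shift in the outer index in the degree $-q$ case). Since a composite of surjections is a surjection and every $\psi_n$ is bijective, $\Phi(\alpha)_n$ is surjective exactly when $\alpha_n$ is; applying the same reasoning to $\Phi^{-1}$ yields the converse, so $\alpha$ and $\Phi(\alpha)$ are surjective at precisely the same set of degrees. For split injectivity, if $\sigma_n$ satisfies $\sigma_n\alpha_n=\id$, then $\psi_n\,\sigma_n\,\psi_n^{-1}$ is an $R$-linear left inverse of $\Phi(\alpha)_n$, as a one-line computation shows, and conversely a splitting of $\Phi(\alpha)_n$ conjugates back to a splitting of $\alpha_n$; hence $\alpha$ is split injective at a given degree if and only if $\Phi(\alpha)$ is.

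I do not expect a genuine obstacle here — this is essentially a bookkeeping lemma — but the one point to be careful about is that ``isomorphic in $\CR$'' must be read as an honest degree-wise isomorphism of complexes, not merely a homotopy equivalence. That strictness is exactly what allows the conjugation argument to run on the nose at each individual degree, in contrast with Lemma~\ref{Lem:induced-map}, where one only obtains statements up to homotopy; it is also the reason the lemma is recorded on its own, since surjectivity and split injectivity at a fixed degree are not homotopy-invariant notions and so cannot be extracted from the homotopy-category version.
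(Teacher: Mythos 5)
Your proof is correct and follows essentially the same route as the paper: both transport endomorphisms by conjugating with the fixed degree-wise isomorphism, and both then read off surjectivity and split injectivity degree by degree from the fact that pre- and post-composing with an isomorphism preserves these properties. The only cosmetic difference is that you explicitly exhibit the inverse bijection $\beta\mapsto\psi^{-1}\beta\psi$ and conjugate the splitting itself, whereas the paper argues the split-injectivity direction by a slightly longer cancellation computation; the content is the same.
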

\begin{proof} 
	Take $f\!:\comp{C}\to\comp{D}$ to be a chain map with $f_n: C_n\to D_n $ an $R$-module isomorphism for each $n\in\Z$. Furthermore, let $\mu:\comp{C}\to\shiftq{q}{\comp{C}}$ and $\nu:\comp{D}\to\shiftq{q}{\comp{D}}$, noting that the diagram 
	\begin{center}
		$\xymatrix{ \comp{C} \ar[d]_{\mu} \ar[r]^{f} & \comp{D} \ar@{->}[d]^{\nu} \\ \shiftq{q}{\comp{C}}  \ar[r]^{\shiftq{q}{f}} & \shiftq{q}{\comp{D}} }$ 
	\end{center}
	must commute since $\nu f = ((\shiftq{q}{f})\mu f^{-1})f=(\shiftq{q}{f})\mu$. Thus, it must hold that $\Hom_{\CR}(\comp{C},\shiftq{q}{\comp{C}})\cong\Hom_{\CR}(\comp{D},\shiftq{q}{\comp{D}})$. We show the second part of the lemma with respect to split injectivity, noting that the proof for surjectivity is analogous. First suppose $\nu_n$ is split injective, thus implying the composition $\nu_n f_n=(\shiftq{q}{f})_n \mu_n$ is too. Therefore, $\mu_n$ is split injective. Conversely, if we first assume $\mu_n$ is split injective, then so is the composition $(\shiftq{q}{f})_n \mu_n$ and so there exists a left inverse $\gamma_n:(\shiftq{q}{\comp{D}})_n\to C_n$ such that $\gamma_n\circ(\shiftq{q}{f})_n\mu_n=\Id^{\comp{C}}_n$. This then implies
	\begin{gather*}
	\gamma_n\circ(\nu_nf_n)=\Id^{\comp{C}}_n \\
	f_n\circ\gamma_n\circ(\nu_nf_n)=f_n\circ\Id^{\comp{C}}_n \\
	f_n\circ\gamma_n\circ(\nu_nf_n)=\Id^{\comp{D}}_n\circ f_n \\
	(f_n\circ\gamma_n)\circ\nu_n=\Id^{\comp{D}}_n
	\end{gather*}
	since $f_n$ is right-cancellative. Hence, $f_n\circ\gamma_n$ is a left inverse for $\nu_n$, implying it is split injective, as desired.
\end{proof}

We will forgo the notation signifying equivalence class and it will be assumed that when we refer to $\comp{C}$ or $\map{f}$ in $\Ktac{R}$, we mean the equivalence class or an appropriate representative of the equivalence class. The class of distinguished triangles in $\Ktac{-}$ are determined by triangles of the form 
\begin{gather}\label{eqn:triangle}
\comp{C} \xrightarrow{\map{f}} \comp{D} \xrightarrow{\map{\iota}} \Cone{f} \xrightarrow{\map{\pi}} \shift{\comp{C}}
\end{gather} 
where $\Cone{f}$ is the mapping cone of the chain map $f:\!\comp{C}\to\comp{D}$. 
		%Throughout this paper, we will denote by the tuple $\tritupleCone{C}{D}{f}$ a distinguished triangle of the appropriate form. \RA{(Keep this notation? If so, remember to change notation later on.)} 
%A morphism of triangles is defined to be a triple $(f,g,h)$ of morphisms such that the diagram of triangles 
		%\begin{center}	$\xymatrix{ \comp{X} \ar[r]^x \ar[d]^f &\comp{Y} \ar[r]^y \ar[d]^g &\comp{Z} \ar[r]^z \ar[d]^h & \shift{X} \ar[d]^{\shift{f}} \\		\comp{X^{'}} \ar[r]^{x'} & \comp{Y^{'}} \ar[r]^{y'} & \comp{Z^{'}} \ar[r]^{z'} & \shift{X^{'}}	}$ \end{center} \vspace{2mm}
%commutes in $\Ktac{R}$, and any triangle isomorphic (in $\Ktac{R}$) to a triangle of the form in (\ref{eqn:triangle}) is again a distinguished triangle. 
		%For example, recall that $\Cone{\Id^{\comp{C}}}\simeq 0$ for any totally acyclic $Q$-complex $\comp{C}$, thus implying %$\trituple{C}{C}{0}$ $\comp{C}\to\comp{C}\to 0 \to \shift{\comp{C}}$ is a distinguished triangle in $\Ktac{Q}$. 
The reader may refer to Chapter $1$ of \cite{tricat} or \cite{neeTri} for the axioms of triangulated categories. Our interest in the triangulated structure of $\Ktac{R}$ is centered upon the fact that the functors $\HomC{C}{-}\!:\Ktac{R}\to \ab$ and $\HomC{-}{C}\!:\Ktac{R}^{op}\to \ab$ are both cohomological and, furthermore, for any abelian category \Ab, one has $\Ext_{\text{\Ab}}^i(A,B)=\Hom_{\der{\text{\Ab}}}(A,\shiftq{i}{B})$. %Meaning, any distinguished triangle induces long exact sequences in $\Ext$, just as short exact sequences do in \Rmod$\!$. %We utilize this phenomenon in Section \ref{Sec:CohomoChar} of this paper.

\subsection{Complete Resolutions, Tate Cohomology, and CI Operators} There is a well defined family $\mathfrak{t}=\{\map{t_j}\}$ (with $j=1,\dots,c$) of degree $-2$ chain endomorphisms on any $R$-complex called the \emph{CI operators}\footnote{Also called the \emph{Eisenbud operators.}} originally made explicit in \cite{Eisenbud} for a free resolution. In \cite{gu}, Gulliksen showed that $\Ext_{R}^*(M,N)$ and $\Tor_R^*(M,N)$ are graded modules over a polynomial ring with indeterminants of cohomological degree 2. Following Eisenbud's work, Avramov and Buchwietz provided a new variant of the CI operators, called \emph{cohomology operators} ($\!\!$ \cite[\S 3]{homalgCI-codim2}). Denote by  $\CS=R[\chi_1,\dots,\chi_c]$ the \emph{ring of cohomology operators} and for $R$-modules $M$, $N$ let $$\chi_j:\Ext_R^i(M,N)\to\Ext_R^{i+2}(M,N)$$ 
where $i\in\Z$ and $j=1,\dots,c$ (cf. \cite[\S 4.2]{minfreeresCI}). Furthermore, we may consider the graded module where $N=\k$ and denote by $\Ck=\k[\chi_1,\dots,\chi_c]$. In this case, $\Ext_{R}^*(M,N)$ is additionally a module over $\Ck$ since $\Ext_R^*(M,\k)$ will be annihilated by the maximal ideal $\m$ of $R$.

A \textit{complete resolution} of $M$ is a diagram $\comp{C} \xrightarrow{\rho} \res{P} \xrightarrow{\pi} M$  where $\comp{C}$ is in $\Ktac{R}$, $\res{P}$ is a projective (free) resolution of $M$, and $\rho$ is a morphism of complexes such that $\rho_n$ is bijective for all $n\gg0$ \cite{rel}. %Note that in the case of a complete intersection, $R$ is Gorenstein, and thus $\Gdim_RM<\infty$. 
%For any $\comp{C}$ in $\Ktac{R}$, denote $\comp{C}^{\geq n}$ as the truncated complex of $\comp{C}$, so that \begin{equation*} (\comp{C}^{\geq n})_i=\begin{cases}	C_i & i\geq n \\ 0 & i<n \end{cases} \end{equation*} and $$\comp{C} \xrightarrow{\rho} \comp{C}^{\geq n} \xrightarrow{\pi}  \text{Im}\doug^{\comp{C}}_n$$ is a complete resolution since $\rho_n$ is bijective for all $n\gg0$. 
%As discussed in \cite{tate}, a
Any object in $\Ktac{R}$ can be realized as a complete resolution of an $R$-module, $M=\Im\doug^{\comp{C}}_n$, and, conversely, there exists a (unique) minimal free resolution $\res{F}$ for every finitely-generated $R$-module $M$ so we may take this resolution as the projective resolution of $M$ in the diagram. We may then construct a \emph{minimal} totally acyclic complex $\comp{C}$ such that the bijectivity condition between $\comp{C}$ and $\res{F}$ holds ($\!\!$\cite{rel}), yielding a one-to-one correspondence between objects in $\Ktac{R}$ and objects in \Rmod$\!$. It is important to note that within this construction, $\Im\doug^{\comp{C}}_0=\Syz{n}{M}$ such that $\depth_R\Syz{n-1}{M}=\dim R-1$. %In fact, recall that we have an equivalence of categories $$\Ktac{R}\simeq\stMCM{R}$$ where the latter category is the stable category of maximal Cohen-Macaulay modules over $R$ (cf. \cite{mcm-tate}).

%Through the construction outlined in \cite{rel}, it is easy to see that just as a linear form of CI operators will eventually become degree-wise surjective on $\comp{C}$, there exists a linear form which eventually becomes injective on $\comp{C}$ as well. If we set $M=\Im\doug^{\comp{C}}_0$, then note that a linear form $\ell$ of CI operators $(R, \{f_i\}, \res{F^*})$ will eventually become surjective where $\res{F^*}$ is the (minimal) free resolution of $M^*:=\Hom_R(M,R)$. Note then that the $R$-complex $\Hom_R(\res{F^*},R)$ is equivalent to $\comp{C}^{\leq0}$ and, more importantly, that $\ell^*=\Hom_R(\ell,R)$ will eventually become \emph{injective} for $n\ll0$. One interesting question is whether or not there exists a linear form which realizes both eventual surjection and injection on a given $R$-complex. This question will further be addressed in Section \ref{Sec:Realizability} of this paper.

Finally, recall that the \emph{Betti sequence} of $M$ is $\betti{M}$,  where each $b_n:=\rk {C_n}$ or, equivalently, \[b_n=\dim_{\k}\Tateext_R^n(M,\k)\] for each $n\in\Z$. Our interest is in the growth of this sequence, and thus the generators of the syzygy sequence $\{\Omega^nM\}_{n\in\Z}$ (syzygies \emph{and} cosyzygies). It should further be noted that the cohomology operators defined in  \cite{homalgCI-codim2} act on $\comp{C}$ just as they do on $\res{F}$ as degree $2$ chain endomorphisms. We will consider these operators in the case where $N=\k$ so that $\chi_j:=\Hom_R(t_j, \k)$ for $j=1,\dots,c$ with $\codim(R,Q)=c$. It is indeed true that $\Tateext_R^*(M,\k)$ is unambiguously a module over the ring of cohomology operators $\CS=R[\chi_1,\dots,\chi_c]$ as well as $\Ck=\k[\chi_1,\dots,\chi_c]$. In fact, $E_{+}=\bigoplus_{i}\Tateext_R^{i}(M,\k)$ for $i\geq 0$ will be a noetherian module over $\Ck$ while $E_{-}=\bigoplus_{i}\Tateext_R^{i}(M,\k)$ for $i<0$ will be an artinian module over $\Ck$.

\subsection{The Critical Degree of a Finitely Generated Module} For ease of reference, we recall below the definition of critical degree for a finitely generated $Q$-module, as it was originally introduced in \cite[$\S 7$]{cid}. %as the homological degree at which realizable patterns of the Betti sequence begin to occur. 

\begin{defn}\label{Def:crdeg-modules}
	A $Q$-module $M$ has critical degree of at most $s$, denoted by $\crdeg{}{Q}{M}\leq s$, if its minimal resolution $\mathbb{F}$ has a chain endomorphism $\mu$ of degree $q<0$ such that $\mu_{n-q}: F_{n-q} \to F_n$ is surjective for all $n>s$. If no such $s$ exists, then $\crdeg{}{Q}{M}=\infty$.
\end{defn}

The authors prove in \cite{cid} that for any module of finite CI-dimension, the Betti sequence is non-decreasing past the critical degree, in addition to providing a cohomological characterization for the critical degree. However, they also give an example demonstrating that the critical degree cannot be bounded for all modules of complexity greater than $1$, since taking a cosyzygy out to the right would produce a higher critical degree. Moreover, in \cite[\emph{Example} 7.7]{cid} they show that strict growth does not necessarily signify the critical degree, thus meaning that the critical degree is where growth is \emph{guaranteed} to occur but not necessarily where the growth begins. In \cite[\emph{Theorem} 7.6]{homalgCI-codim2}, the authors give an effective bound on the critical degree of a finitely generated $R$-module of complexity $2$ dependent upon the Betti numbers and $g=\depth R-\depth_RM$.

\subsection{Cosocle, Coregular Sequences, and Codepth of a Module} 
First note that for any $\CS$-module $E$, the \emph{socle} of $E$ is $\soc(E)=\{u\in E\,|\,u\x=0\}$ where $\x=(\chi_1, \dots, \chi_c)$. Dually, (c.f. \cite{RcatMods}), the \emph{cosocle} of $E$
is defined as $$\cosoc(E)=\left\{ \bar{u}\in E/\x E \:|\: \bar{u}\neq 0  \right\}.$$
%In some sense, we can view the preimage of a cosocle element in $E$ as a minimal generator of $E$. 

By \cite[Theorem 2]{DCC-mat}, if $E$ is artinian, then $\x E=E$ if and only if there exists some $x\in \x$ such that $xE=E$. This in turn implies that whenever $\cosoc(E)=0$, there exists some $x\in\x$ such that the submodule generated by $x$ returns the module $E$. Equivalently, this means there is a \emph{coregular} element in $\CS$ ($\!\!$\cite{DCC-mat}, cf. \cite{coreg-quasi}). Recall that a \emph{coregular sequence} in $E$, or an \emph{$E$-cosequence}, is a sequence $\mathbf{\tilde{x}}=x_1,\dots,x_d$ such that
\begin{enumerate}
	\item[(1)] $x_1 E=E$, and
	\item[(2)] $x_i (0:_E (x_1,\dots,x_{i-1}))= (0:_E (x_1,\dots,x_{i-1}))$ for each $i=2,\dots,d$.
\end{enumerate}
Lastly, the \emph{codepth} of $E$, denoted $\cdepth_{\CS}E$, is defined to be the maximal length of an $E$-cosequence in $\x$ ($\!\!$\cite{DCC-mat}, cf. \cite{coreg-quasi} and \cite{Oomat-dual}). It should be clear that $\cdepth_{\CS}E=0$ implies $\cosoc(E)\neq0$, so that existence of some nonzero element $x\not\in\x E$ is guaranteed. We will use this fact in Section \ref{sec:cohomchar} to accomplish our goal of providing a dual analogue for the cohomological characterization of critical degree.

\section{The Critical Degree and Duality}\label{sec:crcodeg}

Our goal in this section is to extend the notion of \emph{critical degree} from free to complete resolutions. We begin with a complex $\comp{C}$ in $\Ktac{R}$ and the natural choice for such an extension.

\subsection{Main Definitions} Given an $R$-complex with $\comp{C}=\Csum{\mincomp{C}}{T}$, then if $\mu\!:\comp{C}\to\shiftq{q}{\comp{C}}$ is a morphism in $\Ktac{R}$, we write $\minmap{\mu}\!:\mincomp{C}\to\shiftq{q}{\mincomp{C}}$ for the induced endomorphism on $\mincomp{C}$, guaranteed by Lemma \ref{Lem:induced-map}.
\begin{defn}\label{Def:crdeg}
	An $R$-complex $\comp{C}$ in $\Ktac{R}$ has \emph{critical degree relative to} $\mu$ (or $\mu$\emph{-critical degree}) equal to \[ \crdeg{\mu}{R}{C} = \mathrm{inf}\{ n \mid \minmap{\mu}_{i+q}: \minmod{C}_{i+q} \twoheadrightarrow \minmod{C}_{i} \: \forall \: i > n \} \]
	and the \textbf{critical degree} of $\comp{C}$ is defined as the infimum over all $\mu$-critical degrees, \[\crdeg{}{R}{\comp{C}}= \mathrm{inf}\{ \crdeg{\mu}{R}{\comp{C}} \: | \: \mu : \comp{C} \to \shiftq{q_\mu}{\comp{C}} \}, \] where $\crdeg{}{R}{\comp{C}}=\infty$ if all possible $\mu$-critical degrees are infinite.
\end{defn}
%In other words, given a degree $-q<0$ chain endomorphism $\mu$ on $\comp{C}$, we say that $\crdeg{\mu}{R}{\comp{C}}=s_{\mu}$ if there exists an integer such that  $s_{\mu}$ is the least integer for which $\minmap{\mu}_{n+q}: \mincomp{C}_{n+q} \to \mincomp{C}_{n}$ is surjective for all $n > s_{\mu}$. If $\minmap{\mu}_n$ is surjective for all $n$, then $\crdeg{\mu}{R}{\comp{C}}=-\infty$. If no such integer $s_{\mu}$ exists and $\crdeg{\mu}{R}{\comp{C}}\neq -\infty$, then $\crdeg{\mu}{R}{\comp{C}}=\infty$ by definition.
\begin{rmk}
	Note that for any $R$-complex $\comp{C}$, if we consider the $R$-module $M=\Im\doug^{\comp{C}}_0$, then the above definition of critical degree for $\comp{C}$ will indeed agree with Definition \ref{Def:crdeg-modules} whenever $\crdeg{}{R}{M}\geq0$. If $\crdeg{}{R}{M}<0$, this is not necessarily the case and we later discuss a special case in which $\crdeg{}{R}{M}=-1$ but $-\infty\leq\crdeg{}{R}{C}<-1$.
\end{rmk}

It should further be noted that, as $R$ is a complete intersection, $\crdeg{}{R}{\comp{C}}<\infty$ for two reasons. First, by \cite[Proposition 7.2]{cid} and the remark above. This of course implies there exists an endomorphism $\mu$ realizing this degree, which brings us to the second reason: this $\mu$ must be a linear form of the CI operators, as presented in \cite[\emph{Theorem 3.1}]{Eisenbud}. 

We now define the dual analogue of critical degree and work towards making these definitions precise in $\Ktac{R}$. 
\begin{defn}\label{Def:cocrdeg}
	An $R$-complex $\comp{C}$ in $\Ktac{R}$ has \textit{cocritical degree relative to} $\mu$ (or $\mu$\emph{-cocritical degree}) equal to  \[ \cocrdeg{\mu}{R}{\comp{C}} = \mathrm{sup}\{ n \mid \minmap{\mu}_{i}: \minmod{C}_{i} \hookrightarrow \minmod{C}_{i-q} \text{ splits } \: \forall \: i < n \} \]
	and the \textbf{cocritical degree} of $\comp{C}$ is defined to be the supremum over all $\mu$-cocritical degrees, \[ \cocrdeg{}{R}{\comp{C}}= \mathrm{sup}\{ \cocrdeg{\mu}{R}{\comp{C}} \: | \: \mu : \comp{C} \to \shiftq{q_\mu}{C} \}, \] where $\crdeg{}{R}{C}=-\infty$ if all such relative cocritical degrees are negatively infinite.
\end{defn}

\begin{prop}\label{Prop:hom-stable}
	Let $\comp{C}$ and $\comp{D}$ be  homotopically equivalent $R$-complexes. Then
	$\crdeg{}{R}{\comp{C}}=\crdeg{}{R}{\comp{D}}$ and $\cocrdeg{}{R}{\comp{C}}=\cocrdeg{}{R}{\comp{D}}$.
\end{prop}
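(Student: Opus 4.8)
The plan is to reduce everything to minimal subcomplexes and invoke the structural facts already assembled in the preliminaries. Recall that any $R$-complex admits a decomposition $\comp{C} = \mincomp{C} \oplus \comp{T}$ with $\comp{T}$ contractible, and — crucially — if $\comp{C} \simeq \comp{D}$ then their minimal subcomplexes are \emph{isomorphic} in $\CR$, i.e. $\mincomp{C} \cong \mincomp{D}$ as chain complexes (not merely up to homotopy). So first I would fix an isomorphism $\psi\!:\mincomp{C}\xrightarrow{\ \cong\ }\mincomp{D}$ of $R$-complexes.

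Next I would set up the bijection between degree $-q$ endomorphisms. Given $\mu\!:\comp{C}\to\shiftq{q}{\comp{C}}$ in $\Ktac{R}$, Lemma \ref{Lem:induced-map} (via Proposition \ref{Prop:min-induced-map}) produces $\minmap{\mu}\!:\mincomp{C}\to\shiftq{q}{\mincomp{C}}$; conjugating by $\psi$ and $\shiftq{q}{\psi}$ gives an endomorphism $\nu\!:\mincomp{D}\to\shiftq{q}{\mincomp{D}}$ with $\nu = (\shiftq{q}{\psi})\circ\minmap{\mu}\circ\psi^{-1}$, which lifts to an endomorphism of $\comp{D}$ of the same degree. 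Since $\psi$ is an isomorphism of $R$-complexes and $\mincomp{C}\cong\mincomp{D}$ in $\CR$, Lemma \ref{Lem:isom-comp} applies directly to $\psi$: the correspondence $\minmap{\mu}\leftrightarrow\nu$ is a bijection on chain endomorphisms, and $\minmap{\mu}_{i+q}$ is surjective (resp. $\minmap{\mu}_i$ is split injective) exactly when $\nu_{i+q}$ is surjective (resp. $\nu_i$ is split injective), at precisely the same degrees. Therefore $\crdeg{\mu}{R}{\comp{C}} = \crdeg{\nu}{R}{\comp{D}}$ and $\cocrdeg{\mu}{R}{\comp{C}} = \cocrdeg{\nu}{R}{\comp{D}}$ for corresponding pairs. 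Taking the infimum (resp. supremum) over all such $\mu$ on one side and all $\nu$ on the other — which range over the same sets under this bijection — yields $\crdeg{}{R}{\comp{C}} = \crdeg{}{R}{\comp{D}}$ and $\cocrdeg{}{R}{\comp{C}} = \cocrdeg{}{R}{\comp{D}}$.

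One subtlety to handle carefully: in $\Ktac{R}$ morphisms are homotopy classes, so the critical degree is defined via the \emph{induced} minimal endomorphism $\minmap{\mu}$, which Proposition \ref{Prop:min-induced-map} only pins down up to homotopy. I should check that the surjectivity/split-injectivity conditions on the components $\minmap{\mu}_{i}$ are well defined on homotopy classes of maps $\mincomp{C}\to\shiftq{q}{\mincomp{C}}$ — but since $\mincomp{C}$ is minimal, $\doug(\mincomp{C})\subseteq\m\mincomp{C}$, and a null-homotopic map between minimal complexes has all components landing in $\m\mincomp{C}$; adding such a map to $\minmap{\mu}$ changes $\minmap{\mu}_i$ by something whose image is in $\m\minmod{C}_{i-q}$, hence (by Nakayama) cannot affect surjectivity, and similarly cannot affect split injectivity since a splitting would contradict minimality. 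This makes the relative critical/cocritical degrees genuine invariants of the homotopy class, so the identification above is legitimate.

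I expect the main obstacle to be purely bookkeeping: making sure the passage $\mu \rightsquigarrow \minmap{\mu} \rightsquigarrow \nu$ is genuinely a bijection respecting the degree, and that the infimum/supremum are taken over matching index sets (including tracking $q_\mu = q_\nu$). The mathematical content is entirely contained in Lemma \ref{Lem:isom-comp} and the rigidity of minimal complexes under homotopy equivalence; no new ideas are needed beyond organizing these.
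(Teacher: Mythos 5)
Your argument is correct and follows essentially the same route as the paper: the paper's (one-line) proof simply invokes Lemmas~\ref{Lem:induced-map} and~\ref{Lem:isom-comp}, exactly the two facts you organize into the passage $\mu \rightsquigarrow \minmap{\mu} \rightsquigarrow \nu$ through the isomorphism of minimal subcomplexes. Your extra well-definedness check via Nakayama (that a null-homotopic map between minimal complexes lands in $\m$ times the target, so cannot affect degree-wise surjectivity or split injectivity) is a worthwhile refinement that the paper leaves implicit.
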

\begin{proof}
	The proof of this statement is a straightforward application of Lemmas \ref{Lem:induced-map} and \ref{Lem:isom-comp}
\end{proof}
\begin{cor}\label{Cor:ZeroCrCo}
	If $\comp{C}\simeq 0$, then $\crdeg{}{R}{\comp{C}}=-\infty$ and $\cocrdeg{}{R}{\comp{C}}=\infty$.
\end{cor}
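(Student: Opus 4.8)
The plan is to unwind the definitions and observe that the zero complex has an empty (hence trivially everything-satisfying) collection of degrees. First I would recall that if $\comp{C}\simeq 0$, then by Proposition \ref{Prop:hom-stable} we may replace $\comp{C}$ by its minimal subcomplex $\mincomp{C}$, which in this case is the zero complex: every $\minmod{C}_n = 0$. So it suffices to compute $\crdeg{}{R}{\comp{C}}$ and $\cocrdeg{}{R}{\comp{C}}$ directly for the zero complex.

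Next I would handle the critical degree. Fix any $q \in \Z^+$ and the (unique, zero) endomorphism $\mu\colon \comp{C}\to\shiftq{q}{\comp{C}}$; then $\minmap{\mu}_{i+q}\colon \minmod{C}_{i+q}=0 \to \minmod{C}_{i}=0$ is the zero map between zero modules, which is surjective for \emph{every} $i\in\Z$. Hence the set $\{ n \mid \minmap{\mu}_{i+q}\colon \minmod{C}_{i+q}\twoheadrightarrow\minmod{C}_{i}\ \forall\, i>n\}$ is all of $\Z$, and its infimum is $-\infty$; taking the infimum over all such $\mu$ preserves this, so $\crdeg{}{R}{\comp{C}}=-\infty$. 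The cocritical degree is the mirror image: the zero map $0\to 0$ is split injective (the zero map back is a left inverse), so it splits for all $i\in\Z$, the relevant set is all of $\Z$, and its supremum is $+\infty$; taking the supremum over $\mu$ keeps it at $+\infty$, giving $\cocrdeg{}{R}{\comp{C}}=\infty$.

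There is essentially no obstacle here — the content is entirely definitional, and the only point requiring a moment's care is the convention that an infimum over $\Z$ (or over the empty set, if one reads it that way) is $-\infty$ and a supremum is $+\infty$, which matches the "all infinite / all negatively infinite" conventions already baked into Definitions \ref{Def:crdeg} and \ref{Def:cocrdeg}. I would also remark in passing that this corollary is the natural degenerate base case consistent with Corollary \ref{Cor:ZeroCrCo}'s role: a contractible complex carries no asymptotic Betti growth data, so both invariants are pushed to their extreme values.
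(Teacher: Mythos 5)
Your proof is correct and is essentially the argument the paper leaves implicit: the paper states this as an immediate corollary of Proposition~\ref{Prop:hom-stable}, reducing to the zero complex, and your direct verification that the (unique, zero) endomorphism on the zero complex is surjective and split-injective in every degree supplies exactly that reduction. One cosmetic note: the set appearing in Definition~\ref{Def:crdeg} is all of $\Z$ in this case (not the empty set), so the parenthetical ``or over the empty set, if one reads it that way'' is unnecessary, though it does not affect the conclusion.
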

\begin{prop}\label{Prop: periodic}
	Let $\comp{C}$ be in $\Ktac{R}$ with a periodic minimal subcomplex $\mincomp{C}$. Then $\crdeg{}{R}{\comp{C}}=-\infty$ and $\cocrdeg{}{R}{\comp{C}}=\infty$.
\end{prop}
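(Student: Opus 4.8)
The plan is to manufacture, for the minimal complex, a single degree-decreasing endomorphism that is bijective in every degree; both equalities then fall out of the definitions. First I would reduce to the case $\comp{C}=\mincomp{C}$: by Proposition~\ref{Prop:hom-stable} and $\comp{C}\homeq\mincomp{C}$ we have $\crdeg{}{R}{\comp{C}}=\crdeg{}{R}{\mincomp{C}}$ and $\cocrdeg{}{R}{\comp{C}}=\cocrdeg{}{R}{\mincomp{C}}$, and $\mincomp{C}$ again lies in $\Ktac{R}$ (its homology, and that of its dual, is a direct summand of $\H(\comp{C})=0=\H(\comp{C}^{*})$, and its terms are finitely generated free summands of those of $\comp{C}$), with minimal subcomplex equal to itself.

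Next I would unpack periodicity: $\mincomp{C}$ being periodic means there is an integer $p>0$ together with an isomorphism of $R$-complexes $\phi\colon\mincomp{C}\xrightarrow{\cong}\shiftq{p}{\mincomp{C}}$. Since $\shiftq{p}{\mincomp{C}}$ is again minimal and still lies in $\Ktac{R}$ (closure under the suspension endofunctor), and since a chain isomorphism is in particular a chain map, $\phi$ represents a morphism $\mincomp{C}\to\shiftq{p}{\mincomp{C}}$ in $\Ktac{R}$; that is, it is an admissible choice of $\mu$ with $q_\mu=p$ in Definitions~\ref{Def:crdeg} and~\ref{Def:cocrdeg}. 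Because $\mincomp{C}$ is its own minimal subcomplex, the induced map $\minmap{\phi}$ may be taken to be $\phi$ itself (Proposition~\ref{Prop:min-induced-map}), so each component $\minmap{\phi}_i\colon\minmod{C}_i\to\minmod{C}_{i-p}$ is an $R$-module isomorphism, hence simultaneously surjective and split injective, for every $i\in\Z$.

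Finally I would read off the two values. The defining set $\{\,n \mid \minmap{\phi}_{i+p}\ \text{is surjective for all } i>n\,\}$ is then all of $\Z$, so $\crdeg{\phi}{R}{\mincomp{C}}=-\infty$, which forces $\crdeg{}{R}{\mincomp{C}}=-\infty$; symmetrically, $\{\,n \mid \minmap{\phi}_{i}\ \text{splits for all } i<n\,\}$ is all of $\Z$, so $\cocrdeg{\phi}{R}{\mincomp{C}}=+\infty$ and hence $\cocrdeg{}{R}{\mincomp{C}}=+\infty$. Transporting back along Proposition~\ref{Prop:hom-stable} yields $\crdeg{}{R}{\comp{C}}=-\infty$ and $\cocrdeg{}{R}{\comp{C}}=\infty$.

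The only point that takes any care — the ``main obstacle,'' such as it is — is the second step: making precise what a periodic minimal complex is and confirming that the periodicity isomorphism genuinely qualifies as one of the endomorphisms $\mu$ permitted in Definition~\ref{Def:crdeg}, namely that it is an honest chain map into a suspension of $\mincomp{C}$ that still sits inside $\Ktac{R}$. Once that is granted, the rest is bookkeeping with the infimum/supremum conventions in the two definitions.
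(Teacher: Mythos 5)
Correct, and essentially the same as the paper's proof. Both reduce to the minimal case, use the periodicity isomorphism as the degree-$q$ endomorphism (the paper writes it informally as $\rho_n := \id_n^{C}\colon C_n \to C_{n-q}\cong C_n$, while you keep the chain isomorphism $\phi\colon \mincomp{C}\to\shiftq{p}{\mincomp{C}}$ explicit), and observe that each component is bijective, hence simultaneously surjective and split injective in every degree, which forces the relative critical and cocritical degrees to $-\infty$ and $+\infty$.
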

\begin{proof}
	Without loss of generality, assume $\comp{C}$ is minimal and periodic with $\im\doug^{\comp{C}}_n\cong\im\doug^{\comp{C}}_{n+q}$. Then define a degree $q$ endomorphism %$\{\rho_n\}_{n\in\Z}$ 
	where $\rho_n:=\id^{C}_n\!:C_n \to C_{n-q}\cong C_n$ for each $n\in\Z$. Hence, $\rho_n$ is both surjective and split injective for all $n\in\Z$, yielding $\crdeg{\rho}{Q}{C}=-\infty$ and $\cocrdeg{\rho}{Q}{C}=\infty$. This in turn forces the critical and cocritical degrees of $\comp{C}$ to be $-\infty$ and $\infty$, respectively.
\end{proof}
As a direct result of Proposition \ref{Prop: periodic}, note that if $R$ is a hypersurface, then $\crdeg{}{R}{\comp{C}}=-\infty$ and $\cocrdeg{}{R}{\comp{C}}=\infty$ for any $R$-complex. This agrees with the bound provided in \cite[Theorem 7.3, Part (1)]{cid}. %\RA{Potentially put in $crdeg<cocrdeg$ implies periodicity result here?}

\subsection{Duality} Denote by $M^*=\Hom_R(M,R)$ and note that it is quite easy to see the natural connection between this module and cocritical degree. 
\begin{prop}\label{Prop: Rdual and Cocrdeg}
	Suppose $\crdeg{}{R}{M^*}=s^*$, where $0\leq s^* < \infty$. Then $\cocrdeg{}{R}{\comp{C}}=-s^*-1$ with $\comp{C}$ the complete resolution of $M$. 
\end{prop}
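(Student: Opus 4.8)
The plan is to dualize the complete resolution and trade surjectivity for split injectivity. By Proposition~\ref{Prop:hom-stable} we may assume $\comp{C}$ is minimal and normalized so that $M=\Im\doug^{\comp{C}}_0$, hence $\Omega^nM=\Im\doug^{\comp{C}}_n$ for every $n\in\Z$. Since $R=Q/\f$ is a complete intersection it is Gorenstein, and the cycle modules $\Omega^nM$ are maximal Cohen--Macaulay (they are the syzygies and cosyzygies attached to the object $\comp{C}$ of $\Ktac{R}$), so $\Ext^{i}_R(\Omega^nM,R)=0$ for all $i>0$ and all $n$. The first step is to record what this does to the dual complex. Applying $\Hom_R(-,R)$ to the short exact sequences $0\to\Omega^{n+1}M\to C_n\to\Omega^nM\to 0$ therefore preserves exactness, and one reads off that $\comp{C}^*=\Hom_R(\comp{C},R)$ is again a minimal totally acyclic complex which is the complete resolution of $M^*$ \emph{up to a shift by one}: concretely, writing $\res{G}$ for the minimal free resolution of $M^*$ with $j$-th free module $G_j$, we get $G_j=\Hom_R(C_{-1-j},R)$ for all $j\ge 0$, the surjection $G_0\twoheadrightarrow M^*$ being dual to the inclusion $M=\Im\doug^{\comp{C}}_0\hookrightarrow C_{-1}$. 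It is precisely this off-by-one in the identification that will produce the ``$-1$'' in the statement.

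The second step is a dictionary between endomorphisms together with one local observation. Because $\Hom_R(-,R)$ is an involutive (up to natural isomorphism) duality on complexes of finitely generated free $R$-modules that carries null-homotopies to null-homotopies, a degree $-q$ endomorphism $\mu\colon\comp{C}\to\shiftq{q}{\comp{C}}$ dualizes to a degree $-q$ endomorphism of $\comp{C}^*$, the assignment being a bijection; through the identification $G_j=\Hom_R(C_{-1-j},R)$ this matches it, in the large-degree range that governs critical degree, with a degree $-q$ chain endomorphism $\mu^\vee$ of $\res{G}$ satisfying $(\mu^\vee)_{j+q}=(\mu_{-1-j})^*$. (For the endomorphisms that realize (co)critical degree this correspondence is transparent, since such an endomorphism may be taken to be a linear form in the CI operators and the CI operators are natural under $\Hom_R(-,R)$; cf. \cite[Theorem~3.1]{Eisenbud}.) The local observation is that for a map $\phi$ between finitely generated free $R$-modules, $\phi$ is surjective if and only if $\phi^*$ is split injective, and $\phi$ is split injective if and only if $\phi^*$ is surjective. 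Combining these, and writing $i=-1-j$: the component $(\mu^\vee)_{j+q}\colon G_{j+q}\to G_j$ is surjective exactly when $\minmap{\mu}_{i}\colon\minmod{C}_{i}\to\minmod{C}_{i-q}$ is split injective.

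The last step is the index arithmetic. The substitution $j\mapsto i=-1-j$ is an order-reversing bijection of $\Z$ taking the half-line $\{\,j>s^*\,\}$ onto $\{\,i<-s^*-1\,\}$. Hence, for a matched pair $\mu^\vee\leftrightarrow\mu$, the statement ``$(\mu^\vee)_{j+q}$ is surjective for all $j>n$'' is equivalent to ``$\minmap{\mu}_{i}$ is split injective for all $i<-n-1$'', so that $\cocrdeg{\mu}{R}{\comp{C}}=-\,\crdeg{\mu^\vee}{R}{M^*}-1$. Taking the supremum over all endomorphisms $\mu$ of $\comp{C}$ corresponds, via the bijection and the order-reversing map $x\mapsto-x-1$, to taking the infimum over all chain endomorphisms of $\res{G}$, which yields
\[
\cocrdeg{}{R}{\comp{C}}\;=\;-\inf_{\mu^\vee}\crdeg{\mu^\vee}{R}{M^*}-1\;=\;-\crdeg{}{R}{M^*}-1\;=\;-s^*-1 ,
\]
the hypothesis $0\le s^*<\infty$ ensuring that all the free modules in sight are nonzero and that every quantity is finite. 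The step I expect to be the real obstacle is Step~1 and its interface with Step~2: pinning down the one-step shift in ``$\comp{C}^*$ is the complete resolution of $M^*$'' exactly (this is what forces the $-1$), and justifying that chain endomorphisms of the minimal free resolution $\res{G}$ of $M^*$ and of the complete resolution $\comp{C}$ correspond faithfully for the purpose of computing (co)critical degree — which is where the reduction to CI-operator linear forms and their naturality under duality carry the argument. Once the set-up is fixed, the surjectivity/split-injectivity duality and the index arithmetic of Steps~2--3 are routine.
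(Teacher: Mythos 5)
Your proof is correct and is essentially the paper's own argument fully unpacked: the paper's proof is a one-line appeal to ``the contravariance of $\Hom_R(-,R)$'' and ``the relabeling of degrees\dots in the process of taking the complete resolution,'' which is precisely the dictionary (surjective $\leftrightarrow$ split injective under $(-)^*$, matching Lemma~\ref{Lem:sur-inj-maps1} and Corollary~\ref{Cor:surj-inj-maps2}) and the index substitution $j\mapsto -1-j$ that you spell out. The only thing worth flagging is that your final line implicitly needs the correspondence $\mu\leftrightarrow\mu^\vee$ to be \emph{onto} the endomorphisms of $\res{G}$ so that the supremum over $\mu$ really computes $-\crdeg{}{R}{M^*}-1$; this is fine because every chain endomorphism of the minimal free resolution $\res{G}$ lifts to one of the complete resolution by the comparison theorem (as in \cite[1.5]{taa}), or, more cheaply, because both quantities are realized by the same CI-operator linear forms, which you already note.
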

\begin{proof}
	This is a direct application of complete resolutions and the contravariance of the functor $\Hom_R(-,R)$. The negative one-degree shift is due to the relabeling of degrees in $\Hom_R(\res{F}^*,R)$ in the process of taking the complete resolution.
\end{proof}
Now, for any complex $\comp{C}$ in $\Ktac{R}$, denote by $\dual{C}=\Hom_R(\comp{C},R)$ which is also a totally acyclic $R$-complex. Explicitly, if $\comp{C}=(C_n, \doug^{\comp{C}}_n)$, then $\dual{C}=(C^*_n, \doug^{\comp{C}^*}_n)$ is the $R$-complex with
\begin{gather*}
C^*_n=\Hom_R(C_{-n}, R)\cong C_{-n}, \text{ and} \\ 
\doug^{\comp{C}^*}_n=\Hom_R(\doug^{\comp{C}}_{1-n}, R)\cong (\doug^{\comp{C}}_{1-n})^T.
\end{gather*}  where $(-)^T$ represents the transpose. 
%Additionally, if $\mu$ is a degree $q$ chain endomorphism on $\comp{C}$, then $\mu^*$ is a degree $q$ chain endomorphism on $\dual{C}$ with $\mu^*_{n}=\Hom_R(\mu_{-n-q}, R)$. As each $C_n$ is free, note that $C^*_n= C_{-n}$, %and the $R$-module maps within the differentials and any chain map are given by matrices with entries in $R$. Therefore, 
%$\doug^{\comp{C}^*}_n=(\doug^{\comp{C}}_{1-n})^T$, and $\mu^{*}_n=(\mu_{-n+q})^T$, where $(-)^T$ represents the transpose. 
It should then be clear that any chain map on $\comp{C}$ does indeed induce a well defined chain map on $\dual{C}$. %The following lemma explores the connection between degree-wise surjection and split injection of a chain map and its transpose.
\begin{lemma}\label{Lem:sur-inj-maps1}
	Let $f\!: R^n\to R^m$ be a map between free $R$-modules. Then $f$ is (split) surjective if and only if $f^T$ is split injective. 
\end{lemma}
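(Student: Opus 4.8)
The statement is a purely linear-algebraic fact about maps of finite free modules over a commutative ring, so the strategy is to reduce everything to properties of $R$-module homomorphisms and their transposes, using the standard duality $\Hom_R(-,R)$ on the category of finite free $R$-modules. The key observation is that $\Hom_R(-,R)$ is a contravariant, additive, \emph{exact on split sequences} functor that sends $R^n$ to $R^n$ and, under the canonical identifications $\Hom_R(R^n,R)\cong R^n$, sends a map $f$ to its transpose $f^T$; moreover it is an involution, i.e. $(f^T)^T = f$ after identifying a free module with its double dual. I would state this duality explicitly at the start of the proof so the two directions become mirror images.

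\medskip

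\textbf{Key steps.} First I would prove the easy implication: if $f\colon R^n\to R^m$ is split surjective, then there is $s\colon R^m\to R^n$ with $fs = \id_{R^m}$; applying $\Hom_R(-,R)$ and the identifications above gives $s^T f^T = \id_{R^m}$, so $f^T$ is split injective (with left inverse $s^T$). Note split surjectivity of $f$ is automatic from surjectivity here since $R^m$ is free (hence projective), so ``split surjective'' and ``surjective'' coincide for $f$; this is worth a sentence. For the converse, suppose $f^T\colon R^m \to R^n$ is split injective, with left inverse $g\colon R^n\to R^m$, so $g f^T = \id_{R^m}$. Apply $\Hom_R(-,R)$ again: $(f^T)^T g^T = \id_{R^m}$, and since $(f^T)^T = f$ under the canonical isomorphisms, we get $f g^T = \id_{R^m}$, i.e. $g^T$ is a section of $f$, so $f$ is split surjective. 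That closes the equivalence.

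\medskip

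\textbf{Main obstacle.} There is no deep obstacle — the content is bookkeeping — so the only thing requiring care is making the canonical identifications $\Hom_R(R^n,R)\cong R^n$ and the double-dual isomorphism $R^n \xrightarrow{\ \sim\ } R^{n**}$ precise enough that ``$(f^T)^T = f$'' is literally correct, not just correct up to unnamed isomorphisms. Concretely, one should fix the standard dual bases so that the matrix of $f^T$ in those bases is the transpose of the matrix of $f$, and then the matrix identity $(A^T)^T = A$ does all the work. I would phrase the whole argument in terms of matrices (choosing bases, since the modules are free of finite rank) if a basis-free account threatens to get fussy: a map is split surjective iff its matrix $A$ has a right inverse, split injective iff $A$ has a left inverse, and $A^T B^T = (BA)^T$ converts one to the other. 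That reduces the lemma to the trivial remark that $A$ has a right inverse iff $A^T$ has a left inverse.
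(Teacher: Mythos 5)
Your proof is correct and follows essentially the same route as the paper's: both apply the contravariant functor $\Hom_R(-,R)$ and use $(f^T)^T=f$ to get the converse direction for free. You are somewhat more explicit — working directly with the section and retraction maps rather than invoking exactness on the relevant (automatically split, since $R^m$ is projective) short exact sequences — which makes the ``split'' conclusion more transparent, but the underlying argument is the same.
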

\begin{proof}	
	This statement follows from the fact that $\Hom_R(-,R)$ is exact on \Rmod and we may apply it to preserve exactness of the appropriate short exact sequences for each direction. The backwards direction follows as $(f^T)^T=f$.%}
\end{proof}
%\Rebe{The proof above is very similar to the technique at the beginning of the proof of Proposition \ref{Prop:tri-cr&cocr}. Is there a more general statement that has been shown for which I can simply refer to?}
\begin{cor}\label{Cor:surj-inj-maps2}
	Let $g\!: R^n\to R^m$ be a map between free $R$-modules. Then $g$ is split injective if and only if $g^T$ is (split) surjective.
\end{cor}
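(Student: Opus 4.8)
The plan is to derive Corollary \ref{Cor:surj-inj-maps2} directly from Lemma \ref{Lem:sur-inj-maps1} by a transpose substitution, exploiting the involutive nature of $(-)^T$ on maps between free $R$-modules. Specifically, I would apply Lemma \ref{Lem:sur-inj-maps1} not to $g$ itself but to the map $f := g^T \colon R^m \to R^n$.

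Here is how I would carry it out. Given $g \colon R^n \to R^m$, set $f = g^T \colon R^m \to R^n$. Since $R$ is commutative and these are maps between finitely generated free modules, we have $f^T = (g^T)^T = g$ (identifying each module with its double dual via the canonical isomorphism, exactly as the excerpt does when writing $C^*_n \cong C_{-n}$). Now Lemma \ref{Lem:sur-inj-maps1} applied to $f$ states: $f$ is (split) surjective if and only if $f^T$ is split injective. Rewriting in terms of $g$, this reads: $g^T$ is (split) surjective if and only if $g$ is split injective. Reversing the biconditional gives precisely the statement of the corollary: $g$ is split injective if and only if $g^T$ is (split) surjective. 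The only subtlety worth a sentence is the parenthetical ``(split)'': over a general ring a surjection of free modules need not split, but a surjection \emph{onto} a free module always splits since free modules are projective, which is why Lemma \ref{Lem:sur-inj-maps1} can afford the parenthetical and so can we; this is already implicitly handled upstream, so no new work is needed.

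I do not anticipate a genuine obstacle here — the result is a formal consequence of Lemma \ref{Lem:sur-inj-maps1} together with the identity $(g^T)^T = g$. If anything, the only point requiring a word of care is making sure the double-transpose identity is invoked over $R$-modules that are finitely generated and free (so that the canonical map to the double dual is an isomorphism), which is exactly the standing hypothesis; no finiteness or freeness issues arise beyond what Lemma \ref{Lem:sur-inj-maps1} already assumes. So the proof is essentially one line: apply Lemma \ref{Lem:sur-inj-maps1} to $g^T$ and use $(g^T)^T = g$.
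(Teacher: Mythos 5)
Your proof is correct and is essentially the same as the paper's: both apply Lemma \ref{Lem:sur-inj-maps1} to $g^T$ and invoke the identity $(g^T)^T = g$ to translate the conclusion back. Your write-up is somewhat more explicit about the double-dual identification and the reason the parenthetical ``(split)'' is harmless, but the underlying argument is identical.
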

\begin{proof}
	Apply previous lemma with $g=(f^T)$ and $g^T=(f^T)^T=f$.
\end{proof}

\begin{prop}\label{Prop:dual-crco}
	Let $\mu\!:\comp{C}\to \shiftq{q}{\comp{C}}$ be a chain endomorphism realizing $\crdeg{}{R}{\comp{C}}=s$ and $\nu\!:\comp{C}\to \shiftq{r}{\comp{C}}$ a chain endomorphism realizing $\cocrdeg{}{R}{\comp{C}}=t$. The \crco of $\dual{C}$ are completely determined by that of $\comp{C}$, where $$\crdeg{}{R}{\dual{C}}=-\cocrdeg{}{R}{\comp{C}} \text{ and}$$ $$\cocrdeg{}{R}{\dual{C}}=q-\crdeg{}{R}{\comp{C}}.$$ %where $q$ is the degree of the endomorphism realizing $\crdeg{}{Q}{\comp{C}}$.
\end{prop}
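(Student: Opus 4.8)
The plan is to exploit the explicit description of the dual complex $\dual{C}=(C^*_n,\doug^{\comp{C}^*}_n)$ given just above the statement, namely $C^*_n\cong C_{-n}$ and $\doug^{\comp{C}^*}_n\cong(\doug^{\comp{C}}_{1-n})^T$, together with the observation that transposition converts a degree $-q$ chain endomorphism on $\comp{C}$ into a degree $-q$ chain endomorphism on $\dual{C}$ that acts on the degree-reversed, transposed pieces. Concretely, if $\mu\!:\comp{C}\to\shiftq{q}{\comp{C}}$ has components $\mu_{n}\!:C_{n}\to C_{n-q}$, then applying $\Hom_R(-,R)$ produces $\mu^{*}\!:\dual{C}\to\shiftq{q}{\dual{C}}$ with component at degree $n$ essentially $(\mu_{q-n})^{T}\!:C^*_{n}\cong C_{-n}\to C_{-n-q}\cong C^*_{n+q}$ — so it runs the ``other way,'' which is exactly why surjectivity becomes split injectivity and vice versa. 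Since by Proposition~\ref{Prop:hom-stable} all these quantities are homotopy invariants, I may assume $\comp{C}$ (hence $\dual{C}$) is minimal, so $\minmod{C}=\comp{C}$ and all the (co)critical degree conditions are literally degree-wise surjectivity/split-injectivity of the $\mu_n$.

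The key steps, in order: (1) Fix the minimal representative and make the indexing of $\mu^{*}$ precise, i.e.\ pin down the degree shift introduced by $C^*_n\cong C_{-n}$ and $\doug^{\comp{C}^*}_n\cong(\doug^{\comp{C}}_{1-n})^{T}$; this is the bookkeeping that produces the ``$-$'' and the ``$q-$'' in the two formulas. (2) Use Lemma~\ref{Lem:sur-inj-maps1} and Corollary~\ref{Cor:surj-inj-maps2}: the map $\minmap{\mu}_n$ is surjective iff its transpose is split injective, and split injective iff its transpose is surjective. Combining (1) and (2), the set of degrees $n$ for which $\minmap{\mu}_{n+q}$ is surjective corresponds bijectively, under $n\mapsto$ (a reflected, shifted index), to the set of degrees for which the corresponding component of $\minmap{\mu}^{*}$ is split injective, and dually. (3) Translate the $\inf$ defining $\crdeg{}{R}{\comp{C}}$ into a $\sup$ defining $\cocrdeg{}{R}{\dual{C}}$ and vice versa: ``surjective for all $i>n$'' becomes ``split injective for all $i<$ (reflected $n$),'' with the reflection point determined by the indexing constants from step (1), giving $\cocrdeg{}{R}{\dual{C}}=q-\crdeg{}{R}{\comp{C}}$ for the endomorphism $\mu$ realizing $\crdeg{}{R}{\comp{C}}=s$, and $\crdeg{}{R}{\dual{C}}=-t$ for the endomorphism $\nu$ realizing $\cocrdeg{}{R}{\comp{C}}=t$. (4) Handle the passage from a single realizing endomorphism to the infimum/supremum over all endomorphisms: the assignment $\mu\mapsto\mu^{*}$ is a bijection between chain endomorphisms of $\comp{C}$ (of each degree $q$) and those of $\dual{C}$, and it exchanges the ``$\crdeg{\mu}{}{}$'' and ``$\cocrdeg{\mu}{}{}$'' data as in step (3); taking $\inf$/$\sup$ over all $\mu$ on both sides then yields the two displayed equalities, including the degenerate cases $\pm\infty$.

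I expect the main obstacle to be step (1) together with the $\inf$/$\sup$ reconciliation in step (3): getting the degree-shift constant exactly right (so that the reflection point lands at $0$ in one formula and at $q$ in the other, matching the $-1$ phenomenon already flagged for modules in Proposition~\ref{Prop: Rdual and Cocrdeg}), and making sure the quantifier flip ``$\forall i>n$'' $\leftrightarrow$ ``$\forall i<n'$'' is carried out with the correct endpoint, is where an off-by-one error would most naturally creep in. Everything else — the transpose/split-injective dictionary, the homotopy invariance, and the bijection on endomorphisms — is essentially formal given Lemmas~\ref{Lem:sur-inj-maps1}, Corollary~\ref{Cor:surj-inj-maps2} and Proposition~\ref{Prop:hom-stable}. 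I would also double-check that $\crdeg{}{R}{\dual{C}}=-\cocrdeg{}{R}{\comp{C}}$ (rather than something shifted) really does have reflection point $0$: this should follow because the ``$t$'' in the cocritical degree and the ``$-t$'' in the critical degree of the dual are related by the same involution that sends $\dual{(\dual{C})}\cong\comp{C}$, so applying the pair of formulas twice must return the identity, which is a useful internal consistency check to include at the end of the argument.
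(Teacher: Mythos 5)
Your outline follows the same route as the paper's own argument: use the degree-reversing identification $C^*_n\cong C_{-n}$, the bijection $\mu\leftrightarrow\mu^*$ between chain endomorphisms of $\comp{C}$ and of $\dual{C}$, and Lemma~\ref{Lem:sur-inj-maps1} together with Corollary~\ref{Cor:surj-inj-maps2} to trade surjectivity for split injectivity. So the strategy is exactly right. However, there are two concrete issues you should resolve before declaring the proof complete.

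First, a small indexing slip in your step (1). With $\mu^*$ of degree $-q$, its component at degree $n$ of $\dual{C}$ must land in $C^*_{n-q}$, not $C^*_{n+q}$. Explicitly, $(\mu_{q-n})^T\colon (C_{-n})^*\to (C_{q-n})^*$, i.e.\ $C^*_n\to C^*_{n-q}$, so $\mu^*_n=(\mu_{q-n})^T$ with target degree $n-q$. As written, your display has the arrow going the wrong way in degree, which is precisely the sort of thing you flagged as dangerous.

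Second, and more substantively, your proposed ``apply-the-formulas-twice'' consistency check is not merely a sanity check you might add at the end — if you actually run it, it shows that the second displayed formula in the statement cannot be right as printed. Suppose $\crdeg{}{R}{\comp{C}}=s$ with realizing endomorphism of degree $q$, and $\cocrdeg{}{R}{\comp{C}}=t$. Using $\dual{(\dual{C})}\cong\comp{C}$ and the first formula (whose proof is unproblematic), one gets $\crdeg{}{R}{\comp{C}}=-\cocrdeg{}{R}{\dual{C}}$. If the second formula held, $\cocrdeg{}{R}{\dual{C}}=q-s$, and hence $s=\crdeg{}{R}{\comp{C}}=-(q-s)=s-q$, forcing $q=0$, which fails for any genuine degree-$(-q)$ endomorphism such as the CI operators (where $q=2$). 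The source of the discrepancy is an off-by-$q$ error in the index chase: Definition~\ref{Def:crdeg} requires $\mu_{i+q}\colon C_{i+q}\twoheadrightarrow C_i$ for all $i>s$, i.e.\ the condition is on the \emph{target} degree $i$, so $\mu_j$ is surjective iff $j>s+q$. Then $\mu^*_n=(\mu_{q-n})^T$ is split injective iff $q-n>s+q$, i.e.\ iff $n<-s$, which gives $\cocrdeg{}{R}{\dual{C}}=-\crdeg{}{R}{\comp{C}}$, not $q-\crdeg{}{R}{\comp{C}}$. (The paper's proof writes ``which occurs for all $-n+q>s$,'' comparing the \emph{subscript} rather than the target degree to $s$; that is where the extra $q$ enters.) This corrected form is also what you get by chaining Proposition~\ref{Prop: Rdual and Cocrdeg} with the degree-$1$ shift $\shiftq{-1}{\comp{C}^*}\twoheadrightarrow M^*$ recorded in Lemma~\ref{Lem:depth-codepth}, and it does satisfy your involution check. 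So your plan is sound, but carrying out step (3) carefully will lead you to a formula that differs from the one in the proposition by $q$; you should flag that rather than force agreement with the printed statement.
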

\begin{proof}
	First, it should be clear that there is a one-to-one correspondence between endomorphisms on $\comp{C}$ and endomorphisms on $\dual{C}$, as $\Hom_R(\Hom_R(\comp{C},R),R)\cong \comp{C}$ for any $R$-complex $\comp{C}$ in $\Ktac{R}$. Hence, the induced endomorphisms $\mu^*$ and $\nu^*$ must realize the \crco on $\dual{C}$. Apply Corollary \ref{Cor:surj-inj-maps2} to see that $\nu^*_{n+r}=(\nu_{-n})^T$ is surjective whenever $\nu_{-n}$ is split injective, which occurs for all $-n<t$ and thus for all $n>-t$. Then apply Lemma \ref{Lem:sur-inj-maps1} to see that $\mu^*_n=(\mu_{-n+q})^T$ is split injective whenever $\mu_{-n+q}$ is surjective, which occurs for all $-n+q>s$ and thus for all $n<q-s$.
\end{proof}

We now consider how the \crco of an $R$-complex $\comp{C}$ change under the translation endofunctor, $\shift{}\!\!:\Ktac{R}\to\Ktac{R}$. Recall that $\shiftq{n}{\comp{C}}$ denotes the $R$-complex with $R$-modules $(\shiftq{n}{\comp{C}})_m=C_{m-n}$ and differentials $\doug^{\shiftq{n}{\comp{C}}}_m=(-1)^n\doug^{\comp{C}}_{m-n}$. 
\begin{prop}\label{Prop:crco-shifts}
	If $\crdeg{}{R}{\comp{C}}=s$ and $\cocrdeg{}{R}{\comp{C}}=t$, then $\crdeg{}{R}{\shiftq{n}{\comp{C}}}=s+n$ and $\cocrdeg{}{R}{\shiftq{n}{\comp{C}}}=t+n$.
\end{prop}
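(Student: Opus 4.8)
The plan is to push everything through the translation endofunctor $\shiftq{n}{}$, which is an autoequivalence of $\Ktac{R}$, and then reindex. The first step is to record the compatibility of shifting with the minimal decomposition. Writing $\comp{C}=\mincomp{C}\oplus\comp{T}$, a null-homotopy for $\map{1}^{\comp{T}}$ shifts to one for $\map{1}^{\shiftq{n}{\comp{T}}}$, so $\shiftq{n}{\comp{T}}$ is again contractible; and the differential of $\shiftq{n}{\mincomp{C}}$, which after reindexing is $(-1)^n$ times that of $\mincomp{C}$, still lands in $\m\shiftq{n}{\mincomp{C}}$, so $\shiftq{n}{\mincomp{C}}$ is minimal. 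By the uniqueness of the minimal subcomplex recalled in Section \ref{sec:Prelim}, $\overline{\shiftq{n}{\comp{C}}}=\shiftq{n}{\mincomp{C}}$, with $(\shiftq{n}{\mincomp{C}})_m=\minmod{C}_{m-n}$.

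The second step is the bijection on endomorphisms: since $\shiftq{n}{}$ is invertible and $\shiftq{n}{\shiftq{q}{\comp{C}}}=\shiftq{q}{(\shiftq{n}{\comp{C}})}$, the assignment $\mu\mapsto\shiftq{n}{\mu}$ is a bijection from chain endomorphisms $\mu\!:\comp{C}\to\shiftq{q}{\comp{C}}$ onto chain endomorphisms $\shiftq{n}{\comp{C}}\to\shiftq{q}{(\shiftq{n}{\comp{C}})}$, of the same degree $-q$. It commutes with passage to induced maps on minimal subcomplexes: $\overline{\shiftq{n}{\mu}}_m$ agrees with $\minmap{\mu}_{m-n}$ up to the sign $(-1)^n$, which is irrelevant to surjectivity and to split injectivity.

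With these in hand the statement is bookkeeping. Fix $\mu\!:\comp{C}\to\shiftq{q}{\comp{C}}$ and set $\mu'=\shiftq{n}{\mu}$. Under the substitution $j=i-n$ together with the identifications above, the condition ``$\overline{\mu'}_{i+q}\!:(\shiftq{n}{\mincomp{C}})_{i+q}\twoheadrightarrow(\shiftq{n}{\mincomp{C}})_i$ for all $i$ above a threshold'' is exactly the condition ``$\minmap{\mu}_{j+q}\!:\minmod{C}_{j+q}\twoheadrightarrow\minmod{C}_j$ for all $j$ above the threshold lowered by $n$''. Hence $\crdeg{\mu'}{R}{\shiftq{n}{\comp{C}}}=\crdeg{\mu}{R}{\comp{C}}+n$, and taking the infimum over all endomorphisms (using the bijection) gives $\crdeg{}{R}{\shiftq{n}{\comp{C}}}=s+n$. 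The identical argument with ``$\twoheadrightarrow$'' replaced by ``splits'' and ``infimum'' by ``supremum'' gives $\cocrdeg{}{R}{\shiftq{n}{\comp{C}}}=t+n$; the degenerate cases where $s$ or $t$ is $\pm\infty$ are immediate, since $\mu\mapsto\shiftq{n}{\mu}$ preserves the existence of an endomorphism witnessing a finite relative (co)critical degree.

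I expect no genuine obstacle here — the content is the reindexing. The two points to watch are: that $\shiftq{n}{\shiftq{q}{\comp{C}}}=\shiftq{q}{(\shiftq{n}{\comp{C}})}$, so that shifting leaves the degree of the witnessing endomorphism unchanged; and that the sign $(-1)^n$ occurring both in $\doug^{\shiftq{n}{\comp{C}}}$ and in $\overline{\shiftq{n}{\mu}}$ plays no role in any surjectivity or split-injectivity assertion. The only conceptual ingredient is the identity $\overline{\shiftq{n}{\comp{C}}}=\shiftq{n}{\mincomp{C}}$ from the first step, which rests on uniqueness of minimal complexes up to isomorphism.
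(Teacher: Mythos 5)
Your proof is correct and follows essentially the same route as the paper's: reduce to the minimal subcomplex (noting it commutes with $\shiftq{n}{}$), use the bijection $\mu\mapsto\shiftq{n}{\mu}$ on endomorphisms, and reindex the surjectivity/split-injectivity thresholds by $n$. The only cosmetic difference is that you flag a sign $(-1)^n$ on $\shiftq{n}{\mu}$, which in the paper's conventions does not actually appear (the sign lives only in the shifted differential) — but since you correctly observe it would be irrelevant anyway, this is harmless.
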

\begin{proof}
	We begin with assuming $\comp{C}$ is minimal, since the minimal subcomplex of any complex would coincide with a shift of itself under the translation functor. Note that there is a one-to-one correspondence between endomorphisms on $\comp{C}$ and those on $\shiftq{n}{\comp{C}}$. Now suppose $\map{\mu}\!:\comp{C}\to\shiftq{q}{\comp{C}}$ is the endomorphism which realizes the critical degree on $\comp{C}$. %, then $\shiftq{n}{\mu}\!:\shiftq{n}{\comp{C}}\to\shiftq{n+q}{\comp{C}}$ will be the endomorphism which realizes the critical degree on $\shiftq{n}{\comp{C}}$. 
	Since $(\shiftq{n}{\mu})_i=\mu_{i-n}\!: C_{i-n}\to C_{i-n-q}$ note that $\mu_{i+q}\!:C_{i+q}\to C_i$ surjective for all $i>s$ implies $(\shiftq{n}{\mu})_{i+q}\!:C_{i+q-n}\to C_{i-n}$ will be surjective for all $i>n+s$. Moreover, $s+n$ will be the \emph{least} degree such that $(\shiftq{n}{\mu}_{i+q})$ is surjective for all $i>s+n$ (since $s$ is the least degree such that $\mu_{i+q}$ is surjective for all $i>s$). Hence, $\crdeg{}{Q}{\shiftq{n}{\comp{C}}}=s+n$. Likewise, if $\map{\nu}\!:\comp{C}\to\shiftq{r}{\comp{C}}$ is the endomorphism which realizes the cocritical degree on $\comp{C}$, then $\shiftq{n}{\nu}\!:\shiftq{n}{\comp{C}}\to\shiftq{n+r}{\comp{C}}$ will be such that it realizes the cocritical degree on $\shiftq{n}{\comp{C}}$. Therefore, $(\shiftq{n}{\nu})_i$ will be split injective for all $i<t+n$ and $\cocrdeg{}{R}{\shiftq{n}{\comp{C}}}=t+n$. 
\end{proof}

\subsection{Critical Degrees, Complexes, and Modules}
Starting with a MCM module $M$, our definition \ref{Def:crdeg} agrees with \ref{Def:crdeg-modules} as long as $\crdeg{}{R}{M}\geq 0$. Specifically, if $\cres{C}{F}{M}$ is the minimal complete resolution of $M$ with $\crdeg{}{R}{M}\geq 0$ and $\crdeg{}{R}{M^*}\geq 0$, then $\crdeg{}{R}{\comp{C}}=\crdeg{}{R}{M}$ and $\cocrdeg{}{R}{\comp{C}}=-\crdeg{}{R}{M^*}-1$. On the other hand, if $M$ is \emph{not} MCM, then
\begin{equation*}
\begin{cases}
\crdeg{}{R}{\comp{C}}=s-g & \text{ if } s\geq 0\\
\cocrdeg{}{R}{\comp{C}}=s^*-g^* & \text{ if } s^*\geq 0
\end{cases}
\end{equation*}
where $\crdeg{}{R}{M}=s$ and $g=\dim R-\depth_RM$ (with $s^*$ and $g^*$ analogously defined for $M^*$). %Again, it should be noted that these statements hold \emph{only} in the cases for which $s\geq 0$ and $s^*\geq0$. 
Observe that, in these cases, $\crdeg{}{R}{\comp{C}}>0$ if $g<s$, and $\cocrdeg{}{R}{\comp{C}}<0$ if $g^*>s^*$. Alternatively, whenever we consider an $R$-module $M$ with $\crdeg{}{R}{M}=-1$ it could be the case that $\crdeg{}{R}{\comp{C}}\lneq -1$, as demonstrated in the following example. 
\begin{example}
	Let $M$ be the $R$-module with complete resolution $\cres{C}{F}{M}$ and further suppose that $0\leq\crdeg{}{R}{M}=s<\infty$. For simplicity, assume also that $M$ is MCM. Now set $N=\Omega^{s+k}M$ for some fixed integer $k>1$, so that $\crdeg{}{R}{N}=-1$. This is because $\mu(F_{n+q})=F_n$ for some $\mu\!:\res{F}\to\shiftq{q}{\res{F}}$ and for all $n>s$ by assumption, but $\res{G}:=\res{F}^{>s+k}$ is the minimal free resolution of $N$. Thus, $G_n=F_{n+s+k}$ so it should be clear that $\mu(G_{n+q})=G_{n}$ for all $n>0$ since $n+s+k>s$. However, note that $N^*=\Hom_R(\Omega^{s+k}M,R)\cong\Omega^{s+k}\Hom_R(M,R)$ and, furthermore, we can complete the chain endomorphism on $\res{F}$ realizing the critical degree of $M$ to a chain map on $\comp{C}$ ($\!$\cite[1.5]{taa}). Of course, there also exists a complete resolution of $N$ of the form $\cres{C}{G}{N}$, which is equivalent to $\cres{C}{\res{F^{\mathit{>s+k}}}}{ \Omega^{s+k}M}$ (up to isomorphism in the first two components and up to homotopy in the last). Therefore, we see that $\crdeg{}{R}{C}\leq-k$ where $-k\lneq -1$ by assumption. $\diamond$
\end{example}
The previous example demonstrates that Definition \ref{Def:crdeg-modules} does not always agree with Definition \ref{Def:crdeg} as $\res{F}$ is a bounded below $R$-complex and so the former definition does not capture what happens in the cosyzygy sequence of $M$. %\RA{However, one weakness of Definition \ref{Def:crdeg} is that it depends on an arbitrary labeling of homological degrees determined by $M=\im\doug^{\comp{C}}_0$. Include?}

\begin{prop}\label{Prop: cocr>cr}
	Let $\comp{C}$ be a minimal complex in $\Ktac{R}$ and $\mu\in\Hom_{\CK(R)}(\comp{C},\shiftq{q}{\comp{C}})$. If $\crdeg{\mu}{R}{\comp{C}}\leq\cocrdeg{\mu}{R}{\comp{C}}$, then one of the following must be true:
	\begin{enumerate}
		\item $0\leq\cocrdeg{\mu}{R}{\comp{C}}-\crdeg{\mu}{R}{\comp{C}}<2+q$, or
		\item $\comp{C}$ is periodic.
	\end{enumerate}
\end{prop}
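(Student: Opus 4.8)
The plan is to push the hypothesis down to the components of a chain representative of $\mu$ and then through the short exact sequences presenting $\comp C$ by its syzygies, so that the maps $\mu$ induces on syzygies become honest $R$-module isomorphisms over a band of degrees of width $\cocrdeg{\mu}{R}{\comp C}-\crdeg{\mu}{R}{\comp C}-q$; since one such isomorphism already forces $\comp C$ to be periodic, non-periodicity collapses the band and bounds the difference. Write $s=\crdeg{\mu}{R}{\comp C}$ and $t=\cocrdeg{\mu}{R}{\comp C}$, so $s\le t$. First I would dispose of the infinite cases. If $t=+\infty$, every component of $\mu$ is split injective, so $\rk{C_m}\le\rk{C_{m-q}}$ for all $m$ and, for each residue $r$ mod $q$, the sequence $(\rk{C_{r+kq}})_{k\in\Z}$ is non-increasing in $k$, hence constant for $k\gg 0$; thus $\mu_m\colon C_m\to C_{m-q}$ is an isomorphism for $m\gg 0$, and (by the argument below for a single syzygy isomorphism) $\comp C$ is periodic, which is alternative (2). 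Dually, $s=-\infty$ makes $\mu_m$ an isomorphism for $m\ll 0$ and again $\comp C$ is periodic. So assume $s,t\in\Z$. As $\comp C$ is minimal we have $\minmap{\mu}=\mu$, any two chain representatives of $\mu$ agree modulo $\m$ (so the conditions in Definitions \ref{Def:crdeg}--\ref{Def:cocrdeg} depend only on the class of $\mu$), and the defining sets for $s$ and $t$ are respectively upward and downward closed in $\Z$, so the infimum and supremum are attained: a chosen chain representative satisfies $\mu_m\colon C_m\to C_{m-q}$ surjective for all $m>s+q$ and split injective for all $m<t$.

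For $n\in\Z$ set $\Omega^{n}:=\im\doug^{\comp C}_{n+1}=\Ker\doug^{\comp C}_{n}$, using total acyclicity, so that $0\to\Omega^{n}\to C_{n}\xrightarrow{\doug_{n}}\Omega^{n-1}\to 0$ is exact with all terms syzygies of $\comp C$. Since $\mu$ is a chain map, $\mu_{n}(\Omega^{n})\subseteq\Omega^{n-q}$, so $\mu$ restricts to $R$-maps $\nu_{n}\colon\Omega^{n}\to\Omega^{n-q}$ which, together with the $\mu_{n}$, form a morphism of these short exact sequences (from degree $n$ to degree $n-q$). Applying the snake lemma to these morphisms, surjectivity of $\mu_{n}$ forces surjectivity of $\nu_{n-1}$, and injectivity of $\mu_{n}$ forces injectivity of $\nu_{n}$. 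Combined with the previous paragraph, $\nu_{m}$ is surjective for all $m\ge s+q$ and injective for all $m<t$, hence an isomorphism of $R$-modules whenever $s+q\le m\le t-1$.

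Now suppose $\nu_{m_{0}}$ is an isomorphism for some integer $m_{0}$. Then $\Omega^{m_{0}}\cong\Omega^{m_{0}-q}$, and since $\Omega^{m_{0}-q}=\Ker\doug^{\comp C}_{m_{0}-q}=\Ker\doug^{\shiftq{q}{\comp C}}_{m_{0}}$, the minimal totally acyclic complexes $\comp C$ and $\shiftq{q}{\comp C}$ have isomorphic $m_{0}$-th syzygy modules. By uniqueness of minimal complete resolutions --- equivalently, by the one-to-one correspondence recorded in Section \ref{sec:Prelim} and the equivalence $\Ktac{R}\homeq\stMCM{R}$ of \cite{mcm-tate} --- it follows that $\comp C\cong\shiftq{q}{\comp C}$, so $\im\doug^{\comp C}_{n}\cong\im\doug^{\comp C}_{n-q}$ for all $n$ and $\comp C$ is periodic. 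Therefore, if $\comp C$ is not periodic, no $\nu_{m}$ can be simultaneously surjective and injective, so the interval $[s+q,\,t-1]$ contains no integer; that is, $t-s\le q$. With $t-s\ge 0$ and $q<q+2$, this is precisely alternative (1) --- indeed the argument establishes the sharper bound $\cocrdeg{\mu}{R}{\comp C}-\crdeg{\mu}{R}{\comp C}\le q$.

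The snake-lemma diagram chase, the well-definedness of the two conditions on the homotopy class of $\mu$, and the translation of the $\inf$ and $\sup$ in the definitions into the degreewise statements ``$\mu_m$ surjective for $m>s+q$'' and ``$\mu_m$ split injective for $m<t$'' should all go through without surprise; getting the endpoints $s+q\le m\le t-1$ of the band exactly right is what pins down the numerical constant. The step requiring genuine care is the implication ``$\Omega^{m_{0}}\cong\Omega^{m_{0}-q}\Rightarrow\comp C$ periodic as a two-sided complex'', which leans on uniqueness of minimal complete resolutions; here it matters that the snake lemma delivers an honest isomorphism of modules $\nu_{m_{0}}$, not merely a stable one, so nothing further must be repaired. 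The reduction of the infinite cases follows the same template but should also be spelled out.
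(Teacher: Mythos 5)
Your proof is correct and reaches the same milestone as the paper --- a single syzygy isomorphism $\Omega^{m_0}\cong\Omega^{m_0-q}$ forces $\comp{C}\cong\shiftq{q}{\comp{C}}$ --- but the route there differs in two ways worth noting. First, you extract the syzygy isomorphism via the snake lemma applied to the morphism of presentations $0\to\Omega^n\to C_n\to\Omega^{n-1}\to 0$, which needs only $\mu_{m+1}$ surjective together with $\mu_m$ injective to make $\nu_m$ an isomorphism. The paper instead insists on a single \emph{bijective} component $\mu_{t_\mu-1}$ and then runs an explicit image/kernel chase; this requires $t_\mu-s_\mu\ge q+2$, whereas your version only requires $t_\mu-s_\mu\ge q+1$. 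Consequently your argument actually establishes the sharper bound $\cocrdeg{\mu}{R}{\comp C}-\crdeg{\mu}{R}{\comp C}\le q$ in the non-periodic case (the paper's stated constant $2+q$ is not tight). Second, once you have $\Omega^{m_0}\cong\Omega^{m_0-q}$, you conclude periodicity of the whole two-sided complex directly from uniqueness of minimal complete resolutions (equivalently the equivalence $\Ktac{R}\simeq\stMCM{R}$), while the paper derives one-sided periodicity of the free resolution of $\im\doug_{s_\mu+1}$, dualizes to treat the other side, and then ``reconstructs'' $\comp C$. Your route is cleaner and avoids the dualization. You also handle the infinite cases $s_\mu=-\infty$ and $t_\mu=+\infty$ explicitly (via rank monotonicity along residue classes modulo $q$), which the paper elides; and your observation that minimality of $\comp C$ makes the degreewise surjectivity/injectivity conditions independent of the chosen chain representative of the homotopy class is a legitimate and necessary well-definedness check. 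One small point of care that your write-up glosses over: the restriction $\mu_n\vert_{\Omega^n}$ and the induced map on $\Omega^{n-1}$ commute with the middle map only up to the sign $(-1)^q$ coming from $\doug^{\shiftq{q}{\comp{C}}}_n=(-1)^q\doug^{\comp{C}}_{n-q}$; this has no effect on kernels or cokernels, so the snake lemma applies, but it would be worth one line to say so.
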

\begin{proof}
	Set $\crdeg{\mu}{R}{C}=s_{\mu}$ and $\cocrdeg{\mu}{R}{C}=t_{\mu}$ such that $t_{\mu}\geq s_{\mu}$. To demonstrate (1), we show that $t_{\mu}-s_{\mu}\geq 2+q$ forces (2). First, let us suppose equality so that $t_{\mu}= s_{\mu}+(2+q)$ thus implying $\mu_n$ is split injective for all $n<s_{\mu}+(2+q)$ and $\mu_{n+q}$ is surjective for all $n>s_{\mu}$. In particular, $\mu_{t_{\mu}-1}\!:C_{t_{\mu}-1}\to C_{t_{\mu}-1-q}$ is bijective, as $t_{\mu}-1=s_{\mu}+q+1$ and $t_{\mu}-1-q=s_{\mu}+1$. \\
	
	\begin{center}\small
		$\xymatrix @R=1.25cm @C=1.5cm {%\cdots  \ar[r] %& C_{(s_{\mu}+3)+q} \ar@{->>}[d]_{u_{(s_{\mu}+3)+q}} \ar[r]^{\doug_{(s_{\mu}+3)+q}}  
			%& 
			\cdots \ar[r]^(.35){\doug_{(s_{\mu}+3)+q}} & C_{(s_{\mu}+2)+q} \ar@{->>}[d]^{\mu_{(s_{\mu}+2)+q}} \ar[r]^(.4){\doug_{(s_{\mu}+2)+q}} & C_{t_{\mu}-1}=C_{(s_{\mu}+1)+q}   \ar@{^{(}->>}[d]^{\mu_{t_{\mu}-1}=\mu_{(s_{\mu}+1)+q}}_{\cong} \ar[r]^(.6){\doug_{(s_{\mu}+1)+q}} & C_{s_{\mu}+q}   \ar@{^{(}->}[d]^{\mu_{s_{\mu}+q}} \ar[r]^{\doug_{s_{\mu}+q}} %& C_{(s_{\mu}-1)+q}   \ar@{^{(}->}[d]_{u_{s_{\mu}-1+q}} \ar[r] 
			& \cdots \\
			%\cdots \ar[r] %& C_{s_{\mu}+3}  \ar[r]^{\doug_{s_{\mu}+3}} 
			%& 
			\cdots \ar[r]^(.45){\doug_{s_{\mu}+3}} &  C_{s_{\mu}+2}  \ar[r]^(.35){\doug_{s_{\mu}+2}} & C_{t_{\mu}-1-q}=C_{s_{\mu}+1}  \ar[r]^(.6){\doug_{s_{\mu}+1}} & C_{s_{\mu}}  \ar[r]^(.55){\doug_{s_{\mu}}} %& C_{s_{\mu}-1}  \ar[r] 
			&\cdots 
		}$ 		
	\end{center} \vspace{2mm} \normalsize%\\
	First note that, by commutativity of the diagram, %($\mu_{(s_{\mu}+1)+q}\doug_{(s_{\mu}+2)+q}=\doug_{s_{\mu}+2}\mu_{(s_{\mu}+2)+q}$),
	$$\mu_{(s_{\mu}+1)+q}(\im\doug_{(s_{\mu}+2)+q})%=\doug_{s_{\mu}+2}\mu_{(s_{\mu}+2)+q}
	\subseteq\im\doug_{s_{\mu}+2}=\ker\doug_{s_{\mu}+1}.$$ Conversely, surjectivity of $\mu_{(s_{\mu}+2)+q}$ tells us that $$\ker\doug_{s_{\mu}+1}=\im\doug_{s_{\mu}+2}\subseteq\mu_{(s_{\mu}+1)+q}(\im\doug_{(s_{\mu}+2)+q})$$ since for any $x\in \im\doug_{s_{\mu}+2}$ there exists a $y\in C_{(s_{\mu}+2)+q}$ such that $$\doug_{s_{\mu}+2}\mu_{(s_{\mu}+2)+q}(y)=x=\mu_{(s_{\mu}+1)+q}\doug_{(s_{\mu}+2)+q}(y).$$Hence, $\im\doug_{(s_{\mu}+2)+q}=\ker\doug_{(s_{\mu}+1)+q}\cong\ker\doug_{s_{\mu}+1}=\im\doug_{(s_{\mu}+2)}$ as $\mu_{(s_{\mu}+1)+q}$ is a $Q$-module isomorphism. %\footnote{Roughly speaking, we mean to say that one can ``replace'' $\Omega^{1+q}M$ with $\Omega^{1}M$ in the reconstruction of $\res{F}$, and can do so ad infimum; it is of course the unique construction of minimal resolutions which allows us to say that this reconstruction must be the same as the original $\res{F}$, and thus the truncated complex $\comp{C^{>s_{\mu}}}$.} 
	Now, set $M=\im\doug_{s_{\mu}+1}$, noting that the truncated complex $\comp{C}^{>s_{\mu}}$ is degree-wise bijective with the minimal resolution $\res{F}$ of $M$. Further note that $\Syz{1}{M}=\im\doug_{(s_{\mu}+2)}\cong\im\doug_{(s_{\mu}+2)+q}=\Syz{1+q}{M}$ and so, by uniqueness of $\res{F}$, $\Syz{1+q+n}{M}\cong\Syz{1+n}{M}$ for any $n\in\N$. Now, apply a similar argument to $M^*=\Hom_R(M,R)$ noting that $\Hom_R(\mu_{(s_{\mu}+1)+q},R)$ will likewise be an $R$-module isomorphism from $\Hom_R(C_{s_{\mu}+1},Q)\cong C_{s_{\mu}+1}$ to  $\Hom_R(C_{(s_{\mu}+1)+q},R)\cong C_{(s_{\mu}+1)+q}$. Then, $\Hom_R(\comp{C}^{>s_{\mu}},R)$ is periodic and thus $\comp{C}^{<s_{\mu}}$ is periodic as well. From here, reconstruction of $\comp{C}$ from $\comp{C}^{<s_{\mu}}$ and $\comp{C}^{>s_{\mu}}$ demonstrates that $\comp{C}$ is periodic of period $q$, forcing $\crdeg{}{R}{\comp{C}}=-\infty$ and $\cocrdeg{}{R}{\comp{C}}=\infty$ by Proposition \ref{Prop: periodic}. Note that this argument involved \emph{at least one} degree-wise isomorphism, so it is straightforward to see that the same arguments holds for the case in which $t_{\mu}-s_{\mu}>2+q$.
\end{proof}

\begin{cor}\label{Cor: periodic crco}
	If $\crdeg{}{R}{\comp{C}}$ and $\cocrdeg{}{R}{\comp{C}}$ are realized by the same degree $q$ endomorphism, then $\crdeg{}{R}{\comp{C}}\leq\cocrdeg{}{R}{\comp{C}}-(2+q)$ if and only if $\crdeg{}{R}{\comp{C}}=-\infty$ and $\cocrdeg{}{R}{\comp{C}}=\infty$.
\end{cor}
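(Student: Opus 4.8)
The plan is to read this off directly from Proposition~\ref{Prop: cocr>cr} and Proposition~\ref{Prop: periodic}; the corollary packages those two results and adds no genuinely new content. Let $\mu\colon\comp{C}\to\shiftq{q}{\comp{C}}$ be the common endomorphism realizing both invariants, and put $s=\crdeg{\mu}{R}{\comp{C}}=\crdeg{}{R}{\comp{C}}$ and $t=\cocrdeg{\mu}{R}{\comp{C}}=\cocrdeg{}{R}{\comp{C}}$. By Proposition~\ref{Prop:hom-stable} neither invariant changes if we pass to the minimal subcomplex $\mincomp{C}$ (and $\mu$ induces on it an endomorphism of the same degree realizing the same relative degrees, by Lemma~\ref{Lem:induced-map}), so we may and do assume $\comp{C}$ is minimal, which is exactly the hypothesis needed to invoke Proposition~\ref{Prop: cocr>cr}.

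For the forward implication, assume $s\le t-(2+q)$. Since $q\in\Z^{+}$ we have $2+q>0$, so in particular $s\le t$ and the hypothesis of Proposition~\ref{Prop: cocr>cr} is met; hence one of its two alternatives occurs. Alternative~(1) asserts $0\le t-s<2+q$, which is impossible here: if $s$ and $t$ are both finite this contradicts $t-s\ge 2+q$ outright, and if either is infinite then $t-s=\infty$, again violating the strict bound. Therefore alternative~(2) holds, i.e.\ $\comp{C}$ is periodic, and Proposition~\ref{Prop: periodic} applied to the minimal periodic complex $\comp{C}$ gives $\crdeg{}{R}{\comp{C}}=-\infty$ and $\cocrdeg{}{R}{\comp{C}}=\infty$, as wanted.

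The converse is immediate: if $\crdeg{}{R}{\comp{C}}=-\infty$ and $\cocrdeg{}{R}{\comp{C}}=\infty$, then in the extended reals $t-(2+q)=\infty\ge-\infty=s$ for every $q$, so $\crdeg{}{R}{\comp{C}}\le\cocrdeg{}{R}{\comp{C}}-(2+q)$ trivially. I expect the only point requiring any attention is the bookkeeping with $\pm\infty$ — confirming that the strict gap condition in Proposition~\ref{Prop: cocr>cr}(1) is genuinely excluded once a gap of size at least $2+q$ is present, including the degenerate cases — together with checking that the reduction to the minimal subcomplex preserves the relevant data; both are covered by the results assembled above, so there is no substantive obstacle.
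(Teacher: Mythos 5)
The paper states this corollary without proof, intending it to follow directly from Proposition~\ref{Prop: cocr>cr} and Proposition~\ref{Prop: periodic}, and your argument assembles exactly those two results in the intended way. Your reduction to the minimal subcomplex and the extended-real bookkeeping are both sound (the one case that would cause trouble, $\cocrdeg{}{R}{\comp{C}}=-\infty$ with $\crdeg{}{R}{\comp{C}}=-\infty$, cannot occur over a complete intersection by Corollary~\ref{Cor:ZeroCrCo} and Proposition~\ref{Prop:cocr-cohom}), so the proof is correct and matches the paper's approach.
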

As a consequence of these results, two questions arise:
\begin{enumerate}[i.]
	\item \emph{Are the \crco of a totally acyclic $R$-complex always realized by the \textbf{same} endomorphism?}
	\item \emph{If not, then is it possible that a \textbf{non}-periodic minimal complex has non-infinite cocritical degree larger than the critical degree by an unbounded amount?}
	%\item[(iii.)] \emph{In the case that the answer to (ii.) is in the affirmative, is this amount unlimited?}
\end{enumerate}
%We explore these questions in greater detail in the next section.
%\Rebe{Is there a better answer using the cohomological characterization? For example, if we assume $t=\cocrdeg{}{R}{\comp{C}}>\crdeg{}{R}{\comp{C}}=s$, then there exists $0\neq g\in\soc(E)$ with $\deg(g)=s$ meaning $g\chi=0$ for some $\chi\in\x$. Additionally, since $g\in E^{<t}$ there is some $g'$ with $\deg(g')=s-2$ such that $g=g'\chi'$ for $\chi'\in\x$. Hence, $g'\chi'\chi=0$ meaning $g'$ is annihilated be something in $\chi^2$ but not $\chi$. Does this imply any sort of contradiction?} 

\section{Tate Cohomology and Critical Degree}\label{sec:cohomchar}
In this section, we have two primary goals: the first is to give an appropriate analogue for \cite[Theorem 7.2]{cid} and the second is to provide a partial answer to question (i.) listed at the end of the previous section. The importance of generalizing the notion of critical degree to $\Ktac{R}$ comes in the form of its triangulated structure, as well as the equivalency of categories $$\Ktac{R}\simeq\stMCM{R}$$ as each finitely generated $R$-module has a MCM approximation ($\!\!$\cite{mcm-tate}). It is then unsurprising that we may relate critical degree to $$\Tateext_R^n(M,\k)=\H^n\Hom_R(\comp{C},\k)$$ equipped with comparison homomorphisms $$\varepsilon_R^n(M,\k):\Ext^n_R(M,\k)\to\Tateext_R^n(M,\k)$$ as described in \cite[\S 5]{rel} (cf.\cite{mcm-tate}, \cite{gorproj-comp}, \cite{pten}).  \begin{lemma}\label{Lem:Ext-compVmod}
	Let M be maximal Cohen-Macaulay. Given complete resolutions $\compK\xrightarrow{\rho}\res{F}\to \k$ and  $\comp{C}\to\res{G}\to M$, there exists an isomorphism $$\Ext_{\CK(R)}^n(\comp{C},\compK)\cong\Tateext_{R}^n(M,\k)$$
	for each $n\in\Z$.
\end{lemma}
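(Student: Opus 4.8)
The plan is first to trade both sides of the claimed isomorphism for their ``homotopy category'' incarnations, and then to compare these through the defining diagram of the complete resolution of $\k$. On the left, since $\Ktac{R}$ is a full triangulated subcategory of the homotopy category $\CK(R)$ and, in any triangulated category, $\Ext^n$ is a shifted $\Hom$, we have
\[
\Ext_{\CK(R)}^n(\comp{C},\compK)=\Hom_{\CK(R)}(\comp{C},\shiftq{n}{\compK})=\Hom_{\Ktac{R}}(\comp{C},\shiftq{n}{\compK}).
\]
On the right, by definition $\Tateext_R^n(M,\k)=\H^n\Hom_R(\comp{C},\k)$, the cohomology of the $\Hom$-complex. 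So the whole task reduces to constructing, naturally in $n\in\Z$, an isomorphism $\Hom_{\Ktac{R}}(\comp{C},\shiftq{n}{\compK})\xrightarrow{\sim}\H^n\Hom_R(\comp{C},\k)$.

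For the comparison map, write $\compK\xrightarrow{\rho}\res{F}\xrightarrow{\pi}\k$ for the given complete resolution of $\k$ and set $\varepsilon=\pi\rho\colon\compK\to\k$, a chain map. Composition with $\varepsilon$ (after the appropriate shift) sends a chain map $\psi\colon\comp{C}\to\shiftq{n}{\compK}$ to the composite $C_n\xrightarrow{\psi_n}\compK_0\xrightarrow{\rho_0}F_0\to\k$, which is a degree-$n$ cocycle of $\Hom_R(\comp{C},\k)$ because $\pi\rho$ annihilates $\partial^{\res{F}}$; passing to classes, this is a well-defined homomorphism $\Phi_n\colon\Hom_{\Ktac{R}}(\comp{C},\shiftq{n}{\compK})\to\H^n\Hom_R(\comp{C},\k)$, natural in $\comp{C}$.

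The heart of the proof is bijectivity of each $\Phi_n$. I would first pass to minimal subcomplexes (so that $\Hom_R(\comp{C},\k)$ has zero differential and $\H^n\Hom_R(\comp{C},\k)=\Hom_R(C_n,\k)$), and then reduce to $n=0$ via $\Hom_{\Ktac{R}}(\comp{C},\shiftq{n}{\compK})=\Hom_{\Ktac{R}}(\shiftq{-n}{\comp{C}},\compK)$, with $\shiftq{-n}{\comp{C}}$ the complete resolution of the (still maximal Cohen-Macaulay) syzygy $\Syz{n}{M}$. For surjectivity: a degree-$0$ cocycle $\comp{C}\to\k$ annihilates $\partial^{\comp{C}}_1$, hence factors through $\Im\partial^{\comp{C}}_0=M$, giving a module homomorphism $M\to\k$; lift it first to a chain map $\res{G}\to\res{F}$ of minimal resolutions and then extend that to a chain map $\comp{C}\to\compK$ of complete resolutions — possible, and unique up to homotopy, by the comparison property of complete resolutions (\cite[1.5]{taa}; cf. \cite{rel}) — and check against the augmentations that $\Phi_0$ returns the original class. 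For injectivity: if $\psi\colon\comp{C}\to\compK$ has $\varepsilon\psi$ null-homotopic, then (after minimalization) $\varepsilon\psi=0$, so $\rho\psi$ factors through the subcomplex $\Ker\pi\subseteq\res{F}$ (the minimal resolution of $\Syz{1}{\k}$), and a d\'evissage using acyclicity of $\comp{C}$ together with the uniqueness half of the comparison property forces $\psi\simeq 0$. The clean way to close this step is to invoke the equivalence $\Ktac{R}\simeq\stMCM{R}$ of \cite{mcm-tate}, under which $\Hom_{\Ktac{R}}(\comp{C},\shiftq{n}{\compK})$ is identified with the stable $\Hom$ that Buchweitz shows computes $\Tateext_R^n(M,\k)$; combined with the definition $\Tateext_R^n(M,\k)=\H^n\Hom_R(\comp{C},\k)$ this gives the statement.

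The step I expect to be the main obstacle is making bijectivity of $\Phi_n$ work \emph{uniformly in $n$}, and in particular for $n<0$: the cocycle description of the right-hand side only ``sees'' the resolution half of $\comp{C}$, whereas the left-hand side records the whole complete resolution of $\k$, so the argument must genuinely use the negative, cosyzygy part of $\compK$ — equivalently, the finiteness of the Gorenstein dimension of $\k$, which is automatic here because $R$ is a complete intersection. Carrying the comparison naturally through the degree shift, and tracking the normalizations coming from $\pi$ and $\partial^{\comp{C}}$, is the delicate bookkeeping.
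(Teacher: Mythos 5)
Your proof takes essentially the same route as the paper's: identify $\Ext^n_{\CK(R)}(\comp{C},\compK)$ with $\Hom_{\CK(R)}(\comp{C},\shiftq{n}{\compK})$, pass to minimal subcomplexes, and establish the bijection via the comparison property for complete resolutions — your $\Phi_n$ (post-compose with the augmentation $\varepsilon=\pi\rho$) is the paper's $\Theta_n$, and your surjectivity/injectivity argument (lift a map $M_n\to\k$ to a chain map $\comp{C}\to\shiftq{n}{\compK}$, unique up to homotopy, via \cite[1.5]{taa}) is exactly the paper's $\Phi_n$. The worry you flag at the end is not actually an obstacle: since $\Tateext_R^n(M,\k)=\H^n\Hom_R(\comp{C},\k)$ is computed from the \emph{complete} resolution $\comp{C}$, a degree-$n$ cocycle for $n<0$ already factors through $M_n=\coker\doug^{\comp{C}}_{n+1}$ and lifts to a chain map $\comp{C}\to\shiftq{n}{\compK}$ by the same comparison property, so no separate treatment of negative degrees is needed.
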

\begin{proof}
	The isomorphism is given by 	$$\xymatrix{\Ext_{\CK(R)}^n(\comp{C},\compK) \ar@{<:>}[r] %\ar@/^/[d] 
	\ar@{=}[d] & \Tateext_{R}^n(M,\k) \ar@/_/[d]_{\Phi_n}  \\ \Hom_{\CK(R)}(\comp{C},\shiftq{n}{\compK}) \ar@{=}[r] %\ar@/^/[u]_{\cong} 
	& \Hom_{\CK(R)}(\mincomp{C},\shiftq{n}{\mincomp{K}}) \ar@/_/[u]_{\Theta_n}^{\cong} 
	}$$ where the first (left-hand) equality is by definition (cf. \cite[(1.3)]{tate}) and the second (bottom) equality is due to the fact $\comp{C}\cong\mincomp{C}$ in $\CK(R)$. The third (right-hand) isomorphism is by the Comparison Theorem (cf. \cite[Lemma 5.3]{rel}) where $\Phi_n$ extends any $R$-module map $f:M_n\to\k$ to a chain map $\hat{f}:\mincomp{C}\to\shiftq{n}{\mincomp{K}}$ and $\Theta_n$ restricts in the other direction, as indicated by the diagram 
	$$\xymatrix{
	\comp{C} \ar[r] \ar@{..>}[d]^{\hat{f}} & (\res{F})^{\geq n} \ar@{..>}[d]^{\bar{f}} \ar[r] & M_n \ar[d]^{f\vert_{M_n}} \\ \compK \ar[r] & \res{K} \ar[r] & \k \\	}$$ with $M_n=\coker(\doug^{\comp{C}}_{n+1})$.
\end{proof}

The intermediary step to providing a cohomological characterization of critical degree in $\Ktac{R}$ yields a triangulated approach to Definitions \ref{Def:crdeg} and \ref{Def:cocrdeg}. Take $\compK\to\res{K}\dhrightarrow\k$ to be the \emph{minimal} complete resolution of the residue field $\k=R/\m$ and note that for any endomorphism $\map{\mu}\!:\comp{C}\to\shiftq{q}{C}$, the distinguished triangle $$\tri{\comp{C}}{\mu}{\shiftq{q}{\comp{C}}}{\iota}{\Cone{\mu}}{\pi}{C}$$ yields the long exact sequence of abelian groups
\begin{align*}
	\cdots\to\Ext_{\CK(R)}^n(\Cone{\mu},\compK)\to\Ext_{\CK(R)}^n(\shiftq{q}{\comp{C}},\compK)\xrightarrow{\mu^{n}}\Ext_{\CK(R)}^n(\comp{C},\compK) \\ \to\Ext_{\CK(R)}^{n+1}(\Cone{\mu},\compK)\to\Ext_{\CK(R)}^{n+1}(\shiftq{q}{\comp{C}},\compK)\to\cdots
\end{align*}
%\scriptsize$$\cdots\to\Ext_{\CK(R)}^n(\Cone{\mu},\compK)\to\Ext_{\CK(R)}^n(\shiftq{q}{\comp{C}},\compK)\xrightarrow{\mu^{n}}\Ext_{\CK(R)}^n(\comp{C},\compK)\to\Ext_{\CK(R)}^{n+1}(\Cone{\mu},\compK)\to\Ext_{\CK(R)}^{n+1}(\shiftq{q}{\comp{C}},\compK)\to\cdots $$\normalsize
%\small
%\[\cdots\to\Hom_{R}(\Cone{u},\shiftq{i}{K})\to\Hom_{R}(\shiftq{q}{C},\shiftq{i}{K})\to\Hom_{R}(\comp{C},\shiftq{i}{K})\to\Hom_{R}(\Cone{u},\shiftq{i+1}{K})\to\Hom_{R}(\shiftq{q}{C},\shiftq{i+1}{K})\to\cdots \]
\normalsize
where $\mu^{n}=\Hom_{\CK(R)}(\mu_{n},\compK)$ (cf. \cite[\emph{Chapter 1, Proposition 4.2}]{tricat}). 
\begin{prop}\label{Prop:tri-cr&cocr}
	The $\mu$\emph{-critical degree of} $\comp{C}$ is the least homological degree $s_{\mu}$ for which $\mu^{n+q}$ is (split) injective for all $n>s_{\mu}$,
	$$\crdeg{\mu}{R}{\comp{C}}=\inf\{i\:\vline\: \mu^{n+q}\!:\Ext_{\CK(R)}^{n+q}(\shiftq{q}{\comp{C}},\compK)\hookrightarrow\Ext_{\CK(R)}^{n+q}(\comp{C}, \compK) \text{ for all } n>i \}.$$
	Likewise, the $\mu$\emph{-cocritical degree of} $\comp{C}$ is the greatest homological degree $t_{\mu}$ for which $\mu^n$ is (split) surjective for all $n<t_{\mu}$,
	$$\cocrdeg{\mu}{R}{\comp{C}}=\sup\{i\:\vline\: \mu^n\!:\Ext_{\CK(R)}^{n}(\shiftq{q}{\comp{C}}, \compK)\twoheadrightarrow\Ext_{\CK(R)}^{n}(\comp{C}, \compK) \text{ for all } n<i \}.$$
\end{prop}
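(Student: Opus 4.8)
The plan is to transport the degreewise surjectivity/split-injectivity conditions defining $\crdeg{\mu}{R}{\comp{C}}$ and $\cocrdeg{\mu}{R}{\comp{C}}$ across the isomorphism of Lemma \ref{Lem:Ext-compVmod} (equivalently, across the identification $\Ext_{\CK(R)}^n(\comp{C},\compK)\cong\Hom_{\CK(R)}(\mincomp{C},\shiftq{n}{\mincomp{K}})$) and then read them off from the long exact sequence induced by the triangle $\tri{\comp{C}}{\mu}{\shiftq{q}{\comp{C}}}{\iota}{\Cone{\mu}}{\pi}{C}$. The key point is that, because $\compK$ is the \emph{minimal} complete resolution of $\k$ and the relevant modules are $\k$-vector spaces, applying $\Hom_R(-,\k)$ to a degree-$n$ piece $\minmap{\mu}_n\colon\minmod{C}_n\to\minmod{C}_{n-q}$ of a map between minimal free $R$-complexes turns surjections into (split) injections and split injections into surjections of $\k$-vector spaces — essentially Lemma \ref{Lem:sur-inj-maps1} and Corollary \ref{Cor:surj-inj-maps2} in the dualized setting, combined with the fact that Betti numbers are $\dim_\k$ of these $\Ext$-groups. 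So the map $\mu^{n+q}=\Hom_{\CK(R)}(\mu_{n+q},\compK)$ on $\Ext$-groups of rank $b_{n+q}$ and $b_n$ is (split) injective exactly when $\minmap{\mu}_{n+q}\colon\minmod{C}_{n+q}\twoheadrightarrow\minmod{C}_n$ is surjective, and dually for surjectivity of $\mu^n$ versus split-injectivity of $\minmap{\mu}_n$.

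First I would reduce to the minimal case: replace $\comp{C}$ by $\mincomp{C}$ and $\compK$ by $\mincomp{K}$, which is harmless by Proposition \ref{Prop:hom-stable} and the identifications already set up in Lemma \ref{Lem:Ext-compVmod}. Next I would fix $n$ and analyze the map $\mu^{n+q}\colon\Hom_{\CK(R)}(\mincomp{C},\shiftq{n+q}{\mincomp{K}})\to\Hom_{\CK(R)}(\shiftq{q}{\mincomp{C}},\shiftq{n+q}{\mincomp{K}})$ — wait, more precisely the map induced by precomposition with $\mu$ on $\Ext^{n+q}(-,\compK)$ between the groups attached to $\shiftq{q}{\comp{C}}$ and $\comp{C}$. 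Using the comparison isomorphism, $\Ext_{\CK(R)}^{n+q}(\comp{C},\compK)\cong\Hom_R(\minmod{C}_{n+q},\k)\cong\Hom_R(C_{n+q},\k)$ since $\compK$ is minimal, and under this identification $\mu^{n+q}$ becomes $\Hom_R(\minmap{\mu}_{n+q},\k)\colon\Hom_R(\minmod{C}_{n},\k)\to\Hom_R(\minmod{C}_{n+q},\k)$. Now $\Hom_R(-,\k)$ applied to an $R$-module map between finitely generated free modules is ($\k$-linearly) injective — indeed split injective, as these are finite-dimensional $\k$-spaces — if and only if the original map is surjective (by right-exactness of $-\otimes_R\k$ and Nakayama, or directly by Lemma \ref{Lem:sur-inj-maps1}); and it is surjective if and only if the original map is split injective (Corollary \ref{Cor:surj-inj-maps2}, after reducing mod $\m$). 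Hence $\mu^{n+q}$ is split injective $\iff\minmap{\mu}_{n+q}\colon\minmod{C}_{n+q}\twoheadrightarrow\minmod{C}_n$, which by Definition \ref{Def:crdeg} means the infimum over such $n$ equals $\crdeg{\mu}{R}{\comp{C}}$; symmetrically $\mu^{n}$ is surjective $\iff\minmap{\mu}_n\colon\minmod{C}_n\hookrightarrow\minmod{C}_{n-q}$ splits, giving the supremum formula for $\cocrdeg{\mu}{R}{\comp{C}}$.

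The main obstacle is bookkeeping the degree shifts correctly: one must check that the map $\mu^{n}$ appearing in the stated long exact sequence — defined as $\Hom_{\CK(R)}(\mu_n,\compK)$ — corresponds under Lemma \ref{Lem:Ext-compVmod} to $\Hom_R(\minmap{\mu}_{?},\k)$ with the index matching Definitions \ref{Def:crdeg} and \ref{Def:cocrdeg} (so that $\mu^{n+q}$ pairs with $\minmap{\mu}_{n+q}\colon\minmod{C}_{n+q}\to\minmod{C}_n$ and not with some off-by-$q$ neighbor), and that the contravariance of $\Hom_{\CK(R)}(-,\compK)$ has been accounted for when passing between the ``$\comp{C}$ vs $\shiftq{q}{\comp{C}}$'' sides. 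Once the indices are pinned down, the ``split'' strengthening of ``injective''/``surjective'' in the statement is automatic because every short exact sequence of finite-dimensional $\k$-vector spaces splits, so no further argument is needed there; I would remark on this explicitly to justify the parenthetical ``(split)'' in the proposition.
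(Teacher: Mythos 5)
Your proposal follows essentially the same route as the paper: reduce to the minimal case, invoke Lemma~\ref{Lem:Ext-compVmod} to identify $\mu^n$ with $\Hom_R(\minmap{\mu}_n,\k)$, and then convert degreewise surjectivity/split injectivity of $\minmap{\mu}$ into split injectivity/surjectivity of the dualized map on $\k$-vector spaces. The one place where you are too quick is the direction ``$\Hom_R(\minmap{\mu}_n,\k)$ surjective $\Rightarrow \minmap{\mu}_n$ split injective'': you justify it via Corollary~\ref{Cor:surj-inj-maps2} ``after reducing mod $\m$'', but that corollary concerns $\Hom_R(-,R)$, not $\Hom_R(-,\k)$, and reducing mod $\m$ only tells you that $\minmap{\mu}_n\otimes\k$ is injective as a map of $\k$-vector spaces. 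Promoting that to a \emph{split} monomorphism of free $R$-modules is exactly the lifting step the paper's proof carries out explicitly: minimality of $\comp{C}$ makes the coordinate projections $\pi_j$ cocycles, surjectivity of $\hat\mu^n$ produces preimages $\rho_j$ with $\pi_j=\rho_j\mu_n$, and Nakayama then shows $\{\mu_n(e_i)\}$ extends to a basis of $C_{n-q}$, yielding the splitting. Your parenthetical remark that the ``split'' qualifiers cost nothing because one is working with finite-dimensional $\k$-spaces is correct for the $\Ext$-groups but is not by itself what gives the splitting over $R$; that requires the Nakayama argument. Apart from this omitted lifting step, your reasoning and degree bookkeeping match the paper's proof.
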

\begin{rmk}
	If there exists no such infimum $s_{\mu}$, then, by definition, $\crdeg{\mu}{R}{\comp{C}}=\infty$ and, similarly, if there exists no such supremum $t_{\mu}$ then $\crdeg{\mu}{R}{\comp{C}}=-\infty$. Further note that we may define the \textbf{critical degree of} $\mathbf{\comp{C}}$ in the same way as Definition \ref{Def:crdeg}
	and the \textbf{cocritical degree of} $\mathbf{\comp{C}}$ in the same way as Definition \ref{Def:cocrdeg}. The only distinction here is how we are defining the \emph{relative} \crco for a given $R$-complex and chain endomorphism in $\Ktac{R}$.
\end{rmk}

\begin{proof}[Proof of Proposition \ref{Prop:tri-cr&cocr}]
	Without loss of generality, assume $\comp{C}$ is minimal. First note that Lemma \ref{Lem:Ext-compVmod} gives us the induced map $\hat{\mu}^n$ for any $n\in\Z$, as depicted in the diagram
	$$\xymatrix{\Ext_{\CK(R)}^n(\shiftq{q}{\comp{C}},\compK) \ar[r]^{\mu^n} \ar@/^/[d]^{\Phi_j} & \Ext_{\CK(R)}^n(\comp{C},\compK) \ar@/_/[d]_{\Phi_n}  \\ \Tateext_{R}^{n}(M_{-q},\k) \ar@{..>}[r]^{\hat{\mu}^n} \ar@/^/[u]^{\Theta_n}_{\cong} & \Tateext_R^{n}(M,\k) \ar@/_/[u]_{\Theta_n}^{\cong}
	}$$
	where $M_{-q}:=\Im\doug^{\shiftq{q}{\comp{C}}}_0$. Hence, it is enough to show that, for $n\in\Z$, $\mu_n$ is surjective (split injective) if and only if $\hat{\mu}^n$ is split injective (surjective). We begin with the forward direction of both cases. First note that $\mu_n$ surjective yields the short exact sequence 	\[0\to\ker(\mu_{n})\to C_{n}\xrightarrow{\mu_{n}} C_{n-q}\to 0\] 
	which splits. Then, applying $\Hom_R(-,\k)$, we see that the short exact sequence
	\[0\to \Hom_R(C_{n-q},\k)\xrightarrow{\Hom(\mu_{n},\k)}\Hom_R(C_{n},\k)\to \Hom_R(\ker(\mu_{n}),\k)\to 0\]
	must split as well and, thus, $\hat{\mu}^n=\Hom(\mu_{n},\k)$ is split injective. On the other hand, if we suppose $\mu_n$ is split injective then the short exact sequence \[0\to C_n\xhookrightarrow{\mu_n} C_{n-q}\to A \to 0\] splits, where $C_{n-q}=A\oplus \im(\mu_n)$. Once again, applying $\Hom_R(-, \k)$ yields the short exact sequence  \[0\to\Hom_R(A,\k)\to\Hom_R(C_{n-q},\k) \xrightarrow{\hat{\mu}^n}\Hom_R(C_n,\k) \to 0\] which must also split, implying that $\hat{\mu}^n$ is surjective as needed. Now, to address the backwards direction, first observe that $\hat{\mu}^n$ split injective is equivalent to being left-cancellative so that $	\hat{\mu}^{n}(\alpha)=\hat{\mu}^{n}(\beta)$ implies $\alpha=\beta$ for any two cocycles $\alpha$, $\beta\in\Hom_R(C_{n-q}\,\k)$. Of course, due to the action of $\hat{\mu}^n$ on $\Tateext_{R}(M_{-q},\k)$, this is equivalent to the statement that $\alpha\mu_{n}=\beta \mu_{n}$ implies $\alpha=\beta$ for any two morphisms $\alpha$, $\beta\!:C_{n-q}\to\k$. Equivalently, $\mu_{n}$ is right-cancellative, and thus surjective. \\
	
	The same approach does not work for $\hat{\mu}_n$ surjective, so instead denote by $\{e_i\}$ a basis for $C_n$ and define $\pi_j\!\!:C_n\to\k$ so that $\pi_j(e_i)=\delta_{ij}$. Note that since $\doug(\comp{C})\subseteq \m\comp{C}$, each $\pi_j$ will be a cocycle in $\Hom_R(C_n,\k)$. By the surjectivity of $\hat{\mu}^n$ there exist maps $\rho_j\!\!:C_{n-q}\to\k$ such that $\pi_j=\rho_j\mu_n$ in turn implying $\{\mu_n(e_i)\}$ forms a linearly independent subset of a basis $\mathcal{E}$ for $C_{n-q}$ by Nakayama.
	%(If we take $\mu_n(e_i)$ as the appropriate image in the $\k$-vector space $C_{n-q}/\m C_{n-q}$ and assume $0=\sum_{i}a_i\mu_n(e_i)$ for some $a_i\in\k$, then note for any $j$ we must have that $0=\rho_j(0)=\sum_{i}a_i\rho_j\mu_n(e_i)=\sum_{i}a_i\pi_j(e_i)$. And so we see that $a_j=0$ for each $j$, entailing that $\mu_n(e_i)$ is a linearly independent set on $C_{n-q}/\m C_{n-q}$. Thus, we can complete this set to a $\k$-basis $\bar{\mathcal{E}}$ and, by Nakayama's Lemma, the preimage $\mathcal{E}$ will be a basis for $C_{n-q}$ as well.)
	As a linearly independent sub-basis of $C_{n-q}$ (defined by $\im(\mu_n)$) is in one-to-one correspondence with a basis for $C_n$, it must hold that $\mu_n$ is injective. Furthermore,
	$\mathcal{E}=(\mathcal{E}\setminus\{\mu_n(e_i)\})\cup\{\mu_n(e_i)\}$ and, if we denote by $A$ the subspace generated by $\mathcal{E}\setminus\{\mu_n(e_i)\}$, then we may write $C_{n-q}=A\oplus\im(\mu_n)$ showing $\mu_n$ is split injective.	
\end{proof}

%\subsection{Consequences of the Long Exact Sequence} \RA{Include?}

\subsection{Cohomological Characterizations} We are now ready to present our results, analogous to Proposition 7.2 in \cite{cid}, which give the cohomological characterizations of the critical and cocritical degrees in $\Ktac{R}$.
\begin{prop}\label{Prop:cr-cohom}
	If $\comp{C}\not\simeq0$ is a totally acyclic $R$-complex with $M=\im\doug^{\comp{C}}_0$, then $\crdeg{}{R}{\comp{C}}<\infty$ and the following equalities hold:
	$$\crdeg{}{R}{\comp{C}}=\sup\{r\in\Z\:|\:\depth_{\CS}\Ext_{\CK(R)}^{\geq r}(\comp{C},\compK)=0\}$$
	$$\hspace{10mm}=\sup\{r\in\Z\:|\:\depth_{\CS}\Tateext_{R}^{\geq r}(M,\k)=0\}.$$
\end{prop}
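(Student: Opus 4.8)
The plan is to model this proof on the original argument in \cite[Proposition 7.2]{cid}, adapted to $\Ktac{R}$ via the triangulated reformulation already established in Proposition \ref{Prop:tri-cr&cocr} and the identification of $\Ext_{\CK(R)}^{\ast}(\comp{C},\compK)$ with Tate cohomology $\Tateext_R^{\ast}(M,\k)$ from Lemma \ref{Lem:Ext-compVmod}. The second equality in the statement is then immediate from that lemma, so the real content is the first equality together with the finiteness claim. Finiteness of $\crdeg{}{R}{\comp{C}}$ has already been remarked (it follows from \cite[Proposition 7.2]{cid}, the module-complex compatibility remark, and the fact that a realizing $\mu$ may be taken to be a linear form in the CI operators $\mathfrak{t}$ by \cite[Theorem 3.1]{Eisenbud}); I would restate this briefly and cite it rather than reprove it.

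First I would set $E = \Tateext_R^{\ast}(M,\k) \cong \Ext_{\CK(R)}^{\ast}(\comp{C},\compK)$, regarded as a graded module over $\CS = R[\chi_1,\dots,\chi_c]$ — or, since $\m$ annihilates it, over $\Ck = \k[\chi_1,\dots,\chi_c]$ with $\x = (\chi_1,\dots,\chi_c)$. Recall from the preliminaries that $E_+ = \bigoplus_{i\geq 0}\Tateext_R^i(M,\k)$ is noetherian over $\Ck$ and $E_- = \bigoplus_{i<0}\Tateext_R^i(M,\k)$ is artinian over $\Ck$. Fix $r\in\Z$ and write $E^{\geq r} = \bigoplus_{i\geq r}\Tateext_R^i(M,\k)$. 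The key translation is: $\depth_{\CS} E^{\geq r} = 0$ if and only if $E^{\geq r}$ has an associated prime equal to $\x$ (equivalently $\soc(E^{\geq r})\neq 0$ in the graded sense — there is a nonzero element killed by every $\chi_j$), which by the action of the $\chi_j = \Hom_R(t_j,\k)$ on Tate cohomology is exactly the statement that there is a nonzero cohomology class $\alpha\in\Tateext_R^n(M,\k)$ for some $n\geq r$ with $\chi_j\alpha = 0$ for all $j$. I would then connect this to Proposition \ref{Prop:tri-cr&cocr}: a class $\alpha\in\Ext_{\CK(R)}^{n+q}(\comp{C},\compK)$ that fails to be hit injectively by $\mu^{n+q}$ corresponds, for $\mu$ a linear form $\sum a_j t_j$ in the CI operators, precisely to a class not killed by the corresponding linear combination of the $\chi_j$; failure of injectivity of $\mu^{n+q}$ in cohomological degree $n+q$ for arbitrarily large $n$ is thus the non-vanishing of the relevant "$\chi$-torsion" arbitrarily high up.

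The heart of the argument is then the following chain of equivalences, run in both directions. On one hand, if $r > \crdeg{}{R}{\comp{C}}$, pick a $\mu$ realizing the critical degree $s = \crdeg{}{R}{\comp{C}}$; then $\mu^{n+q}$ is (split) injective for all $n>s$, hence in particular for all $n+q$ in the range $\geq r$ we get enough injectivity that multiplication by the corresponding linear form on $E^{\geq r}$ is injective, forcing $\depth_{\CS} E^{\geq r} \geq 1$, i.e. $> 0$. This shows $\sup\{r : \depth = 0\} \leq \crdeg{}{R}{\comp{C}}$. Conversely, if $r \leq \crdeg{}{R}{\comp{C}}$, I need to produce a single linear form $\chi = \sum a_j\chi_j$ and a nonzero class in $E^{\geq r}$ killed by $\chi$ — but more is needed to get $\depth_{\CS} E^{\geq r} = 0$ rather than just "one linear form is a zerodivisor." Here I would invoke the structure of $E$ as a finitely generated graded module over $\Ck$: by prime avoidance / the theory of associated primes over the (essentially) polynomial ring $\Ck$, depth $0$ is equivalent to $\x \in \Ass_{\Ck} E^{\geq r}$, and one leverages the failure of injectivity of \emph{every} linear-form endomorphism $\mu^{n+q}$ (for $n+q$ arbitrarily large, below the critical degree) simultaneously — since the critical degree is the infimum over all $\mu$, below it no linear form works, which over a graded module finitely generated over $\Ck$ forces $\x$ itself to be an associated prime of the high-degree part. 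I expect this converse direction — upgrading "no single linear form in the $t_j$ gives eventual injectivity below degree $r$" to "$\depth_{\CS} E^{\geq r} = 0$" — to be the main obstacle, and I would handle it by the graded analogue of the argument in \cite[Proposition 7.2]{cid}: reduce to a finitely generated graded $\Ck$-module, use that the homogeneous maximal ideal $\x$ is associated to the submodule supported in high degrees precisely when no homogeneous linear form is a nonzerodivisor on it, and match homogeneous linear forms with linear forms in the CI operators via $\chi_j = \Hom_R(t_j,\k)$. The finiteness of $\gldim$-type bounds (noetherianity of $E_+$, artinianity of $E_-$) is what guarantees the relevant supremum is a genuine integer and not $+\infty$ when $\comp{C}\not\simeq 0$, matching the already-stated finiteness of the critical degree.
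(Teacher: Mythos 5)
Your proposal takes a genuinely different route from the paper, and it contains a gap that you yourself flag. You set out to re-derive the cohomological characterization directly over the ring of cohomology operators $\Ck$, running the proof of \cite[Proposition 7.2]{cid} again in the Tate setting: translate ``critical degree'' into eventual injectivity of the operator maps via Proposition \ref{Prop:tri-cr&cocr}, identify $\depth_{\CS}E^{\geq r}=0$ with $\x$ being an associated prime of $E^{\geq r}$, and match linear forms in the $\chi_j$ with linear forms in the CI operators. The step you call ``the main obstacle''---upgrading ``no linear form $\chi=\sum a_j\chi_j$ is a non-zerodivisor on $E^{\geq r}$'' to $\depth_{\CS}E^{\geq r}=0$---is exactly where the work is, and you sketch it rather than carry it out. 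To close it you would need at least: that $E^{\geq r}$ is still a noetherian $\Ck$-module for $r<0$ (true, since $E^{\geq r}\cap E_-$ has finite length, but it should be said); prime avoidance over $\Ck$ to pass from ``some homogeneous NZD exists'' to ``some \emph{linear} NZD exists,'' which uses an infinite residue field; and the fact (from \cite[Theorem 3.1]{Eisenbud} together with \cite[7.2]{cid}) that the infimum defining $\crdeg{}{R}{\comp{C}}$ is realized by a linear form, so that ``no linear form works below degree $r$'' really is equivalent to $\crdeg{}{R}{\comp{C}}\geq r$. As written, the proposal asserts rather than proves this chain.

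The paper's proof sidesteps all of that by \emph{reducing} to \cite[Proposition 7.2]{cid} instead of reproving it. Since $\Tateext_R^{\geq r}(M,\k)=\Ext_R^{\geq r}(M,\k)$ whenever $r\geq 0$, the statement is immediate from \cite[7.2]{cid} in the nonnegative range. For the negative range it applies the suspension formula $\crdeg{}{R}{\shiftq{n}{\comp{C}}}=\crdeg{}{R}{\comp{C}}+n$ (Proposition \ref{Prop:crco-shifts}) together with the dimension-shift isomorphism $\Tateext_R^{\,j+n}(M_{r^*},\k)\cong\Tateext_R^{\,j}(\Omega^{n}M_{r^*},\k)$ to slide the complex into the range where the previous case already applies, then transports the answer back. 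This reduction is shorter, inherits whatever field hypotheses \cite{cid} needs without restating them, and does not require re-establishing the prime-avoidance machinery in the Tate setting. Your approach would produce a more self-contained argument if completed, but it genuinely duplicates work that the shift trick makes unnecessary.
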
 

In the proposition above, recall that $\gExt{\CK(R)}{\comp{C}}{\compK}$ is a graded $\CS$-module where $\CS=R[\chi_1,\dots,\chi_c]$ and denote by $\x=(\chi_1,\dots,\chi_c)$ (cf. \cite[\S 4.2]{minfreeresCI}). Thus, for each $r\in\Z$, note that $\Ext_{\CK(R)}^{\geq r}(\comp{C},\compK)$ is a submodule containing all elements of degree greater than or equal to $r$. Note that we only need show the second equality, as the first equality will then be given by Proposition \ref{Prop:tri-cr&cocr}. In the following proof, we handle the cases for $\crdeg{}{R}{\comp{C}}\geq 0$ and $\crdeg{}{R}{\comp{C}}< 0$ separately, as the connection to Definition \ref{Def:crdeg-modules} simplifies the former case. For ease of notation, set $\crdeg{}{R}{\comp{C}}=s$ and $$r^*=\sup\{r\in\Z\:|\:\depth_{\hat{\CS}}\Tateext_{R}^{\geq r}(M,\k)=0\}.$$

\begin{proof}
	Finiteness of $s$ is guaranteed by the existence of a linear form $\ell\in\CS$ which is eventually a non zero-divisor on the truncation $\Ext_R^{\geq N}(M,\k)$ for some $N\gg0$ ($\!\!$\cite[Theorem 3.1]{Eisenbud}). Whenever $s\geq 0$, $\crdeg{}{R}{\comp{C}}=\crdeg{}{R}{M}$ and we can apply \cite[(7.2)]{cid} to see that $$s=\sup\{r\in \N\cup\{0\}\,|\, \depth_{\CS}\Ext_R^{\geq r}(M,\k)=0\}.$$ It should be clear that, in this case, $r^*=s$ since $\Tateext_{R}^{\geq r}(M,\k)=\Ext_{R}^{\geq r}(M,\k)$ for each $r\geq 0$. Likewise, if $r^*\geq 0$, this forces $s\geq 0$ and $r^*=s$. So now suppose $r^*\leq s<0$ and consider the $R$-complex $\shiftq{\abs{r^*}}{\comp{C}}$ which has non-negative critical degree since $\crdeg{}{R}{\shiftq{\abs{r^*}}{\comp{C}}}=s\: +\abs{r^*}$ by Proposition \ref{Prop:crco-shifts}. As $\im\doug^{\shiftq{\abs{r^*}}{\comp{C}}}_0=\im\doug^{\comp{C}}_{r^*}$, it should be clear that $$s\:+\abs{r^*}=\sup\{r\in\N\cup\{0\}\,|\,\depth_{\CS}\Ext_{R}^{\geq r}(M_{r^*},\k)=0\}$$
	and so there is a nonzero socle element of degree $s\:+\abs{r^*}$. However, there exists some $\chi\in\x$ that is a non zero-divisor on $\Ext_R^{>s+\abs{r^*}}(M_{r^*},\k)$ meaning $$\depth_{\CS}\Ext_R^{>s+\abs{r^*}}(M_{r^*},\k)\neq0.$$ By \cite[\emph{Lemma 4.3}]{tate}, there is an isomorphism $$\Tateext_R^{s+\abs{r^*}+i}(M_{r^*},\k)\cong\Tateext_R^{s+i}(\Omega^{\abs{r^*}}M_{r^*},\k)$$ thus implying $$\Ext_R^{s+\abs{r^*}+i}(M_{r^*},\k)\cong\Tateext_R^{s+i}(M,\k)$$ for each $i\in\Z$ as well. %Thus, $\chi$ is a non zero-divisor on $$\Tateext_R^{\ge s+i}(M,\k)$$ for each $i>0$. 
	Therefore, $$\depth_{\CS}\Tateext_R^{\ge s+i}(M,\k)\neq 0$$ for all $i>0$ but it must hold that $\depth_{\CS}\Tateext_R^{\ge s}(M,\k)=0$, in turn implying $r^*=s$. If we instead suppose $s\leq r^*<0$ and consider the $R$-complex $\shiftq{\abs{s}}{C}$, we can apply the same argument as above since $\crdeg{}{R}{\shiftq{\abs{s }}{C}}=0$ and there exists an isomorphism $$\Tateext_R^{s+\abs{s}+i}(M_s,\k)\cong\Tateext_R^{s+i}(\Omega^{\abs{s}}M_s,\k)$$ so that $$\Tateext_R^{i}(M_s,\k)\cong\Tateext_R^{s+i}(M,\k)$$ for each $i>0$.
\end{proof}

We now work to show the cohomological characterization for cocritical degree, which aligns nicely with what we understand about critical degree. Before doing so, we first need a lemma that establishes a significant relationship between $\gTateext{R}{M}{\k}$ and $\gTateext{R}{M^*}{\k}$.

\begin{lemma}\label{Lem:depth-codepth}
	Let $\comp{C}\not\simeq0$ be in $\Ktac{R}$ with $\cres{C}{F}{M}$ the minimal complete resolution of $M=\im\doug^{\comp{C}}_0$ and $\shiftq{-1}{\comp{C}^*}\to(\shiftq{-1}{\comp{C}^*})^{\geq 1}\twoheadrightarrow M^*$ the (minimal) complete resolution of $M^*\cong\im(\shiftq{-1}{\doug^*})_0=\im\doug^*_1$. Then: $$\depth_{\CS}\Tateext_R^{\geq r}(M^*,\k)=0 \text{ if and only if } \cdepth_{\CS}\Tateext_R^{\leq -r}(M,\k)=0.$$
\end{lemma}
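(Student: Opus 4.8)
The plan is to derive the statement from $R$-duality between the two complete resolutions. As recorded in the statement, the complete resolution of $M^{*}$ is the shifted dual $\shiftq{-1}{\comp{C}^{*}}$ of that of $M$, and since all complexes in play are minimal the induced differentials on $\Hom_{R}(\comp{C},\k)$ and on $\Hom_{R}(\shiftq{-1}{\comp{C}^{*}},\k)$ vanish. Hence $\Tateext_{R}^{n}(M,\k)=\Hom_{R}(C_{n},\k)$ and $\Tateext_{R}^{n}(M^{*},\k)=\Hom_{R}\bigl((\shiftq{-1}{\comp{C}^{*}})_{n},\k\bigr)$; since $(\shiftq{-1}{\comp{C}^{*}})_{n}$ is the $R$-dual of a finitely generated free module, the latter is canonically the $\k$-linear dual of a term of $\Hom_{R}(\comp{C},\k)$. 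Keeping track of the $\Sigma^{-1}$ shift, this yields an isomorphism of graded $\k$-vector spaces between $\gTateext{R}{M^{*}}{\k}$ and the graded $\k$-dual of $\gTateext{R}{M}{\k}$ that carries the submodule $\Tateext_{R}^{\geq r}(M^{*},\k)$ onto the $\k$-dual of the quotient module $\Tateext_{R}^{\leq -r}(M,\k)$ of $\gTateext{R}{M}{\k}$.

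The next step is to promote this to an isomorphism of graded $\CS$-modules (equivalently of $\Ck$-modules, as both sides are annihilated by $\m$). The key point is that the CI operators on $\comp{C}^{*}$ are the $R$-duals $\Hom_{R}(t_{j},R)$ of the CI operators $t_{j}$ on $\comp{C}$: dualizing over $Q$ a $Q$-free lift of $\comp{C}$ produces a $Q$-free lift of $\comp{C}^{*}$ whose squared differential is $\sum_{j}f_{j}\Hom_{Q}(\tilde{t}_{j},Q)$, so the Eisenbud operators of $\comp{C}^{*}$ are $\Hom_{R}(t_{j},R)$. Consequently the cohomology operators $\chi_{j}=\Hom_{R}(t_{j},\k)$ acting on $\gTateext{R}{M^{*}}{\k}$ are exactly the $\k$-linear transposes of the $\chi_{j}$ on $\gTateext{R}{M}{\k}$, and the isomorphism of the first step is $\CS$-linear. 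Recall also that $\Tateext_{R}^{\geq r}(M^{*},\k)$ is noetherian over $\Ck$ while $\Tateext_{R}^{\leq -r}(M,\k)$ is artinian over $\Ck$, and that graded $\k$-duality interchanges these two classes.

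It then remains to apply the standard fact that the exact contravariant functor $(-)^{\vee}$ of graded $\k$-duality interchanges $(0:_{E}\x)$ with $E^{\vee}/\x E^{\vee}$, whence $\soc(E)\neq 0$ if and only if $\cosoc(E^{\vee})\neq 0$. Taking $E=\Tateext_{R}^{\geq r}(M^{*},\k)$, so that $E^{\vee}\cong\Tateext_{R}^{\leq -r}(M,\k)$ by the first two steps, and using that $\depth_{\CS}$ of the noetherian $\Ck$-module $E$ vanishes exactly when $\soc(E)\neq 0$ while $\cdepth_{\CS}$ of the artinian $\Ck$-module $E^{\vee}$ vanishes exactly when $\cosoc(E^{\vee})\neq 0$---both as recalled in Section~\ref{sec:Prelim} from \cite{DCC-mat}---we conclude that $\depth_{\CS}\Tateext_{R}^{\geq r}(M^{*},\k)=0$ if and only if $\cdepth_{\CS}\Tateext_{R}^{\leq -r}(M,\k)=0$.

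I expect the main obstacle to be the middle step together with its index bookkeeping: one must verify that $\Hom_{R}(-,R)$ and then $\Hom_{R}(-,\k)$ carry the CI (hence cohomology) operators of $\comp{C}$ to those of $\shiftq{-1}{\comp{C}^{*}}$ up to transpose, and must simultaneously keep the homological-versus-cohomological grading and the $\Sigma^{-1}$ shift aligned so that the truncation $\geq r$ on the $M^{*}$ side really corresponds to the truncation $\leq -r$ on the $M$ side. A secondary point worth spelling out is that $\Tateext_{R}^{\geq r}(M^{*},\k)$ is genuinely finitely generated and $\Tateext_{R}^{\leq -r}(M,\k)$ genuinely artinian over $\Ck$---so that vanishing of $\depth_{\CS}$, respectively $\cdepth_{\CS}$, is detected by the socle, respectively the cosocle---which follows from the noetherian/artinian dichotomy for $E_{+}$ and $E_{-}$ over $\Ck$ recorded earlier.
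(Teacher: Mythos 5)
Your proof is correct, and it takes a genuinely different and somewhat more conceptual route than the paper's. The paper works element-by-element at the level of chain endomorphisms: it fixes a candidate regular element $\chi$, realizes it as a degree $-q$ chain map $\mu^*$ on $\comp{C}^*$, invokes Proposition~\ref{Prop:tri-cr&cocr} to translate injectivity of $\chi$ on $\Tateext_R^{\geq r}(M^*,\k)$ into degree-wise surjectivity of $\mu^*$, applies Lemma~\ref{Lem:sur-inj-maps1} to transpose this into degree-wise split injectivity of $\mu$ on $\comp{C}$, and finally runs Proposition~\ref{Prop:tri-cr&cocr} in reverse to obtain a coregular element on $\Tateext_R^{\leq -r}(M,\k)$. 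Your proof instead packages the entire statement as a graded Matlis-type duality: once you have verified that the CI operators on $\comp{C}^*$ are the $R$-duals of those on $\comp{C}$, the graded $\k$-dual of $\gTateext{R}{M}{\k}$ becomes $\CS$-linearly isomorphic to $\gTateext{R}{M^*}{\k}$ (with truncations $\geq r$ corresponding to $\leq -r$), and the statement reduces to the standard fact that $\k$-duality interchanges $(0:_E\x)$ and $E^\vee/\x E^\vee$, hence socle with cosocle, hence vanishing depth with vanishing codepth. Both arguments ultimately rest on the same transpose fact for maps between free modules and the same noetherian/artinian dichotomy for $E_+$ and $E_-$ over $\Ck$; what your version buys is a one-shot, module-level duality statement that avoids re-running the degree-wise argument from Proposition~\ref{Prop:tri-cr&cocr} twice, at the cost of having to verify the $\CS$-linearity of the duality isomorphism (the CI-operator bookkeeping you flag), which the paper's element-wise approach sidesteps by never leaving the chain-map level.
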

\begin{proof}
	%Set $E^{>r}(M^*)=\Tateext^{>r}_R(M^*,\k)$ and $E^{<-r}(M)=\Tateext^{<-r}_R(M,\k)$. 
	First note that $\depth_{\CS}\Tateext^{>r}_R(M^*,\k)\neq0$ if and only if there exists some $\chi\in\x$ which is a non zero-divisor on $\Tateext^{>r}_R(M^*,\k)$. This is equivalent to multiplication by $\chi$ on $\gExt{R}{M^*}{\k}$ being injective for elements of degree $n\geq r$. Denote by $q=\deg(\chi)$ and define $(\hat{\mu}^*)^n\!:\Tateext_{R}^n(M^*_{-q},\k)\to\Tateext_{R}^n(M^*,\k)$ by multiplication of $\chi$. By the same argument in the proof of Proposition \ref{Prop:tri-cr&cocr}, $(\hat{\mu}^*)^n$ is injective if and only if $(\mu^*)_{n+q}\!:C^*_{n+q}\to C^*_n$ is surjective for each $n\geq r$, where $(\hat{\mu}^*)^{n+q}=\Hom(\mu^*_{n+q},\k)$. Furthermore, by Lemma \ref{Lem:sur-inj-maps1}, this is equivalent to the map $\mu_{n}=(\mu^*_{n+q})^T\!:C_{n}\to C_{n-q}$ being split injective for $n\leq -r$. Now, applying the same argument from Proposition \ref{Prop:tri-cr&cocr} once more, we see that this is equivalent to the map $\hat{\mu}^n=\Hom(\mu_n,\k)\!:\Tateext_{R}^n(M_{-q},\k)\to\Tateext_{R}^n(M,\k)$ being surjective for $n\leq -r$. Equivalently, there exists an element $\chi'\in\x$ for which multiplication by $\chi'$ on $\gExt{R}{M}{\k}$ is surjective for all elements of degree $n\leq -r$. Lastly, $\chi' \Tateext^{<-r}_R(M,\k)=\Tateext^{<-r}_R(M,\k)$ if and only if %$\x E^{-r}(M)=E^{-r}(M)$ by Proposition \ref{dcc-mat.thm2}, we see that
	$\cdepth_{\CS}\Tateext^{<-r}_R(M,\k)\neq 0$. %Since $\chi' E^{-r}(M)=E^{-r}(M)$ if and only if $\x E^{-r}(M)=E^{-r}(M)$ by Proposition \ref{dcc-mat.thm2}, we see that $\cdepth_{\CS}E^{<-r}(M)\neq 0$. \\
\end{proof}
%\begin{proof}
%	First note that if $\depth_{\CS}E^r(M^*)\neq0$ then there exists some $\chi\in\x$ which is a non zero-divisor on $E^r(M^*)$; by definition, this corresponds directly with a degree $2$ endomorphism $\chi$ on $\gExt{R}{M^*}{\k}$ that is injective for all $n\geq r$. This in turn corresponds with a $-2$ degree endomorphism $\mu$ on $\comp{C^*}$ which is surjective for all $n\geq r$. By Corollary 2 in Chapter 5, we obtain an endomorphism $\mu^T$ on $(\dual{C})^*=\comp{C}$ that is split injective for all degrees $n\leq -r$ and, again by definition (refer to Proposition \ref{eq-defns}), this corresponds directly with a degree $2$ map $\chi$ on $\gTateext{R}{M}{\k}$ that is surjective for all $n\leq r$ thus implying $\chi$ is surjective on $E^{-r}(M)$. Hence, there exists some $\chi\in\x$ such that $\chi E^{-r}(M)=E^{-r}(M)$ and so $\x E^{-r}(M)=E^{-r}(M)$ by Proposition \ref{dcc-mat.thm2}. Therefore, $\depth_{\CS}E^r(M^*)\neq0$ if and only if $\cdepth_{\CS}E^{-r}(M)\neq0$, proving the lemma.
%\end{proof}
%\vspace{3mm}
We provide the following visual proof of Lemma \ref{Lem:depth-codepth}, as well as its relevancy to the \crco$\!\!$:\vspace{2mm}

%\hrule\vspace{2mm}
\tiny
\begin{center}
	$\xymatrix @R=.75cm @C=2.25cm { *++[F]{\text{Surjective for all } n>r} \ar@/^3pc/@{<~>}[r]|{\text{dualizing}} \ar@{<=>}[r]^{\text{Lemma \ref{Lem:sur-inj-maps1}}} & *++[F]{\text{Split Injective for all } n<-r} \ar@{<:>}[d]|{\text{Prop. \ref{Prop:tri-cr&cocr}}} \\ *++[F.]{\textbf{ \textcolor{gray}{Injective on L.E.S.}}} \ar@{<:>}[u]|{\text{Prop. \ref{Prop:tri-cr&cocr}}} & *++[F.]{\textbf{ \textcolor{gray}{Surjective on L.E.S.}}} \ar@{<=>}[d]|{\text{\color{gray}(Equivalent)}} \\ *++[F=]{\textbf{ \textcolor{gray}{Regular element on }} \Ext_R^{>r}(M^*,\k)} \ar@{<=>}[u]|{\text{\color{gray}(Equivalent)}} \ar@{<.>}
		[r]^{\text{Equivalency}}_{\text{of notions}} & *+++[F=]{\textbf{ \textcolor{gray}{Coregular element on }} \Tateext_R^{<-r}(M,\k)} \\ *++[F.]{\textbf{\textcolor{gray}{Highest Degree Socle Element}}} \ar@{<:}[u]|{\color{gray}(\crdeg{}{R}{M^*})} & *+++[F.]{\textbf{\textcolor{gray}{Lowest Degree Cosocle Element}}} \ar@{<:}[u]|{\color{gray}(\cocrdeg{}{R}{M})} 
	}$
\end{center}\vspace{2mm}\normalsize
%\hrule\vspace{3mm}

\begin{prop}\label{Prop:cocr-cohom}
	If $\comp{C}\not\homeq 0$ is a totally acyclic $R$-complex with $M=\im\doug^{\comp{C}}_0$, then $\cocrdeg{}{R}{C}=t>-\infty$ and the following equalities hold:
	$$\cocrdeg{}{R}{C}=\inf\{r\in\Z\:|\:\cdepth_{\hat{\CS}}\Ext_{\CK(R)}^{\leq r}(\comp{C},\comp{K})=0\}$$
	$$\hspace{10mm}=\inf\{r\in\Z\:|\:\cdepth_{\hat{\CS}}\Tateext_{R}^{\leq r}(M,\k)=0\}.$$
\end{prop}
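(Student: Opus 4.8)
The plan is to mirror the proof of Proposition~\ref{Prop:cr-cohom}, exploiting the duality already established. First I would observe that, as with the critical degree case, it suffices to prove only the second equality, since the first follows immediately from the triangulated characterization in Proposition~\ref{Prop:tri-cr&cocr}: there $\cocrdeg{\mu}{R}{\comp{C}}$ is described via surjectivity of the maps $\mu^n$ on $\Ext_{\CK(R)}^{n}(\comp{C},\compK)$, and surjectivity of multiplication by a cohomology operator $\chi\in\x$ on a truncation is exactly the condition $\chi E = E$, i.e. the vanishing of codepth. So the content is the module-theoretic statement that $\cocrdeg{}{R}{\comp{C}} = \inf\{r \mid \cdepth_{\CS}\Tateext_R^{\leq r}(M,\k)=0\}$, and finiteness (here, $t>-\infty$) will come from dualizing Eisenbud's linear-form argument.

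The key step is to run everything through the $R$-dual. Set $s^* = \crdeg{}{R}{\comp{C}^*}$; by Proposition~\ref{Prop:dual-crco} we have $\cocrdeg{}{R}{\comp{C}} = -s^*$ up to the degree bookkeeping already recorded there (I would be careful to track the exact shift: Proposition~\ref{Prop:dual-crco} gives $\crdeg{}{R}{\comp{C}^*} = -\cocrdeg{}{R}{\comp{C}}$, so $t = -s^*$). By Proposition~\ref{Prop:cr-cohom} applied to $\comp{C}^*$ (which is again totally acyclic and nonzero), we know $s^* = \sup\{r \mid \depth_{\CS}\Tateext_R^{\geq r}(M^*,\k)=0\}$ and $s^*<\infty$. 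Now invoke Lemma~\ref{Lem:depth-codepth}: $\depth_{\CS}\Tateext_R^{\geq r}(M^*,\k)=0$ if and only if $\cdepth_{\CS}\Tateext_R^{\leq -r}(M,\k)=0$. Taking suprema over $r$ on the left corresponds to taking infima over $-r$ on the right, so
\[
t = -s^* = -\sup\{r \mid \depth_{\CS}\Tateext_R^{\geq r}(M^*,\k)=0\} = \inf\{r \mid \cdepth_{\CS}\Tateext_R^{\leq r}(M,\k)=0\},
\]
which is the desired equality, and $s^*<\infty$ gives $t>-\infty$.

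The main obstacle I anticipate is not any deep argument but getting the index shifts exactly right: Lemma~\ref{Lem:depth-codepth} is stated with a clean $r \leftrightarrow -r$ correspondence, but the passage from $M$ to $M^*$ via the complete resolution introduces a one-degree relabeling (as flagged in Proposition~\ref{Prop: Rdual and Cocrdeg} and in the description of $\dual{C}$), and I must check that this relabeling is already absorbed into the statement of Lemma~\ref{Lem:depth-codepth} rather than being applied twice. Concretely I would verify on the level of the minimal complete resolutions that $\im\doug^{\shiftq{-1}{\comp{C}^*}}_0 = \im\doug^*_1 \cong M^*$, confirm that the $\chi$-multiplication maps used in the triangulated picture land in the degrees claimed, and then the equality reads off cleanly. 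A secondary, minor point is to confirm the endpoint/strictness conventions ($\geq r$ versus $>r$, $\leq r$ versus $<r$) agree between Proposition~\ref{Prop:cr-cohom}, Lemma~\ref{Lem:depth-codepth}, and the supremum/infimum in the statement — these are the places where an off-by-one could hide, but none of them requires new ideas beyond what has already been assembled.
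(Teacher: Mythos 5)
Your plan follows the same overall route as the paper — dualize to $\comp{C}^*$, use $\depth$ of $\gTateext{R}{M^*}{\k}$ to get at $\cdepth$ of $\gTateext{R}{M}{\k}$, and import finiteness from the already-known critical-degree statement. Where you differ is in the packaging: you chain Proposition~\ref{Prop:dual-crco}, Proposition~\ref{Prop:cr-cohom} (applied to $\comp{C}^*$), and Lemma~\ref{Lem:depth-codepth} as a pipeline, whereas the paper's proof instead re-argues the socle/cosocle correspondence directly from \cite[Theorem~2]{DCC-mat} and \cite[7.2]{cid} without explicitly invoking the earlier cohomological characterization on $\comp{C}^*$. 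Your version is more modular and arguably cleaner, but it exposes the degree bookkeeping more nakedly.

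That bookkeeping is a genuine gap, not just a verification to be waved through, because when you apply Proposition~\ref{Prop:cr-cohom} to $\comp{C}^*$ the module that appears in its conclusion is $\im\doug^{\comp{C}^*}_0$, whereas Lemma~\ref{Lem:depth-codepth} is phrased in terms of $M^*\cong\im\doug^{\comp{C}^*}_1=\Omega^1\bigl(\im\doug^{\comp{C}^*}_0\bigr)$. Dimension shifting then gives $\Tateext_R^{n}(M^*,\k)\cong\Tateext_R^{n+1}\bigl(\im\doug^{\comp{C}^*}_0,\k\bigr)$, so $\sup\{r\mid\depth_{\CS}\Tateext_R^{\geq r}(M^*,\k)=0\}$ equals $s^*-1$, not $s^*$. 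Feeding $s^*-1$ through the clean $r\leftrightarrow-r$ correspondence of Lemma~\ref{Lem:depth-codepth} yields $\inf\{r\mid\cdepth_{\CS}\Tateext_R^{\leq r}(M,\k)=0\}=1-s^*=t+1$ rather than $t$. The same discrepancy surfaces if you instead route through Proposition~\ref{Prop: Rdual and Cocrdeg}, which builds in the $-1$ that you are implicitly dropping when you write $t=-s^*$ and $s^*=\sup\{\cdots\}$ in adjacent lines with the same symbol $s^*$. So either Lemma~\ref{Lem:depth-codepth} is secretly stated in terms of degrees on $\comp{C}^*$ rather than on $\Sigma^{-1}\comp{C}^*$ (in which case your appeal to Proposition~\ref{Prop:cr-cohom} must be adjusted to match, not the lemma), or a one-unit correction is needed somewhere in the chain. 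You correctly anticipated this as the main obstacle; the proposal is not complete until you pin down exactly which complete resolution (and hence which grading on $\Tateext$) each cited result uses, and you should expect a nontrivial cancellation rather than a trivial ``reads off cleanly.''
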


\begin{proof}
	Note that if we take $s+1$ to be the \emph{lowest} degree such that there exists a non zero-divisor on $\Tateext_R^{>r}(M^*,\k)$, then $s$ is the highest degree of a nonzero element in $\soc(\gTateext{R}{M^*}{\k})$. And since this correlates to the \emph{highest} degree for which there exists a generating (coregular) element, apply \cite[\emph{Theorem 2}]{DCC-mat} to see that $-s$ should be the \emph{lowest} degree for which there exists a nonzero element in $\cosoc(\gTateext{R}{M}{\k})$. Further note that since $\depth_{\ZZ}\Ext_R(M^*,\k)$ coincides with $\depth_{\CS}\Ext_R(M^*,\k)$ by \cite[7.2]{cid}, there must exist a nonzero coregular element from $\x$ on the greatest truncation of $\Tateext_R^{<-r}(M,\k)$ for which there exists such a generating element. That is to say, there exists some $\chi\in\x$ such that $\chi\Tateext_R^{\leq t}(M,\k)=\Tateext_R^{\leq t}(M,\k)$ where $t=\cocrdeg{}{R}{C}$. Thus, the cocritical degree of a complex over a complete intersection is (negatively) finite and, %\footnote{This holds even for a complex with $\CIdim(\im\doug^{\comp{C}}_0)<\infty$.}; 
	moreover, the equalities in the proposition hold.
\end{proof}

\subsection{An Eventual Degree-wise Epimorphism and Monomorphism} We now explore question (i.) presented at the end of Section \ref{sec:crcodeg}. Our goal is to employ an analogous argument to the proof of \cite[Thereom 3.1]{Eisenbud} in order to demonstrate existence of an endomorphism on $\comp{C}$ in $\Ktac{R}$ that is eventually degree-wise surjective on the left and split injective on the right.
\begin{thm}
	Let $Q$ be a commutative local, regular ring with infinite residue $\k$ and $R$ a complete intersection of the form $Q/\f$ where $\f$ is a regular $Q$-sequence and $\codim(R,Q)=c$. Further let $\comp{C}$ be a totally acyclic $R$-complex with minimal subcomplex $\mincomp{C}$. Then there exists a degree $2$ chain endomorphism such that $$\bfu\!:\bar{C}_{n+2}\to \bar{C}_n$$ is an epimorphism for all $n\gg 0$ and a monomorphism for all $n+2\ll 0$.
\end{thm}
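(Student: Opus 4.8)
The plan is to produce $\bfu$ as a generic $\k$-linear combination $\sum_{j=1}^{c}a_j t_j$ of the CI operators on $\mincomp{C}$, the coefficients being chosen so that the associated linear form $\ell=\sum_{j}a_j\chi_j$ in the ring of cohomology operators $\Ck=\k[\chi_1,\dots,\chi_c]$ is \emph{simultaneously} eventually a non zero-divisor on $\gTateext{R}{M}{\k}$ in large positive degrees and eventually a non zero-divisor on the analogous graded module attached to $\comp{C}^{*}$. Granting such an $\ell$, the first property will force $\bfu$ to be epimorphic on the left --- this is exactly the mechanism behind \cite[Theorem 3.1]{Eisenbud} and \cite[Proposition 7.2]{cid} --- while the second, transported across the duality of Lemma \ref{Lem:sur-inj-maps1}, will force $\bfu$ to be split injective on the right.

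First I would reduce to $\comp{C}=\mincomp{C}$ minimal and $\comp{C}\not\homeq 0$ (the zero complex being trivial; note all $\bar{C}_n\neq0$, so the statement is non-vacuous), and set $M=\im\doug^{\comp{C}}_0$ and $M'=\im\doug^{\comp{C}^{*}}_0$; both are maximal Cohen--Macaulay $R$-modules, being cokernels of differentials in totally acyclic complexes. By Gulliksen \cite{gu} (cf.\ \cite{minfreeresCI} and the discussion in Section \ref{sec:Prelim}), $E:=\bigoplus_{i\geq 0}\Tateext^i_R(M,\k)$ and $E':=\bigoplus_{i\geq 0}\Tateext^i_R(M',\k)$ are finitely generated graded $\Ck$-modules. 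Writing $\x=(\chi_1,\dots,\chi_c)$, the $\x$-torsion submodules $H^0_{\x}(E)$ and $H^0_{\x}(E')$ are of finite length, hence concentrated in bounded degrees, so for $N\gg 0$ the truncations $E^{\geq N}$ and $(E')^{\geq N}$ are $\x$-torsion-free; in particular each of their (finitely many) associated primes is a homogeneous prime not containing $\x$, and therefore meets the $\k$-vector space $V:=\k\chi_1\oplus\dots\oplus\k\chi_c$ in a proper subspace. Since $\k$ is infinite, $V$ is not the union of this finite family of proper subspaces, and I would choose $\ell=\sum_j a_j\chi_j\in V$ outside all of them. Then $\ell$ is a non zero-divisor on $E^{\geq N}$ and on $(E')^{\geq N}$; equivalently, multiplication by $\ell$ is injective on $\Tateext^n_R(M,\k)$ and on $\Tateext^n_R(M',\k)$ for every $n\geq N$.

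Next I would put $\bfu=\sum_{j}a_j t_j\colon\mincomp{C}\to\shiftq{2}{\mincomp{C}}$, a degree $2$ chain endomorphism, which exists as such on $\mincomp{C}$ by Proposition \ref{Prop:min-induced-map}. For the left end: by Lemma \ref{Lem:Ext-compVmod} together with the dimension-shift isomorphism $\Tateext^n_R(M,\k)\cong\Tateext^{n+2}_R(\Omega^{-2}M,\k)$ of \cite[Lemma 4.3]{tate}, multiplication by $\ell$ on $\gTateext{R}{M}{\k}$ is realized by $\bfu$ in the sense of Proposition \ref{Prop:tri-cr&cocr}, and the argument there shows that its injectivity for all $n\gg0$ is equivalent to surjectivity of $\bfu\colon\bar{C}_{n+2}\to\bar{C}_n$ for all $n\gg0$; thus $\bfu$ is an epimorphism for $n\gg0$. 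For the right end: the cohomology operators act on $\gTateext{R}{M'}{\k}=\H^{*}\Hom_R(\comp{C}^{*},\k)$ through the CI operators $t_j^{*}$ of $\comp{C}^{*}$, which componentwise are the transposes of the $t_j$ --- this follows by dualizing the defining identity $\widetilde{\doug}^{\,2}=\sum_j f_j\widetilde{t}_j$ over $Q$ --- so that $\bfu^{*}:=\sum_j a_j t_j^{*}$ satisfies $\bfu^{*}_{n+2}=(\bfu_{-n})^{T}$ under $C^{*}_m\cong\Hom_R(C_{-m},R)$. Applying the left-end argument to $\comp{C}^{*}$, injectivity of $\ell$-multiplication on $\gTateext{R}{M'}{\k}$ for $n\gg0$ is equivalent to surjectivity of $(\bfu_{-n})^{T}\colon\bar{C}^{*}_{n+2}\to\bar{C}^{*}_n$ for $n\gg0$, and by Lemma \ref{Lem:sur-inj-maps1} this is equivalent to split injectivity of $\bfu\colon\bar{C}_{-n}\to\bar{C}_{-n-2}$ for $n\gg0$. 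Reindexing by $m=-n$, $\bfu\colon\bar{C}_{m}\to\bar{C}_{m-2}$ is split injective for all $m\ll0$; that is, $\bfu\colon\bar{C}_{n+2}\to\bar{C}_n$ is a monomorphism for all $n+2\ll0$. Combining the two ends yields the theorem for this $\bfu$.

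The crux --- and the genuinely new point beyond \cite{Eisenbud} and \cite{cid} --- is the simultaneous choice of $\ell$ in the second paragraph: one must dodge the associated primes coming from the positive tail of $M$ \emph{and} those coming from the positive tail of $M'$ at once, so that a single CI-operator combination controls $\mincomp{C}$ at both $+\infty$ and $-\infty$. Everything else is bookkeeping with the duality and shift lemmas already established together with the compatibility of the Eisenbud operators under $R$-dualization, and the hypothesis that $\k$ be infinite enters precisely to make the prime-avoidance step run.
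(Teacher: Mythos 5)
Your proposal is correct and follows essentially the same strategy as the paper's proof: reduce to the minimal complex, form the two noetherian graded $\Ck$-modules coming from the positive tails of $\gTateext{R}{M}{\k}$ and of the dual, kill the bounded torsion by truncating, and use homogeneous prime avoidance (this is exactly where the infiniteness of $\k$ enters) to pick a single linear form in the CI operators that is a non zero-divisor on both truncations, then translate injectivity on Tate cohomology into degree-wise surjectivity on the left and, via $R$-dualization of the Eisenbud operators together with Lemma~\ref{Lem:sur-inj-maps1}, into degree-wise split injectivity on the right. The only cosmetic differences are that you phrase the torsion-bounding step through $H^0_{\x}$ rather than through finiteness of the socle, and you route the right-end argument directly through the CI operators on $\comp{C}^{*}$ instead of invoking Lemma~\ref{Lem:depth-codepth}, but these encode the same facts.
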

\begin{proof}
	First, set $M=\Im\doug^{\comp{C}}_0$ and $M^*=\Hom_R(M,R)$. Our goal is to demonstrate that there exists some $\chi$ in the ideal $\x$ of $\Ck=\k[\chi_1,\dots,\chi_c]$ such that $\chi$ is both a non zero-divisor on $\Tateext_R^{>s}(M,\k)$ for some $s\in\Z$ and $\chi\Tateext_R^{<t}(M,\k)=\Tateext_R^{<t}(M,\k)$ for some $t\in\Z$. As  $\gTateext{R}{M}{\k}$ and $\gTateext{R}{M^*}{\k}$ are both graded modules over $\Ck$, the action of this ring remains the same on each of the modules. For ease of discussion, denote by $E_1=\Tateext_{R}^{\geq 0}(M,\k)=\Ext_{R}^{\geq 0}(M,\k)$ and $E_2=\Tateext_{R}^{\geq 0}(M^*,\k)=\Ext_{R}^{\geq 0}(M^*,\k)$ so that $E_1$ and $E_2$ are both noetherian modules over $\Ck$. Furthermore, denote by $A_i=\soc(E_i)$ for each $i=1,2$ and note each $A_i$ is an artinian submodule of the respective graded Tate module. Now, as each $A_i$ is both artinian and noetherian, it is of finite length, implying there exists some $$N_i=\sup\{n\in\N\cup\{0\}\,|\,\deg(x)=n \text{ for } x\in A_i\}$$ for each $i=1,2$. Set $E_1^{>N_0}=\Ext_{R}^{>N_0}(M,\k)$ and $E_2^{>N_0}=\Ext_{R}^{>N_0}(M^*,\k)$ where $N_0=\max\{N_1,N_2\}$, so that neither truncation contains a nonzero element annihilated by $\x$. Then denote by $P_1,\dots,P_q$ the associated primes of $0\in E_1^{>N_0}$ and $Q_1,\dots,Q_r$ the associated primes of $0\in E_2^{>N_0}$. Hence, $$P_1\cup\cdots\cup P_q\cup Q_1\cup\cdots\cup Q_r$$ contains the set of zero-divisors on both $E_1^{>N_0}$ and $E_2^{>N_0}$. Now consider the set $$\chi_1+\sum_{i=2}^c\k \chi_i,$$
	which generates $\x$, so it cannot be contained in any $P_k$ or $Q_k$ since there is no element of $\x$ which is a zero-divisor on $E_i^{\geq N_0}$. Moreover, $\k$ is infinite, thus yielding a translation of this set %$\chi_1+\sum_{i=2}^c\k \chi_i$ 
	which is a subspace of $\Ck$ and this subspace cannot be contained within $P_1\cup\cdots\cup P_q\cup Q_1\cup\cdots\cup Q_r$. Meaning, there exists a linear form $$\hat{\chi}=\chi_1+\sum_{i=2}^c \alpha_j \chi_j$$ with $\alpha_j\in\k$ such that $\chi$ is a non zero-divisor on both $E_1^{\geq N_0}$ and $E_2^{\geq N_0}$. For each $j=1,\dots,c$ set $a_j$ equal to a pre-image of $\alpha_j$ in $R$ so that $$\chi=\chi_1+\sum_{i=2}^c a_j \chi_j\in\CS.$$
	Lastly, note that $\chi$ is a non zero-divisor on $E_1^{\geq N_0}$ if and only if $$\chi^{n+2}\!:\Ext_R^{n}(M,\k)\to\Ext_R^{n+2}(M,\k)$$ is injective for all $n>N_0$ if and only if $$\bfu=u_1+\sum_{i=2}^c a_j u_j$$ is surjective for all $n>N_0$, where $\chi=\Hom_R(\bfu, \k)$. Similarly, we see that $\chi$ is additionally a non zero-divisor on $E^{\geq N_0}_2=\Ext_R^{\geq N_0}(M^*,\k)$. As demonstrated in Lemma \ref{Lem:depth-codepth}, there is a one-to-one correspondence between regular elements on $\Ext_R^{\geq N_0}(M^*,\k)$ and coregular elements on $\Tateext_R^{\leq -N_0}(M,\k)$. Consequently, $\chi$ is ``generating element'' of $\Tateext_R^{\leq -N_0}(M,\k)$ if and only if $$\chi^{n+2}\!:\Ext_R^{n}(M,\k)\to\Ext_R^{n+2}(M,\k)$$ is surjective for all $n+2<-N_0$ if and only if
	$$\chi^m\!:\Ext_R^{m-2}(M,\k)\to\Ext_R^{m}(M,\k)$$  
	is surjective for all $m<-N_0$ if and only if the linear form $\bfu\!:\bar{C}_{n}\to \bar{C}_{n-2}$ is split injective for all $n<-N_0$.
\end{proof}

While this illustrates that at least one of $\crdeg{}{R}{\comp{C}}$ or $\cocrdeg{}{R}{\comp{C}}$ is realized by either $N_1$ or $-N_2$, the question remains on whether $N_1=N_2$ in the proof above, implying that the critical and cocritical degrees are realized by the \emph{same} degree $2$ endomorphism. We suspect this holds at least for the complete resolution of the residue field.

\begin{cor}\label{cor:CIop-crcodeg}
	Let $Q$ be a commutative local, regular ring with infinite residue $\k$ and $R$ a complete intersection of the form $Q/\f$ where $\f$ is a regular $Q$-sequence and $\codim(R,Q)=c$. Further let $\comp{C}$ be a totally acyclic $R$-complex with minimal subcomplex $\mincomp{C}$. Then one of the following cases must hold:
	\begin{enumerate}
		\item[(i.)] If $\crdeg{\mu}{R}{\comp{C}}=\crdeg{}{R}{\comp{C}}$, then $\mu$ is a linear form in $\Ck$ and $$\cocrdeg{\mu}{R}{\comp{C}}\leq\cocrdeg{}{R}{\comp{C}}.$$
		\item[(ii.)] If $\cocrdeg{\mu}{R}{\comp{C}}=\cocrdeg{}{R}{\comp{C}}$, then $\mu$ is a linear form in $\Ck$ and $$\crdeg{\mu}{R}{\comp{C}}\geq\crdeg{}{R}{\comp{C}}.$$
	\end{enumerate}
\end{cor}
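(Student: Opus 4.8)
The plan is to obtain the corollary by assembling the preceding theorem with the cohomological characterizations of Propositions \ref{Prop:cr-cohom} and \ref{Prop:cocr-cohom}; there is no new idea, only bookkeeping. I would begin by disposing of the two easy halves. For any endomorphism $\mu$ realizing the critical (or cocritical) degree, the claim that the action of $\mu$ on $\gTateext{R}{M}{\k}$ is a linear form in $\Ck$ is precisely \cite[Theorem 3.1]{Eisenbud}, as already noted after Definition \ref{Def:cocrdeg}; indeed the proof of the preceding theorem produces such a realizer explicitly as the linear form $\bfu = u_1 + \sum_{j=2}^{c} a_j u_j$ in the CI operators, whose dual $\Hom_R(\bfu,\k)$ is multiplication by $\chi_1 + \sum_{j=2}^{c}(a_j+\m)\chi_j \in \Ck$. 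The accompanying inequalities are formal: in (i.) the bound $\cocrdeg{\mu}{R}{\comp{C}} \leq \cocrdeg{}{R}{\comp{C}}$ is immediate since, by Definition \ref{Def:cocrdeg}, $\cocrdeg{}{R}{\comp{C}}$ is the supremum of $\cocrdeg{\mu}{R}{\comp{C}}$ over all $\mu$; symmetrically, in (ii.) the bound $\crdeg{\mu}{R}{\comp{C}} \geq \crdeg{}{R}{\comp{C}}$ holds because $\crdeg{}{R}{\comp{C}}$ is, by Definition \ref{Def:crdeg}, the infimum.

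It remains to justify ``one of the following cases must hold,'' i.e.\ that at least one of the two hypotheses is non-vacuous; for this I would invoke the linear form $\bfu$ from the preceding theorem. Write $N_0 = \max\{N_1,N_2\}$ as in that proof, where $N_1$ is the top degree of a nonzero element of $\soc(\Ext_R^{\geq 0}(M,\k))$ and $N_2$ is the analogue for $M^* = \Hom_R(M,R)$. The theorem gives that $\bfu$ is degree-wise surjective above $N_0$ and split injective below $-N_0$, so $\crdeg{\bfu}{R}{\comp{C}} \leq N_0$ and $\cocrdeg{\bfu}{R}{\comp{C}} \geq -N_0$; meanwhile Proposition \ref{Prop:cr-cohom} identifies $\crdeg{}{R}{\comp{C}}$ with $N_1$ (reducing to the non-negative range handled in \cite[(7.2)]{cid} by shifting via Proposition \ref{Prop:crco-shifts}), and Propositions \ref{Prop: Rdual and Cocrdeg} and \ref{Prop:cocr-cohom} identify $\cocrdeg{}{R}{\comp{C}}$ with $-N_2$ up to the degree relabelling between $\comp{C}$ and its dual complete resolution. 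If $N_1 \geq N_2$, then $N_0 = N_1 = \crdeg{}{R}{\comp{C}}$, and combined with $\crdeg{}{R}{\comp{C}} \leq \crdeg{\bfu}{R}{\comp{C}} \leq N_0$ this forces $\crdeg{\bfu}{R}{\comp{C}} = \crdeg{}{R}{\comp{C}}$, so case (i.) applies with $\mu = \bfu$. If $N_2 \geq N_1$, the symmetric computation on the split-injective side forces $\cocrdeg{\bfu}{R}{\comp{C}} = \cocrdeg{}{R}{\comp{C}}$, so case (ii.) applies. Since one of $N_1 \geq N_2$ and $N_2 \geq N_1$ always holds, one of the two cases obtains.

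I expect the only genuinely delicate point to be the degree bookkeeping in the second step: matching the socle/cosocle conventions for $\gExt{R}{-}{\k}$ against those for $\gTateext{R}{-}{\k}$, tracking the cohomological-versus-homological indexing on $\comp{C}$ and $\dual{C}$, and accounting for the $-1$ shift in Proposition \ref{Prop: Rdual and Cocrdeg}, so that the cutoff $N_0$ coming out of the theorem lands exactly on $\crdeg{}{R}{\comp{C}}$ (resp.\ $\cocrdeg{}{R}{\comp{C}}$) rather than off by one. Once these identifications are pinned down the corollary follows immediately.
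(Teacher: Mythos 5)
The paper does not supply a proof for this corollary, so there is no argument of the paper's to compare against directly; the remark immediately preceding the corollary, however, makes clear that the authors intend it as a formalization of the observation that ``at least one of $\crdeg{}{R}{\comp{C}}$ or $\cocrdeg{}{R}{\comp{C}}$ is realized by either $N_1$ or $-N_2$,'' and your proposal tracks that intent faithfully: the linear-form assertion via Eisenbud and the explicit $\bfu$ constructed in the preceding theorem, the two inequalities as immediate consequences of $\crdeg{}{R}{\comp{C}}$ being an infimum and $\cocrdeg{}{R}{\comp{C}}$ a supremum, and the case split on $N_1\geq N_2$ versus $N_2\geq N_1$ to see that $\bfu$ realizes at least one of the two extremal degrees.

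That said, the step you flag as ``delicate bookkeeping'' is in fact where the argument is not yet sound as written. Your case $N_2\geq N_1$ runs: $\cocrdeg{\bfu}{R}{\comp{C}}\geq -N_0=-N_2$ and you want to conclude $\cocrdeg{\bfu}{R}{\comp{C}}=\cocrdeg{}{R}{\comp{C}}$, which requires $\cocrdeg{}{R}{\comp{C}}=-N_2$. But Proposition \ref{Prop: Rdual and Cocrdeg} gives $\cocrdeg{}{R}{\comp{C}}=-s^*-1$ with $s^*=\crdeg{}{R}{M^*}$; identifying $s^*$ with $N_2$ (which is what the theorem's socle computation for $E_2=\Ext_R^{\geq 0}(M^*,\k)$ supplies when $s^*\geq 0$) yields $\cocrdeg{}{R}{\comp{C}}=-N_2-1$, not $-N_2$. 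Combining this with the theorem's split-injectivity bound one would then get $-N_2\leq\cocrdeg{\bfu}{R}{\comp{C}}\leq\cocrdeg{}{R}{\comp{C}}=-N_2-1$, which is impossible. So either the cutoff in the theorem's conclusion (``split injective for all $n<-N_0$'') needs to be pushed down by one, or the identification of $N_2$ with $s^*$ needs an adjustment, or Proposition \ref{Prop: Rdual and Cocrdeg}'s shift has to be accounted for differently; the paper itself seems to carry this discrepancy forward through Lemma \ref{Lem:depth-codepth} into the theorem, so you have inherited rather than introduced the problem, but a complete proof of the corollary must reconcile it. You also gloss over the edge case $\crdeg{}{R}{\comp{C}}<0$, in which $N_1$ (defined as a supremum over $\N\cup\{0\}$) no longer equals $\crdeg{}{R}{\comp{C}}$ and the socle of $E_1=\Ext_R^{\geq 0}(M,\k)$ may vanish outright, so the reduction ``by shifting via Proposition \ref{Prop:crco-shifts}'' needs to be carried out explicitly before the dichotomy $N_1\gtrless N_2$ can be invoked.
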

%\Rebe{Need additional argument here. Would need to say $A_1=\soc(E_1)$ OR $A_2=\soc(E_2)$, correct?}
%\Rebe{For special cases of $M$, it seems that the answer should be clear. For instance, if $M=\k$, then $\Hom_R(\k,R)\cong\k$ is it not? Thus, in this case, $N_1=N_2$ which is both the critical and cocritical degree. This is already known since the complete resolution of $\k$ is always symmetric. However, this proof tells us that to know how ``asymmetric'' the growth is on the tails, we want to understand the difference between the associated primes on zero in  $\Ext_R^{>N_0}(M,\k)$ and the associated primes on zero in  $\Ext_R^{>N_0}(M^*,\k)$. This is probably already known and adds nothing insightful.}

\section{Basic Operations in $\Ktac{R}$ and Critical Diameter}\label{sec:diam}

\subsection{Direct Sums and Retracts} We begin with discussing critical degree of the (co)product in $\Ktac{R}$, given by the direct sum of two $R$-complexes 
$$\xymatrix @R=0.25cm @C=0.25cm { \comp{C} \ar@{_{(}->} [dr]^{\iota_{\comp{C}}} & & \comp{D} \ar@{^{(}->} [dl]_{\iota_{\comp{D}}} \\ &  \Csum{C}{D} \ar@{->>}[dl]^{\pi_{\comp{C}}} \ar@{->>}[dr]_{\pi_{\comp{D}}} & \\ \comp{C}& & \comp{D} }$$
where $\Csum{C}{D}$ is the complex with $R$-modules $(\Csum{C}{D})_n=C_n\oplus D_n$ and $R$-module homomorphisms 
$$\doug^{\Csum{C}{D}}_n=\begin{pmatrix}
\doug_n^{\comp{C}} & 0 \\ 0 & \doug_n^{\comp{D}}
\end{pmatrix}.$$
%As $\comp{C}$ and $\comp{D}$ are totally acyclic, so is $\Csum{C}{D}$ since $\H_n(\Csum{C}{D})=\H_n(\comp{C})\oplus\H_n(\comp{D})$ and $\Hom_R(\Csum{C}{D},R)=\Hom_R(\comp{C},R)\oplus\Hom_R(\comp{D},R)$.
Note that minimality is preserved over direct sums, where the decomposition of $\Csum{C}{D}$ is given by $\Csum{(\mincomp{\Csum{C}{D}})}{\comp{T}}$ with $\mincomp{\Csum{C}{D}}=\Csum{\mincomp{C}}{\mincomp{D}}$ and $\comp{T}$ contractible.

\begin{prop}\label{Prop:sums-crco}
	Let $R$ be a complete intersection of the form $Q/(\f)$, with $\f=f_1,\dots, f_c$ a regular $Q$-sequence. Further suppose $\comp{C}\in\Ktac{R}$ and $\comp{D}\in\Ktac{R}$, so that $\Csum{C}{D}\in\Ktac{R}$. Denote by $\crdeg{}{R}{\comp{C}}=s_1$, $\crdeg{}{R}{\comp{D}}=s_2$, $\cocrdeg{}{R}{\comp{C}}=t_1$, and $\cocrdeg{}{R}{\comp{D}}=t_2$. Then: $$\crdeg{}{R}{(\Csum{C}{D})}=\max\{s_1,s_2\},\text{ and }$$ $$\cocrdeg{}{R}{(\Csum{C}{D})}=\min\{t_1,t_2\}.$$
\end{prop}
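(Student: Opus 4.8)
The plan is to prove the two equalities by first producing endomorphisms that witness the claimed bounds, and then showing no endomorphism can do better. The key structural fact is that the minimal subcomplex of $\Csum{C}{D}$ is $\Csum{\mincomp{C}}{\mincomp{D}}$, as noted just before the statement, so by Proposition \ref{Prop:hom-stable} we may assume $\comp{C}$ and $\comp{D}$ are themselves minimal and work with $\comp{C}\oplus\comp{D}$ directly.

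\textbf{Upper bound direction (exhibiting a good endomorphism).} Suppose the critical degrees of $\comp{C}$ and $\comp{D}$ are realized by endomorphisms $\mu\!:\comp{C}\to\shiftq{q_1}{\comp{C}}$ and $\nu\!:\comp{D}\to\shiftq{q_2}{\comp{D}}$. By Proposition \ref{Prop:crco-shifts}, applying a suitable number of translations to one of the two complexes does not change whether it is minimal and shifts its critical degree by a controlled amount — but for a direct-sum endomorphism we actually need $\mu$ and $\nu$ to have the \emph{same} degree $q$. Here I would invoke the remark after Definition \ref{Def:crdeg}: since $R$ is a complete intersection, the critical degree is realized by a linear form in the CI operators (via \cite[Theorem 3.1]{Eisenbud}), which are degree $-2$ endomorphisms; hence both $\mu$ and $\nu$ may be taken of degree $q=2$. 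Then $\mu\oplus\nu\!:\comp{C}\oplus\comp{D}\to\shiftq{2}{(\comp{C}\oplus\comp{D})}$ is block-diagonal, so $(\mu\oplus\nu)_{n+2}=\mu_{n+2}\oplus\nu_{n+2}$, and a direct sum of module maps is surjective iff both summands are. Thus $(\mu\oplus\nu)_{n+2}$ is surjective for all $n>\max\{s_1,s_2\}$, giving $\crdeg{\mu\oplus\nu}{R}{\comp{C}\oplus\comp{D}}\le\max\{s_1,s_2\}$, hence $\crdeg{}{R}{\comp{C}\oplus\comp{D}}\le\max\{s_1,s_2\}$. The cocritical case is identical, using that a direct sum of module maps is split injective iff both summands are split injective (the splitting is the direct sum of the splittings), giving $\cocrdeg{}{R}{\comp{C}\oplus\comp{D}}\ge\min\{t_1,t_2\}$.

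\textbf{Lower bound direction (no endomorphism does better).} Conversely, let $\psi\!:\comp{C}\oplus\comp{D}\to\shiftq{q}{(\comp{C}\oplus\comp{D})}$ be \emph{any} chain endomorphism realizing the critical degree of the sum. The idea is to extract from $\psi$ an endomorphism of $\comp{C}$ alone: compose with the inclusion $\iota_{\comp{C}}$ and the projection $\pi_{\comp{C}}$ to form $\mu':=\pi_{\comp{C}}\circ\psi\circ\iota_{\comp{C}}\!:\comp{C}\to\shiftq{q}{\comp{C}}$ (all of $\iota_{\comp{C}},\pi_{\comp{C}}$ being chain maps and $\shiftq{q}{}$ a functor, this is a chain endomorphism). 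Degree-wise, $\psi_n$ is a $2\times 2$ block matrix; if $\psi_n$ is surjective onto $C_{n-q}\oplus D_{n-q}$ then its ``$C$-to-$C$ plus $D$-to-$C$'' row is jointly surjective onto $C_{n-q}$, but by minimality the off-diagonal block $D_n\to C_{n-q}$ lands in $\m C_{n-q}$ (since $\psi$ raises $\m$-filtration appropriately — here I would argue via Nakayama that a block map onto a free module which is surjective must already be surjective after killing the entries with image in $\m$, i.e.\ the diagonal block $\mu'_n$ alone is surjective). So surjectivity of $\psi_{n+q}$ forces surjectivity of $\mu'_{n+q}$, and symmetrically of the $D$-block; hence $\crdeg{\mu'}{R}{\comp{C}}\le\crdeg{\psi}{R}{\comp{C}\oplus\comp{D}}$ and likewise for $\comp{D}$, yielding $\max\{s_1,s_2\}\le\crdeg{}{R}{\comp{C}\oplus\comp{D}}$. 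For cocritical degree the same extraction applies: if $\psi_n$ is split injective then restricting the splitting to the $C$-summand and projecting shows $\mu'_n=\pi_{\comp{C},n-q}\,\psi_n\,\iota_{\comp{C},n}$ is split injective (compose $\psi_n$'s left inverse with $\iota_{\comp{C},n-q}$ after... more precisely: a left inverse $\rho$ of $\psi_n$ gives $\pi_{\comp{C},n}\rho\iota_{\shiftq{q}{\comp{C}},n-q}$ as a left inverse of $\mu'_n$, using $\pi\iota=\mathrm{id}$ on each summand), so $\min\{t_1,t_2\}\ge\cocrdeg{}{R}{\comp{C}\oplus\comp{D}}$.

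\textbf{Main obstacle.} I expect the delicate point to be the lower-bound direction: squeezing a useful single-summand endomorphism out of an arbitrary $\psi$, because $\psi$ need not be block-diagonal. The surjectivity extraction is the crux — one must rule out the possibility that the off-diagonal blocks ``help'' $\psi_n$ be surjective without the diagonal block being so. This is exactly where minimality of $\comp{C}\oplus\comp{D}$ enters: the differentials, and hence (by naturality of the CI operators / the homotopy condition) the entries of $\psi$ that mix the two summands, have image in the maximal ideal, so Nakayama's lemma forces the diagonal blocks to carry the surjectivity by themselves. The split-injectivity extraction is comparatively soft once one writes down the splittings carefully. A final bookkeeping remark: one should double-check the degree-matching issue in the lower bound — $\psi$ has some degree $q$ which may differ from $2$, but the extraction argument above works verbatim for any $q$, so no appeal to \cite{Eisenbud} is needed there; it is only needed in the upper bound to align the degrees of $\mu$ and $\nu$.
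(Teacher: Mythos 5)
Your upper-bound direction (exhibiting $\mu\oplus\nu$) is sound, and you correctly spot the degree-matching subtlety. The problem is the lower-bound direction, where the argument has a genuine gap, and moreover the diagnosis in your ``main obstacle'' paragraph is incorrect.

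Minimality of $\comp{C}\oplus\comp{D}$ controls the differentials, not arbitrary chain endomorphisms. A chain endomorphism of a minimal complex can — and typically does — have unit entries; the CI operators themselves are examples (cf.\ the explicit $t_{x,n}, t_{y,n}$ matrices in Section~\ref{sec:diam}, whose entries are $0$'s and $1$'s). So the claim that ``the entries of $\psi$ that mix the two summands have image in the maximal ideal'' is false, and the Nakayama step collapses. A concrete counterexample to the extraction: take $\comp{D}=\comp{C}$ and let $\mu\!:\comp{C}\to\shiftq{q}{\comp{C}}$ realize $\crdeg{}{R}{\comp{C}}=s_1$. Then $\psi=\begin{psmallmatrix}0&\mu\\\mu&0\end{psmallmatrix}$ is a chain endomorphism of $\comp{C}\oplus\comp{C}$ with $\crdeg{\psi}{R}{\comp{C}\oplus\comp{C}}=s_1$, but your $\mu'=\pi_{\comp{C}}\psi\iota_{\comp{C}}=0$ is never surjective, so $\crdeg{\mu'}{R}{\comp{C}}=\infty$; the inequality $\crdeg{\mu'}{R}{\comp{C}}\le\crdeg{\psi}{R}{\comp{C}\oplus\comp{D}}$ fails badly. (Even when $C_{n-q}$ has rank $\ge 2$, joint surjectivity of a block row $(a_n\mid b_n)$ does not force either block to be surjective alone.)

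There are two ways to close the gap, and the paper takes the first. (1)~The paper's proof works at the level of the graded $\CS$-module $\gTateext{R}{M\oplus N}{\k}\cong\gTateext{R}{M}{\k}\oplus\gTateext{R}{N}{\k}$, where the $\CS$-action is automatically diagonal; a nonzero socle (resp.\ cosocle) element of the sum must have a nonzero component in one summand, so $\crdeg{}{R}{\comp{C}\oplus\comp{D}}=\max\{s_1,s_2\}$ and $\cocrdeg{}{R}{\comp{C}\oplus\comp{D}}=\min\{t_1,t_2\}$ drop out of Propositions~\ref{Prop:cr-cohom} and~\ref{Prop:cocr-cohom}. No block-extraction at the chain level is needed. (2)~Alternatively, your extraction \emph{would} be valid if you restricted attention to $\psi$ a linear form in the CI operators on $\comp{C}\oplus\comp{D}$: by naturality of the CI operators, $t_j^{\comp{C}\oplus\comp{D}}=t_j^{\comp{C}}\oplus t_j^{\comp{D}}$ is genuinely block-diagonal (zero off-diagonal, not merely in $\m$), and by the remark following Definition~\ref{Def:crdeg} such a $\psi$ realizes the critical degree. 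This is exactly the opposite of your closing remark that ``no appeal to \cite{Eisenbud} is needed'' in the lower bound — in your route, it is needed precisely there.
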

\begin{proof}
	Suppose $s=\crdeg{}{R}{(\Csum{C}{D})}$. Further let $M=\Im\doug^{\comp{C}}_0$ and $N=\Im\doug^{\comp{D}}_0$, so that $M\oplus N=\Im\doug^{\Csum{C}{D}}_0$ noting that there exists an isomorphism between the $\CS$-modules $\Tateext_R^*(M\oplus N, \k)\cong\Tateext_R^*(M, \k)\oplus\Tateext_R^*(N, \k)$. So take $0\neq\mathfrak{x}\in\soc(\gTateext{R}{M}{\k})$ with $\deg(\mathfrak{x})=s_1$ and $0\neq\mathfrak{z}\in\soc(\gTateext{R}{N}{\k})$ with $\deg(\mathfrak{z})=s_2$. Meaning, $\mathfrak{x}$ is a nonzero socle element of highest degree in the former graded $\CS$-module and likewise for $\mathfrak{z}$ in the latter module. Given this, note that $(\mathfrak{x},0)\in\Tateext_R^{s_1}(M\oplus N, \k)$ and $(0,\mathfrak{z})\in\Tateext_R^{s_2}(M\oplus N, \k)$ must both be annihilated by the maximal ideal $\x\subseteq\CS$ and so, $s\geq \max\{s_1,s_2\}$. Conversely, we know there exists some $0\neq(\mathfrak{x'},\mathfrak{z'})\in\soc(\gTateext{R}{M\oplus N}{\k})$ with $\mathfrak{x'}\in\Tateext_R^s(M,\k)$ and $\mathfrak{z'}\in\Tateext_R^s(N,\k)$ such that $(\mathfrak{x'},\mathfrak{z'})\in(0:_{\hat{E}} \x)$. Therefore, either $\mathfrak{x'}$ or $\mathfrak{z'}$ (or both) must be nonzero and annihilated by $\x$, implying that $s=\max\{s_1,s_2\}$. This gives proof of the first equality.
	
	To show the second equality, we give an appropriate analogue to the argument for critical degree. First note that the existence of nonzero elements $\bar{\mathfrak{x}}\in \Tateext_{R}^{\leq t_1}(M,\k)/\x\Tateext_{R}^{\leq t_1}(M,\k)$ and $\bar{\mathfrak{z}}\in \Tateext_{R}^{\leq t_2}(N,\k)/\x\Tateext_{R}^{\leq t_2}(N,\k)$ implies the existence of nonzero elements $(\bar{\mathfrak{x}},0)\in \Tateext_{R}^{\leq t_1}(M\oplus N,\k)/\x\Tateext_{R}^{\leq t_1}(M\oplus N,\k)$ and $(0,\bar{\mathfrak{z}})\in \Tateext_{R}^{\leq t_2}(M\oplus N,\k)/\x\Tateext_{R}^{\leq t_2}(M\oplus N,\k)$. Hence, $t\leq t_1$ and $t\leq t_2$, so that $t\leq\min\{t_1,t_2\}$. Conversely, the existence of a nonzero element $(\bar{\mathfrak{x'}},\bar{\mathfrak{z'}})\in\Tateext_{R}^{\leq t}(M\oplus N,\k)/\x\Tateext_{R}^{\leq t}(M\oplus N,\k)$ implies that at least $\bar{\mathfrak{x'}}\neq 0$ or $\bar{\mathfrak{z'}}\neq 0$, thereby proving the equality $t=\min\{t_1,t_2\}$.
\end{proof}
\begin{rmk}
	Notice that we did not deal with the case of infinite critical or cocritical degrees. Recall that whenever $R$ is a complete intersection the critical degree of any $R$-complex (and $R$-module) will be positively finite and, likewise, the cocritical degree will always be negatively finite. If at least one of $\comp{C}$ or $\comp{D}$ is periodic, then the given statements (and arguments) still apply.
\end{rmk}
%\subsection{Retracts: Summands}\label{summands}
As $\Ktac{R}$ is a \textit{thick} subcategory of the homotopy category, $\CK(R)$, it contains all retracts. Thus, if a complex $\comp{E}$ in $\Ktac{R}$ can be written as a direct sum, say $\Csum{C}{D}$, then we want to consider the \crco of each summand $\comp{C}$ and $\comp{D}$. %We of course always have a decomposition $\comp{C}=\Csum{\mincomp{C}}{T}$ with $\comp{C}\homeq\mincomp{C}$ for each $\comp{C}$ in $\Ktac{R}$, and hence have an answer in this case. Rather, we are interested in the case when $\comp{E}$ is \textit{minimal}, %so that $\doug(\Csum{C}{D})\subseteq \m(\Csum{C}{D})$. Thus, the subcomplexes $\comp{C}$ and $\comp{D}$ must be minimal as well 
Whenever $\comp{E}$ is minimal, each summand  $\comp{C}$ and $\comp{D}$ must be minimal as well, and $\doug^{\comp{E}}=\doug^{\comp{C}}\oplus\doug^{\comp{D}}\subseteq \m\comp{C}\oplus\m\comp{D}$. %$$\doug^{\comp{E}}=\begin{pmatrix}\doug^{\comp{C}} & 0 \\ 0 & \doug^{\comp{D}}\end{pmatrix}.$$ %Suppose $\crdeg{}{R}{\comp{E}}=s$ and $\cocrdeg{}{R}{\comp{E}}=t$. 
For an endomorphism $\mu\!:\comp{E}\to\shiftq{q}{\comp{E}}$, note that we get \emph{four} induced maps (two on each summand): \begin{gather*}
\mu_1\!:\comp{C}\to\shiftq{q}{\comp{C}} \\
\mu_2\!:\comp{D}\to\shiftq{q}{\comp{C}}  \\
\mu_3\!:\comp{C}\to\shiftq{q}{\comp{D}} \\
\mu_4\!:\comp{D}\to\shiftq{q}{\comp{D}}. 
\end{gather*}
If $\crdeg{\mu}{R}{\comp{E}}=s$, then there is some  $\mu_{n+q}\!:C_{n+q}\oplus D_{n+q}\to C_n\oplus D_n$ that is surjective for all $n>s$. However, note that the surjectivity onto one summand, take $C_n$ for example, may not occur from $\mu_{1,(n+q)}$ alone as the map $\mu_{2,(n+q)}$ theoretically could contribute in part to the surjectivity. Likewise, if $\cocrdeg{\mu}{R}{\comp{E}}=t$, the same could occur with injectivity for $n<t$. In either case, it is difficult to deduce what the \crco are on each summand from Definitions \ref{Def:crdeg} and \ref{Def:cocrdeg} alone. Hence, we again employ Propositions \ref{Prop:cr-cohom} and \ref{Prop:cocr-cohom} to understand the critical and cocritical degrees of retracts in $\Ktac{R}$.

\begin{prop}\label{Prop:retracts-CrCo}
	Let $R=Q/(\f)$, with $\f=f_1,\dots, f_c$ a regular $Q$-sequence, and further suppose $\comp{E}\in\Ktac{R}$ with decomposition $\comp{E}=\Csum{C}{D}$ (neither $\comp{C}$ nor $\comp{D}$ contractible). If $\crdeg{}{R}{\comp{E}}=s$ then $\crdeg{}{R}{\comp{C}}\leq s$ and $\crdeg{}{R}{\comp{D}}\leq s$, with at least one being an equality. Likewise, if $\cocrdeg{}{R}{\comp{E}}=t$ then $\cocrdeg{}{R}{\comp{C}}\geq t$ and $\cocrdeg{}{R}{\comp{D}}\geq t$, with at least one being an equality.
\end{prop}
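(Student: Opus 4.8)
The plan is to reduce the statement directly to Proposition~\ref{Prop:sums-crco}. Since $\Ktac{R}$ is a thick subcategory of $\CK(R)$, it is closed under direct summands, so the decomposition $\comp{E}=\Csum{C}{D}$ forces $\comp{C}$ and $\comp{D}$ to be objects of $\Ktac{R}$; as neither is contractible, $\comp{C}\not\simeq 0\not\simeq\comp{D}$, so by Propositions~\ref{Prop:cr-cohom} and~\ref{Prop:cocr-cohom} all four (co)critical degrees in question are (negatively) finite and their cohomological characterizations apply.

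With this in hand, write $s_i$ and $t_i$ for the critical and cocritical degrees of the two summands. Proposition~\ref{Prop:sums-crco} gives $s=\crdeg{}{R}{\comp{E}}=\max\{s_1,s_2\}$, whence $\crdeg{}{R}{\comp{C}}=s_1\le s$ and $\crdeg{}{R}{\comp{D}}=s_2\le s$; and since the maximum of two integers is attained by at least one of them, one of these inequalities is an equality. Likewise, Proposition~\ref{Prop:sums-crco} gives $t=\cocrdeg{}{R}{\comp{E}}=\min\{t_1,t_2\}$, so $t\le t_1$ and $t\le t_2$, i.e.\ $\cocrdeg{}{R}{\comp{C}}\ge t$ and $\cocrdeg{}{R}{\comp{D}}\ge t$, with equality for whichever summand realizes the minimum.

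If a self-contained argument is preferred over citing Proposition~\ref{Prop:sums-crco}, one can re-run the proofs of Propositions~\ref{Prop:cr-cohom} and~\ref{Prop:cocr-cohom} with $M=\Im\doug^{\comp{C}}_0$ and $N=\Im\doug^{\comp{D}}_0$, using the isomorphism $\Tateext_R^*(M\oplus N,\k)\cong\Tateext_R^*(M,\k)\oplus\Tateext_R^*(N,\k)$ of graded $\CS$-modules: a nonzero socle (resp.\ cosocle) element of a truncation of the direct sum has a nonzero component in a truncation of one of the two factors at the same degree, and conversely every such element of a factor produces one for the sum. Tracking the highest-degree socle element (for the critical degree) and the lowest-degree cosocle element (for the cocritical degree) then yields exactly the $\max$/$\min$ relations above.

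There is no substantive obstacle, since Proposition~\ref{Prop:sums-crco} already contains the essential computation; the only point meriting a sentence is the observation that being handed $\comp{E}$ together with a decomposition is the same data as the direct sum construction, so that $\Im\doug^{\comp{E}}_0=(\Im\doug^{\comp{C}}_0)\oplus(\Im\doug^{\comp{D}}_0)$ and the $\CS$-module structures match up --- after which thickness does the rest.
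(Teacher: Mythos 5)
Your reduction is correct and is in fact shorter than the paper's own proof. The paper re-runs the socle and cosocle bookkeeping from Propositions \ref{Prop:cr-cohom} and \ref{Prop:cocr-cohom}, effectively reproving the content of Proposition \ref{Prop:sums-crco} inside the proof of Proposition \ref{Prop:retracts-CrCo}: it identifies $s$ as the top socle degree of $\gTateext{R}{M\oplus N}{\k}$, splits a witnessing socle element into components, and argues by contradiction that neither summand can exceed $s$. You observe instead that once $\comp{C}$ and $\comp{D}$ are known to lie in $\Ktac{R}$ (thickness of $\Ktac{R}$, or more concretely: direct summands of finite free modules over a local ring are free and total acyclicity passes to summands), Proposition \ref{Prop:sums-crco} applied to the given decomposition already says $s=\max\{s_1,s_2\}$ and $t=\min\{t_1,t_2\}$, from which the two inequalities and the ``at least one equality'' clauses are immediate because a max or a min of two integers is attained by one of them. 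This buys brevity and avoids duplicating an argument proved two displays earlier; the non-contractibility hypothesis then just ensures the relevant (co)critical degrees are finite so the $\max/\min$ reading is the interesting case. Your fallback self-contained variant mirrors what the paper actually writes. One small simplification to your own write-up: for the reduction route you do not even need the remark that $\Im\doug^{\comp{E}}_0$ splits as $(\Im\doug^{\comp{C}}_0)\oplus(\Im\doug^{\comp{D}}_0)$ with matching $\CS$-structures --- that identification is used inside the proof of Proposition \ref{Prop:sums-crco} and can be left encapsulated there.
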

%\Rebe{Update notation and shorten proof:}
\begin{proof}
	For simplicity, assume $\comp{E}$ is minimal and denote by $M\oplus N=\Im\doug^{\comp{E}}_0$ %=\Im\doug^{\Csum{\comp{C}}{\comp{D}}}_0$
	 where $M=\im\doug^{\comp{C}}_0$ and $N=\im\doug^{\comp{D}}_0$. Proposition \ref{Prop:cr-cohom} implies that $s$ is the maximal degree of a nonzero element $(\mathfrak{x},\mathfrak{z})\in\hat{E}=\gTateext{R}{M}{\k}\oplus\gTateext{R}{N}{\k} \cong\gTateext{R}{M\oplus N}{\k}$ with $(\mathfrak{x},\mathfrak{z})\in(0:_{\hat{E}}\x)$. Hence, $\mathfrak{x}\in\hat{E}_M=\Tateext_{R}^s(M,\k)$ and $\mathfrak{z}\in\hat{E}_N=\Tateext_{R}^s(N,\k)$ (with at least one nonzero) such that $\mathfrak{x}\in(0:_{\hat{E}_M}\x)$ and $\mathfrak{z}\in(0:_{\hat{E}_N}\x)$. This implies either $\crdeg{}{R}{\comp{C}}\geq s$ or $\crdeg{}{R}{\comp{D}}\geq s$ (or both, in the case that $\mathfrak{x}\neq 0 \neq \mathfrak{z}$). Now suppose $\crdeg{}{R}{\comp{C}}=s'\gneq s$ so that there exists some nonzero element $\mathfrak{x'}\in\Tateext_{R}^{s'}(M,\k)\subset\Tateext_{R}^{>s}(M,\k)$ such that $\mathfrak{x'}\x=0$. Of course, this would then imply the element $(\mathfrak{x'},0)\in\Tateext_{R}^{s'}(M\oplus N, \k)$ is annihilated by $\x$ thus contradicting $s$ as the highest degree socle element. The same argument can be applied to $\gTateext{R}{N}{\k}$, so that we see both $\crdeg{}{R}{\comp{C}}$ and $\crdeg{}{R}{\comp{D}}$ must be \emph{bounded above} by $s$. Lastly, it is straightforward to see that \emph{at least one} of $\crdeg{}{R}{\comp{C}}$ or $\crdeg{}{R}{\comp{D}}$ must be \emph{equal to} $s$. %If we suppose that both $\crdeg{}{R}{C}$, $\crdeg{}{R}{D}\lneq s$ so that the highest degree nonzero element $\mathfrak{x'}\in\soc(\gTateext{R}{M}{\k})$ has degree $s'$ and the highest degree nonzero element $\mathfrak{z'}\in\soc(\gTateext{R}{N}{\k})$ has degree $s''$. Then note that $(\mathfrak{x},\mathfrak{z})\not\in\soc(\gTateext{R}{M\oplus N}{\k})$ since otherwise either $\mathfrak{x}\in\soc(\gTateext{R}{M}{\k})$ or $\mathfrak{z}\in\soc(\gTateext{R}{N}{\k})$ by our previous argument. But $\deg(\mathfrak{x})=s\gneq s'=\deg(\mathfrak{x'})$ and $\deg(\mathfrak{z})=s\gneq s''=\deg(\mathfrak{z'})$, contradicting the assumption that both $s'$ and $s''$ are strictly less than $s$.
	
	For cocritical degree, there must exist a lowest degree nonzero element $(\mathfrak{\bar{x}},\mathfrak{\bar{z}})\in\cosoc(\gTateext{R}{M\oplus N}{\k})$ with $\mathfrak{x}\in \widehat{\Ext}^{t}(M,\k)$ and $\mathfrak{z}\in \widehat{\Ext}^{t}(N,\k)$ (at least one nonzero). That is, there exists some nonzero element of degree $t$ such that $(\mathfrak{x},\mathfrak{z})\not\in\x \widehat{\Ext}^{\leq t}(M\oplus N,\k)$. Suppose that $\cocrdeg{}{R}{\comp{C}}=t'\lneq t$ so that there exists some $0\neq\mathfrak{x'}\not\in\x \widehat{\Ext}^{\leq t'}(M,\k)$, implying $(\mathfrak{x'},0)\not\in\x \widehat{\Ext}^{\leq t'}(M\oplus N,\k)$ and contradicting the assumption that $t$ is the lowest degree of such an element. The same argument can be applied to $\cocrdeg{}{R}{\comp{D}}$, so we see that the cocritical degrees of $\comp{C}$ and $\comp{D}$ are bounded below by $t$. On the other hand, since there exists some nonzero element of degree $t$ such that $(\mathfrak{x},\mathfrak{z})\not\in\x \widehat{\Ext}^{\leq t}(M\oplus N,\k)$, either $0\neq\mathfrak{x}\not\in\x \widehat{\Ext}^{\leq t}(M,\k)$ or $0\neq\mathfrak{z}\not\in\x \widehat{\Ext}^{\leq t}(N,\k)$, demonstrating that we must have equality for at least one of the cocritical degrees.
\end{proof}

\begin{example}
	Let $R=\k[\![x_1,\dots,x_n]\!]/\f$ where $\f=(f_1, \dots, f_c)$ in $\m=(x_1, \dots, x_n)$. Furthermore, denote by $\comp{K}$ the (minimal) totally acyclic complex in the complete resolution of $M=\k$. By  \cite[\S 2]{Eisenbud}, $\crdeg{}{R}{\k}=-1$. Now denote by $\comp{C}=\compK\oplus\shiftq{5}{\comp{K}}$ and note that Propositions \ref{Prop:crco-shifts} and \ref{Prop:sums-crco} tell us that $\crdeg{}{R}{\comp{C}}=\max\{0,5\}=5$. Similarly, we expect $\cocrdeg{}{R}{\comp{C}}=\min\{-1,4\}=-1$.
\end{example}
The previous example demonstrates that the lowest degree Betti number may not occur at homological degree $0$, or even at the critical degree. Rather, it could occur \emph{at} the cocritical degree, and is always guaranteed to occur \emph{between} the \crco (inclusive). Hence, this motivates questions about the distance between the \crco of an $R$-complex.

\subsection{Critical Diameter}\label{width}
Although the authors of \cite{homalgCI-codim2} provide a sufficient bound for $\crdeg{}{R}{M}$ when $\cx_RM=2$, such bounds are unknown for greater complexity. Moreover, the bound given is dependent upon the Betti sequence of the given module. Example 7.5 from \cite{cid} demonstrates an obstacle when trying to find appropriate bounds for $\crdeg{}{R}{M}$ over a given ring or modules of the same complexity: taking a higher cosyzygy (or syzygy) module will always yield a higher (resp. lower) critical degree. We provide the following example to demonstrate the same issue, even when we transition to the definitions in $\Ktac{R}$.
\begin{example}
	Denote by $\comp{C}\in\Ktac{R}$ the minimal $R$-complex with $\Im\doug^{\comp{C}}_0=M$ and $\comp{D}\in\Ktac{R}$ the minimal $R$-complex with $\Im\doug^{\comp{D}}_0=N$, so that there exist complete resolutions $\cres{\comp{C}}{\res{F}}{M}$ and $\cres{\comp{D}}{\res{G}}{N}$. Now suppose $M$ and $N$ are syzygies of each other, say $N=\Omega^{-n}M$ and $M=\Omega^nN$. Then note that $\comp{D}\simeq\shiftq{n}{\comp{C}}$ (in fact, they are isomorphic!). If $\crdeg{}{R}{\comp{C}}=s$ and $\cocrdeg{}{R}{\comp{C}}=t$, then $\crdeg{}{R}{\comp{D}}=s+n$ and $\cocrdeg{}{R}{\comp{D}}=t+n$ by Proposition \ref{Prop:crco-shifts}. However, suppose we instead start with the assumption that $\crdeg{}{R}{\comp{D}}=s'$ and $\cocrdeg{}{R}{\comp{D}}=t'$, viewing $\comp{C}=\shiftq{-n}{\comp{D}}$. Once again applying Proposition \ref{Prop:crco-shifts}, it is not difficult to see that $\crdeg{}{R}{\comp{C}}=s'-n$ and $\cocrdeg{}{R}{\comp{C}}=t'-n$. Note that in doing so, $s'-n=s$ and $t'-n=t$. \hfill$\diamond$
\end{example}
In the example above, note that while the \crco change under translations, the \emph{difference} between these two degrees does not alter: $$\crdeg{}{R}{\comp{C}}-\cocrdeg{}{R}{\comp{C}}=s-t=(s+n)-(t+n)=\crdeg{}{R}{\comp{D}}-\cocrdeg{}{R}{\comp{D}}.$$

\begin{defn}\label{def: complex-diameter}
	Let $\comp{C}$ be a totally acyclic $R$-complex with minimal subcomplex $\mincomp{C}$ %. Furthermore assume $\CIdim_Q\Im\doug^{\mincomp{C}}_0<\infty$ 
	such that $\cx_R\mincomp{C}>1$. Then the $R$-\emph{diameter} of $\comp{C}$ is the distance between the critical and cocritical degrees of $\comp{C}$,
	$$\cdiam{R}{C}=\crdeg{}{R}{\comp{C}}-\cocrdeg{}{R}{\comp{C}}.$$
	Define $\cdiam{R}{C}=-\infty$ for any $\comp{C}$ with $\cx_Q\mincomp{C}=1$ or if $\comp{C}\simeq 0$.
\end{defn}
\begin{rmk}
	Note that since $R$ is a complete intersection, $\cdiam{R}{C}<\infty$ but, relaxing to a local ring $Q$ (not regular!), $\cdiam{Q}{C}=\infty$ if and only if $\crdeg{}{Q}{\comp{C}}=\infty$ or $\cocrdeg{}{Q}{\comp{C}}=-\infty$.
\end{rmk}
In the above definition, %we use the fact that finite CI dimension of $M$ implies $\CIdim_QM^*<\infty$, thus meaning $\crdeg{}{Q}{M}<\infty$ (see \cite[Theorem 7.2]{cid}) and $\crdeg{}{Q}{M^*}<\infty$. It also
it should be clear that 	$$\cdiam{R}{C}\leq\inf\{\crdeg{\mu}{R}{\comp{C}}-\cocrdeg{\mu}{R}{\comp{C}} \: : \:\mu\in\End_{\CK(R)}(\comp{C})\}$$ with equality whenever $\mu$ realizes both the critical and cocritical degrees. %Furthermore, if $\crdeg{\mu}{Q}{\comp{C}}=\crdeg{}{Q}{\comp{C}}$ \emph{and} $\cocrdeg{\mu}{Q}{\comp{C}}=\cocrdeg{}{Q}{\comp{C}}$ where $\deg(\mu)=q$, then note that whenever $\cx_Q\comp{C}>1$ it must hold that $-q<\cdiam{Q}{C}<\infty$ by Proposition \ref{Prop: cocr>cr}. That is, for a \emph{non}-periodic $Q$-complex the maximal amount for which the critical degree can be smaller than the cocritical degree is $q-1$ as long as the \crco are realized by the same endomorphism. However, if the \crco are realized by different endomorphisms, it is currently unknown whether there exists a lower bound on $\cdiam{Q}{C}$ for non-periodic, nonzero complexes. We now assert that the diameter is stable under homotopy, and thus well defined in $\Ktac{Q}$.
\begin{prop}
	If $\comp{C}\simeq\comp{D}$, then $\cdiam{R}{C}=\cdiam{R}{D}$.
\end{prop}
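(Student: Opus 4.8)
\begin{proof}[Proof proposal]
The plan is to reduce the statement to Proposition \ref{Prop:hom-stable} after first checking that the two complexes fall into the same case of Definition \ref{def: complex-diameter}. Recall from the preliminaries that homotopy equivalent complexes have isomorphic minimal subcomplexes; in particular $\mincomp{C} \cong \mincomp{D}$ in $\CR$. Since $\Tateext_R^*(-,\k)$ and hence complexity are determined by the minimal subcomplex, we get $\cx_R\mincomp{C} = \cx_R\mincomp{D}$, and likewise $\comp{C}\simeq 0$ if and only if $\comp{D}\simeq 0$. So either both of $\cdiam{R}{C}$ and $\cdiam{R}{D}$ are defined to be $-\infty$ (the case $\cx_R\mincomp{C}=1$ or $\comp{C}\simeq 0$), in which case the equality is immediate, or neither is and we are in the genuine case $\cx_R\mincomp{C}>1$.

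In the remaining case, apply Proposition \ref{Prop:hom-stable}, which gives $\crdeg{}{R}{\comp{C}}=\crdeg{}{R}{\comp{D}}$ and $\cocrdeg{}{R}{\comp{C}}=\cocrdeg{}{R}{\comp{D}}$. Subtracting,
\[
\cdiam{R}{C}=\crdeg{}{R}{\comp{C}}-\cocrdeg{}{R}{\comp{C}}=\crdeg{}{R}{\comp{D}}-\cocrdeg{}{R}{\comp{D}}=\cdiam{R}{D},
\]
as desired. (Because $R$ is a complete intersection, both terms on each side are finite by Propositions \ref{Prop:cr-cohom} and \ref{Prop:cocr-cohom}, so the subtraction is unambiguous.)
\end{proof}

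There is essentially no obstacle here: the content is entirely carried by Proposition \ref{Prop:hom-stable}, and the only point requiring a moment's care is verifying that the complexity/contractibility dichotomy built into Definition \ref{def: complex-diameter} is respected under homotopy equivalence, which follows since the minimal subcomplex is a homotopy invariant.
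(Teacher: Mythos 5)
Your proof is correct and takes the same route as the paper, which simply invokes Proposition \ref{Prop:hom-stable} and subtracts. Your additional remark that the complexity/contractibility dichotomy in Definition \ref{def: complex-diameter} is preserved under homotopy equivalence is a reasonable extra sanity check, but it is not a new idea beyond what the paper records.
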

\begin{proof}
	By Proposition \ref{Prop:hom-stable}, we have that $\crdeg{}{R}{\comp{C}}=\crdeg{}{R}{\comp{D}}$ and $\cocrdeg{}{R}{\comp{C}}=\cocrdeg{}{R}{\comp{D}}$. Thus, the statement follows directly from these observations.
\end{proof}

\begin{prop}
	For non-periodic complexes $\comp{C}$ and $\comp{D}$ in $\Ktac{R}$, it holds that: \begin{enumerate}[i.]
		\item $\cdiam{R}{C^*}=\cdiam{R}{C}-q$
		\item $\cdiam{R}{\Csum{C}{D}}=\max\{\crdeg{}{R}{\comp{C}},\crdeg{}{R}{\comp{D}}\}-\min\{\cocrdeg{}{R}{\comp{C}},\cocrdeg{}{R}{\comp{D}}\}$
	\end{enumerate} where $q=\deg(\mu)$ and $\mu$ the endomorphism realizing $\crdeg{}{Q}{\comp{C}}$.
\end{prop}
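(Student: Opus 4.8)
The plan is to derive both identities directly from the definition $\cdiam{R}{C}=\crdeg{}{R}{\comp{C}}-\cocrdeg{}{R}{\comp{C}}$ together with the two structural computations already established: Proposition \ref{Prop:dual-crco} (behaviour of the \crco under $R$-duality) for part (i), and Proposition \ref{Prop:sums-crco} (behaviour under direct sums) for part (ii). The first thing I would record is that the non-periodicity hypothesis is precisely what keeps us out of the degenerate case: over the complete intersection $R$, a non-periodic totally acyclic complex has $\cx_R\mincomp{C}>1$, and this property is inherited by $\dual{C}$ --- whose Betti sequence is that of $\comp{C}$ re-indexed by $n\mapsto -n$, since $C^*_n\cong C_{-n}$ --- and by $\Csum{C}{D}$, whose minimal subcomplex is $\Csum{\mincomp{C}}{\mincomp{D}}$ so that its complexity is $\max\{\cx_R\mincomp{C},\cx_R\mincomp{D}\}>1$. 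Thus in all three cases $\cdiam{R}{-}$ is the honest difference of two finite integers rather than the fiat value $-\infty$.

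For (i), let $\mu\!:\comp{C}\to\shiftq{q}{\comp{C}}$ realize $\crdeg{}{R}{\comp{C}}$, so that $q=\deg(\mu)$. Proposition \ref{Prop:dual-crco} gives $\crdeg{}{R}{\dual{C}}=-\cocrdeg{}{R}{\comp{C}}$ and $\cocrdeg{}{R}{\dual{C}}=q-\crdeg{}{R}{\comp{C}}$, so subtracting,
\[
\cdiam{R}{C^*}=\crdeg{}{R}{\dual{C}}-\cocrdeg{}{R}{\dual{C}}=\bigl(-\cocrdeg{}{R}{\comp{C}}\bigr)-\bigl(q-\crdeg{}{R}{\comp{C}}\bigr)=\cdiam{R}{C}-q.
\]
(By Corollary \ref{cor:CIop-crcodeg} one may even take $\mu$ to be a linear form in $\Ck$, whence $q=2$, but the computation does not use this.) For (ii), Proposition \ref{Prop:sums-crco} computes $\crdeg{}{R}{(\Csum{C}{D})}=\max\{\crdeg{}{R}{\comp{C}},\crdeg{}{R}{\comp{D}}\}$ and $\cocrdeg{}{R}{(\Csum{C}{D})}=\min\{\cocrdeg{}{R}{\comp{C}},\cocrdeg{}{R}{\comp{D}}\}$, and forming the difference of these two expressions is verbatim the right-hand side of (ii).

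I do not expect a genuine obstacle here: both parts are one-line algebraic consequences of Propositions \ref{Prop:dual-crco} and \ref{Prop:sums-crco} once the definition of $\cdiam{R}{-}$ is unwound. The only point that calls for care is the preliminary remark above --- confirming that passing to $\dual{C}$ or forming $\Csum{C}{D}$ does not push a complexity-$>1$ complex into the range where $\cdiam{R}{-}$ is defined to be $-\infty$. Once the Betti-number re-indexing for $(-)^*$ and the identity $\mincomp{\Csum{C}{D}}=\Csum{\mincomp{C}}{\mincomp{D}}$ are invoked, nothing further is required.
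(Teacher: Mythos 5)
Your argument is correct and follows the same route as the paper: both parts are obtained by unwinding $\cdiam{R}{C}=\crdeg{}{R}{\comp{C}}-\cocrdeg{}{R}{\comp{C}}$ and substituting directly from Propositions \ref{Prop:dual-crco} and \ref{Prop:sums-crco}; the paper's own proof simply says "This is a straightforward application of Propositions \ref{Prop:dual-crco} and \ref{Prop:sums-crco}." Your preliminary remark about non-periodicity forcing $\cx_R\mincomp{C}>1$ (so that neither $\dual{C}$ nor $\Csum{C}{D}$ falls into the $\cdiam{R}{-}=-\infty$ fiat case) is a sensible clarification that the paper leaves implicit.
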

\begin{proof}
	This is a straightforward application of Propositions \ref{Prop:dual-crco} and \ref{Prop:sums-crco}.
	\begin{comment}
		\begin{gather*}
		\cdiam{Q}{C^*}=\crdeg{}{Q}{\comp{C}^*}-\cocrdeg{}{Q}{\comp{C}^*} \\ =-\cocrdeg{}{Q}{\comp{C}}-(q-\crdeg{}{Q}{\comp{C}}) \\ =\cdiam{Q}{C}-q
		\end{gather*}
		where $q=\deg(\mu)$ and $\mu$ the endomorphism realizing $\crdeg{}{Q}{\comp{C}}$. Now we restrict to $R$ and, given $\cdiam{R}{C}$ and $\cdiam{R}{D}$, observe
		\begin{gather*}
		\cdiam{R}{\Csum{C}{D}}=\crdeg{}{R}{(\Csum{C}{D})}-\cocrdeg{}{R}{(\Csum{C}{D})} \\ =\max\{\crdeg{}{R}{\comp{C}},\crdeg{}{R}{\comp{D}}\}-\min\{\cocrdeg{}{R}{\comp{C}},\cocrdeg{}{R}{\comp{D}}\} \\ =\max\{\cdiam{R}{\comp{C}},\: \cdiam{R}{D},\: \cdiam{R}{C}+(t_{\comp{C}}-t_{\comp{D}}),\: \cdiam{R}{D}+ (t_{\comp{D}}-t_{\comp{C}})\}
		\end{gather*}
		where $t_{\comp{C}}=\cocrdeg{}{R}{\comp{C}}$ and $t_{\comp{D}}=\cocrdeg{}{R}{\comp{D}}$.
	\end{comment}
\end{proof}
%\Rebe{Change $\mdiam{R}{M}$ to just be $\cdiam{R}{C}$?}
\begin{defn}
	Let $M$ be a finitely-generated $R$-module with $\cx_RM>1$. Then the \emph{critical diameter} of $M$ is defined to be $$\mdiam{R}{M}=\cdiam{R}{C}$$ where $\comp{C}\to\res{F}\to{M}$ is the minimal complete resolution of $M$. %$$\mdiam{R}{M}=\cdiam{R}{C}+g+g^*$$ where $g=\dim R -\depth_RM$ and $g^*=\dim R -\depth_RM^*$.
	Furthermore, define $\mdiam{R}{M}=-\infty$ for any $R$-module with $\cx_RM=1$ and set $\mdiam{R}{M}=0$ if $\pd_RM<\infty$.
\end{defn}
%\begin{rmk}
%	Note that when $M$ and $M^*$ are MCM, $\mdiam{R}{M}$ coincides with $\cdiam{R}{\comp{C}}$.
%\end{rmk}

\begin{example}
		Let $Q=\k[\![x,y]\!]$ and take $R=Q/(x^2,y^2)$ so $\codim(Q,R)=2$ and we may expect two CI operators. Now, suppose $M=\k=\coker(f)$ where $f=(x, y)$ from $R^2\to R$ and take a minimal complete resolution $\comp{K}\to\res{F}\to\k$. Then the CI (Eisenbud) operators on $\comp{K}$ form degree $-2$ maps on $\comp{K}$ as follows as follows: \tiny	
 	%	$$\xymatrix @R=.05cm @C=.35cm{ \cdots \ar[r] & R^{5}\ar[r] & R^{4}\ar[r] & R^{3}\ar[r] & R^{2}\ar[r] & R\ar[r] & R\ar[r] &R^{2}\ar[r] &R^3\ar[r] & R^4\ar[r] & \cdots \\  & ( 4) & ( 3) & ( 2) & ( 1) & ( 0) & (-1) & (-2) & (-3) & (-4) & 	}$$
 %	\tiny
 %		$$\xymatrix @R=.05cm @C=1.25cm{ R^{4}\ar[r]^{\begin{psmallmatrix} y & -x & 0 & 0 \\ 0 & y & x & 0 \\ 0 & 0 & y & x \end{psmallmatrix}} & R^{3}\ar[r]^{\begin{psmallmatrix} 0 & -y & x \\ y & x & 0 \end{psmallmatrix}} & R^{2}\ar[r]^{\begin{psmallmatrix}	x & y \end{psmallmatrix}} & R\ar[r]^{\begin{psmallmatrix}	xy	\end{psmallmatrix}} & R\ar[r]^{\begin{psmallmatrix}x \\ y \end{psmallmatrix}} &R^{2}\ar[r]^{\begin{psmallmatrix} 0 & y \\ -y & x \\ x & 0 \end{psmallmatrix}} &R^3 \\ ( 3) & ( 2) & ( 1) & ( 0) & (-1) & (-2) & (-3) }$$
 			
 	$$\xymatrix @R=.5cm @C=1.2cm{
 		R^{4}\ar[r]^{\begin{psmallmatrix} y & x & 0 & 0 \\ 0 & -y & x & 0 \\ 0 & 0 & y & x \end{psmallmatrix}} \ar[d]_{t_3} & R^{3}\ar[r]^{\begin{psmallmatrix} y & x & 0 \\ 0 & -y & x \end{psmallmatrix}} \ar[d]_{t_2} & R^{2}\ar[r]^{\begin{psmallmatrix}
 			x & y \end{psmallmatrix}} \ar[d]_{t_1}  & R\ar[r]^{\begin{psmallmatrix}
 			xy	\end{psmallmatrix}} \ar[d]_{t_0} & R\ar[r]^{\begin{psmallmatrix}
 			x \\ y \end{psmallmatrix}} \ar[d]_{t_{-1}}  &R^{2}\ar[r]^{\begin{psmallmatrix} y & 0 \\ -x & y \\ 0 & x \end{psmallmatrix}} \ar[d]_{t_{-2}}  & R^3\ar[r]^{\begin{psmallmatrix} y & 0 & 0 \\ x & y & 0 \\ 0 & -x & y \\ 0 & 0 & x \end{psmallmatrix}} \ar[d]_{t_{-3}}  & R^4 \ar[d]_{t_{-3}} 
 			\\ R^{2}\ar[r] & R\ar[r] & R\ar[r] & R^2 \ar[r] & R^3 \ar[r] &R^{4}\ar[r] &R^5 \ar[r] & R^6 \\}$$
 	 	\vspace{-4mm}$$\xymatrix @R=.05cm @C=0.95cm{\color{gray}\text{Deg: } (1) &\color{gray} \hspace{-2mm}(0) & \hspace{1mm}\color{gray}(-1) & \hspace{1mm} \color{gray}(-2) & \color{gray}(-3) &\color{gray} (-4) & \color{gray}(-5) & \color{gray}(-6)  }$$
 	 	
	\normalsize where \begin{itemize}%\compresslist
		\item 	$t_{x,3} = \begin{psmallmatrix} 0 & 0 & 1 & 0 \\ 0 & 0 & 0 & 1 \end{psmallmatrix}$ and $t_{y,3} = \begin{psmallmatrix} 1 & 0 & 0 & 0 \\ 0 & 1 & 0 & 0 \end{psmallmatrix} $,\vspace{2mm}
		
		\item $t_{x,2} = \begin{psmallmatrix} 0 & 0 & 1 \end{psmallmatrix} $ and	$t_{y,2} = \begin{psmallmatrix} 1 & 0 & 0 \end{psmallmatrix} $,\vspace{2mm}
		
		\item $t_{x,1} = \begin{psmallmatrix} 0 & y \end{psmallmatrix}$ and $t_{y,1} = \begin{psmallmatrix} x & 0 \end{psmallmatrix}$,\vspace{2mm}
		
		\item $t_{x,0} = \begin{psmallmatrix}	0 \\ y \end{psmallmatrix}$ and $t_{y,0} = \begin{psmallmatrix}	x \\ 0 \end{psmallmatrix}$,\vspace{2mm}
		
		\item $t_{x,-1} =\begin{psmallmatrix}	0 \\ 0 \\ 1	\end{psmallmatrix}$ and $t_{y,-1} =\begin{psmallmatrix}	1 \\ 0 \\ 0	\end{psmallmatrix}$,\vspace{2mm}
		
		\item $t_{x,-2}= \begin{psmallmatrix} 0 & 0 \\ 0 &0 \\ 1 & 0 \\ 0 & 1	\end{psmallmatrix}$ and $t_{y,-2}= \begin{psmallmatrix} 1 & 0 \\ 0 &1 \\ 0 & 0 \\ 0 & 0	\end{psmallmatrix}$,\vspace{2mm}
		
		\item $t_{x,-3}= \begin{psmallmatrix} 0 & 0 & 0\\ 0 &0 & 0\\ 1 & 0 &0 \\ 0 & 1 & 0 \\ 0 &0 & 1	\end{psmallmatrix}$ and $t_{y,-3}= \begin{psmallmatrix} 1 & 0 &0\\ 0 &1 &0\\ 0 & 0 & 1 \\ 0 & 0 & 0 \\0 & 0& 0	\end{psmallmatrix}$,\vspace{2mm}
		
		\item $t_{x,-4}= \begin{psmallmatrix} 0 & 0 & 0 & 0 \\0 & 0 &0 & 0\\ 1 & 0 &0 & 0\\ 0 & 1 & 0 & 0\\ 0 &0 & 1 & 0 \\ 0 & 0 & 0 & 1	\end{psmallmatrix}$ and $t_{y,-4}= \begin{psmallmatrix} 1 & 0 &0 & 0\\ 0 &1 &0 & 0\\ 0 & 0 & 1 & 0\\ 0 & 0 & 0 & 1 \\ 0 & 0& 0 & 0 \\ 0 & 0& 0 & 0	\end{psmallmatrix}$
	\end{itemize}
	As Eisenbud demonstrated in \cite[\S2]{Eisenbud}, each $t_{i+2}$ is surjective for $i\geq 0$ and thus $\crdeg{}{R}{C}=-1$ in this case. Interestingly, $\cocrdeg{}{R}{C}=-1$ as well since we see that the earliest injective map on the right is $t_{-1}$. This indicates $\cdiam{R}{K}=-1-(-1)=0=\mdiam{R}{\k}$. In general, we conjecture that the residue field over any complete intersection ring will have a critical diameter of $0$.
\end{example}

\subsection{Boundedness Problems on Diameter}
Note first that while $\crdeg{}{R}{M}\neq\crdeg{}{R}{\Syz{n}{M}}$ for any $n\in\Z^{+}\cup\Z^{-}$, it should be clear that $\mdiam{R}{M}=\mdiam{R}{\Syz{n}{M}}$. However, given two $R$-modules $N$ and $M$ with $2\leq\cx_RM=\cx_RN$, does there exist a common upper bound for $\mdiam{R}{M}$ and $\mdiam{R}{N}$? We now shift perspective to an $R$-complex to partly address this question.
\begin{example}
	Let $M$ be a finitely-generated MCM $R$-module with minimal complete resolution $\cres{C}{F}{M}$ and suppose $\cx_RM>1$. Denote by $\mdiam{R}{M}=\omega_M=\cdiam{R}{C}$. Consider the $R$-complex $\comp{C}\oplus\shiftq{n}{\comp{C}}$ which is associated to the (minimal) complete resolution $\comp{C}\oplus\shiftq{n}{\comp{C}}\to\res{F}\oplus\shiftq{n}{\res{F}}\twoheadrightarrow M\oplus\Omega^nM$ for some fixed integer $n\in\Z$. Hence, $\cx_R(M\oplus\Omega^nM)=\cx_RM$ and yet $\omega_{M\oplus\Omega^nM}<\omega_{M\oplus\Omega^{n+1}M}$ for any such integer $n\in\Z$.
\end{example}
This demonstrates that the diameter is not necessarily bounded for all modules (or complexes) of a given complexity (greater than one). We leave the reader with the same question for an \emph{indecomposable} MCM module (complex).
\begin{OProb}
	Let $R$ be a complete intersection ring and denote by $M$ a finitely-generated MCM $R$-module with minimal complete resolution $\cres{C}{F}{M}$ \emph{such that $\comp{C}$ is indecomposable}. Further suppose that $\cx_RM>1$ and denote by $\mdiam{R}{M}=\omega_M=\cdiam{R}{C}$. Then does there exist some $d\in\N$ such that $\omega_M\leq d$ for all such finitely-generated MCM $R$-modules $M$ with indecomposable complete resolution and the same complexity (greater than one)? 
\end{OProb}

\subsection*{Acknowledgments} I would like to sincerely thank my advisor David Jorgensen, who provided direction for this work in addition to having numerous discussions about it. I would also like to thank Tyler Anway for many, \emph{many} conversations and comments throughout the writing process.

%\newpage
%REFERENCES------------------------------------------------------------------------------

\bibliographystyle{acm}
\bibliography{CrCoDeg}
\nocite{*}

%------------------------------------------------------------------------------

\end{document}